\documentclass[11pt]{article}

\usepackage[margin=0.97in]{geometry}
\usepackage{amsthm}
\usepackage{amsmath}
\usepackage{amssymb}
\usepackage{url}
\usepackage{paralist}
\usepackage[usenames]{color}
\usepackage{enumerate}

\usepackage{algorithm}
\usepackage{algpseudocode}
\usepackage[normalem]{ulem}

\usepackage{verbatim} 

\renewcommand{\phi}{\varphi}

\newcommand{\hide}[1]{ }

\renewcommand{\phi}{\varphi}

\DeclareMathOperator{\argmin}{argmin}

\newcommand{\eps}{\epsilon}

\DeclareMathOperator{\poly}{poly}

\newcommand{\Dist}{\Delta}
\newcommand{\SymDif}{\ominus}



\theoremstyle{plain}
\newtheorem{theorem}{Theorem}[section]

\newtheorem{lemma}{Lemma}[section]
\newtheorem{claim}[lemma]{Claim}
\newtheorem{fact}[lemma]{Fact}
\newtheorem{observation}[theorem]{Observation}

\newtheorem{corollary}[theorem]{Corollary}

\newtheorem{definition}{Definition}

\usepackage[normalem]{ulem}

\renewcommand{\include}{\input}

\newcommand{\dchange}[1]{{\color{blue}  #1}}
\newcommand{\nchange}[1]{{\color{green}  #1}}
\newcommand{\dcom}[1]{{\color{magenta}{[D: #1]}}}
\newcommand{\ncom}[1]{{\color{magenta}{[N: #1]}}}
\newcommand{\TODO}[1]{ {\color{red} TODO: #1 }}

\newcommand{\prevv}[1]{}

\newcommand{\calA}{\mathcal{A}}
\newcommand{\calB}{\mathcal{B}}
\newcommand{\calC}{\mathcal{C}}
\newcommand{\calF}{\mathcal{F}}
\newcommand{\calH}{\mathcal{H}}
\newcommand{\calJ}{\mathcal{J}}
\newcommand{\calK}{\mathcal{K}}
\newcommand{\calP}{\mathcal{P}}
\newcommand{\calS}{\mathcal{S}}
\newcommand{\calQ}{\mathcal{Q}}

\newcommand{\calV}{\mathcal{V}}

\newcommand{\calHPP}{\mathcal{HPP}}
\newcommand{\tcalHPP}{\mathcal{SHPP}}
\newcommand{\GPP}{\mathcal{GPP}}
\newcommand{\GPPWR}{\mathcal{GPP}_{\rm WR}}

\newcommand{\barC}{\bar{C}}

\newcommand{\hDelta}{{\widehat{\Delta}}}

\newcommand{\wtC}{{\widetilde{C}}}
\newcommand{\wtT}{{\widetilde{T}}}
\newcommand{\wtG}{{\widetilde{G}}}
\newcommand{\wtQ}{{\widetilde{Q}}}

\newcommand{\Phieps}{\Phi(\eps)}

\newcommand{\Phik}{\Phi^{k}}
\newcommand{\tPhik}{\widetilde{\Phi}^{k}}

\newcommand{\anc}{{a}}

\newcommand{\eqdef}{\stackrel{\rm def}{=}}

\renewcommand{\part}[2]{{#1}^{#2}}

\newcommand{\SUB}[1]{S^{ub}_{#1}} 
\newcommand{\SLB}[1]{S^{lb}_{#1}} 
\newcommand{\DUB}[1]{D^{ub}_{#1}} 
\newcommand{\DLB}[1]{D^{lb}_{#1}} 
\newcommand{\SP}[1]{\bar{S}_{#1}} 
\newcommand{\DP}[2]{\bar{D}_{#1, #2}} 
\newcommand{\TPD}[2]{\mathcal{TPD}^k_{#2}(#1)} 
\newcommand{\RPDist}[4]{L_1(\DP{#1}{#3}, \DP{#2}{#4})} 
\newcommand{\PDDist}[2]{L_1(\DLB{#1}, \DLB{#2})} 
\newcommand{\Lone}[2]{L_1(#1,#2)} 
\newcommand{\Sd}{\mathcal{D}} 
\definecolor{ourgray}{gray}{0.6}
\newcommand{\maxX}{X}

\newcommand{\calGPP}{\mathcal{GPP}}

\newcommand{\ind}{{\rm ind}}

\def\fullver{1}

\begin{document}

\begin{titlepage}
\title{On Efficient Distance Approximation for Graph Properties}
\author{
    Nimrod Fiat \\
    Tel Aviv University \\
         {\tt nimrod.fiat@gmail.com}
\and
 Dana Ron\thanks{Supported by the Israel Science Foundation (grant number~1146/18) and the Kadar-family award.} \\
 Tel-Aviv University,\\
 {\tt danaron@tau.ac.il}
 }
	\maketitle

	\begin{abstract}
		
		A \emph{distance-approximation\/} algorithm for a graph property $\calP$ in the adjacency-matrix model is given an approximation parameter $\eps \in (0,1)$ and query access to the adjacency matrix of a graph $G=(V,E)$. It is required to output an estimate of the \emph{distance} between $G$ and the closest graph $G'=(V,E')$ that satisfies $\calP$, where the distance between graphs is the size of the symmetric difference between their edge sets, normalized by $|V|^2$.

		In this work we
introduce \emph{property covers}, as a framework for using
distance-approximation algorithms for ``simple'' properties to design distance-approximation 
algorithms for more ``complex'' properties. 
Applying this framework we present distance-approximation algorithms with
$\poly(1/\eps)$ query complexity for
induced $P_3$-freeness, induced $P_4$-freeness, and Chordality. For induced $C_4$-freeness our algorithm has query complexity $\exp(\poly(1/\eps))$. These complexities essentially match the corresponding known results for testing these properties and provide an exponential improvement on previously known results.
	\end{abstract} 

\thispagestyle{empty}

\end{titlepage}

\ifnum\fullver=1	
	\tableofcontents
\setcounter{page}{1}
	\newpage
\fi

\section{Introduction}\label{sec:intro}


A \emph{property} is simply a set of objects (e.g., graphs). A
\emph{distance-approximation algorithm} for a property $\calP$ and a prespecified distance measure is an algorithm that approximates the distance between a given object and the closest object in $\calP$.
A related decision task, known as {\em property testing\/}, is to distinguish between objects that belong to $\calP$ and objects that are relatively far from $\calP$ (i.e., are far from any object in $\calP$).
For both tasks, algorithms are given query access to the input object and are allowed a small failure probability. The goal is to design algorithms that perform as few queries as possible.
While there are contexts in which knowing whether an object has a certain property or is far from having it is sufficient for our needs, in others it is actually important to have a good estimate of the distance.
Hence, we are interested in studying the stronger notion of distance approximation (which is typically more challenging than property testing).\footnote{Note that if an object is close to having a property, then there are no requirements from the testing algorithm. Hence, we cannot use a testing algorithm as a black box for distance approximation.}


The objects we consider are graphs. We work in the {\em adjacency-matrix model\/}~\cite{GGR} (also known as the {\em dense-graph model\/}), in which the algorithm can query the adjacency matrix of the tested graph $G = (V,E)$. That is, for any pair of vertices $u,v \in V$, it can determine  whether or not $(u,v) \in E$.
In this model,
the distance between a graph $G$ and a property $\calP$, denoted by
$\Delta(G,\calP)$, is the minimum number of edges that should be added to/removed  from $G$ so as to obtain a graph in $\calP$, normalized by $|V|^2$ (the size of the adjacency matrix).
Given an approximation parameter $\eps$, a distance-approximation algorithm for property $\calP$ is required to output an estimate $\hDelta$ such that $|\hDelta-\Delta(G,\calP)| \leq \eps$ with probability at least $2/3$. A property testing algorithm for $\calP$ is required to distinguish with probability at least $2/3$ between the case that $G\in \calP$ and the case that
$\Delta(G,\calP)>\eps$.

\paragraph{General results for testability and approximability.}
\sloppy
Building on the Regularity Lemma of Szeme\'redi~\cite{Sz}, Alon et al.~\cite{AFNS}
gave a characterization of all graph properties that can be tested with query complexity that is only a function of $\eps$ (and has no dependence on $n = |V|$). Such properties are 
often referred to as \emph{testable}.
Independently, Borgs et al.~\cite{BCLSSV} obtained
an analytic characterization of 
testable properties through the theory
of graph limits.

Turning to distance-approximation, Fischer and Newman~\cite{FN} showed
that every testable property
has a distance-approximation algorithm whose query complexity is at most
``Wowzer'' (a composition of Tower functions) 
of $\poly(1/\eps)$.\footnote{A related result regarding distance approximation (based on graph limits) appears in~\cite{BCLSV}, but does not give explicit bounds on the query complexity.}
Alon, Shapira and Sudakov~\cite{ASS} improve on this result for monotone properties (properties
inherited by subgraphs), giving a distance-approximation algorithm for monotone properties with query complexity that is at most a tower of height $\poly(1/\eps)$.
The result of Hoppen et al.~\cite{HKLLS16}
combined with the result of Fox~\cite{Fox-removal}, 
implies that
a tower of height $\poly(\log(1/\eps))$ suffices (for monotone properties).
The follow-up work of Hoppen et al.~\cite{HKLLS17} combined with the result of Conlon and Fox~\cite{CF-removal}
implies that hereditary properties (properties
inherited by induced subgraphs)
have distance-approximation algorithms with query complexity that is at most
a tower of height $\poly(1/\eps)$.
In 
some cases, which we discuss below, the result of~\cite{HKLLS17} 
implies significantly more efficient distance-approximation algorithms, though never better than $\exp(\poly(1/\eps))$.

While these results are 
general,
in many cases they are far from optimal.
A natural question that arises (and is explicitly stated in the aforementioned papers) is for which properties are there testing/distance-approximation  algorithms whose query complexity is significantly smaller, and in particular, polynomial in $1/\eps$.

In what follows we shortly survey the known results relating to the above question. While there are quite a few known results for testing graph properties with $\poly(1/\eps)$ query complexity, 
relatively
little is known for distance-approximation algorithms. Indeed, in this work we set out to remedy this situation, by (almost) closing the ``knowledge gap'' between
$\poly(1/\eps)$-query testing and distance approximation.
Our work and the works discussed above
leave as an open question
whether the complexity of distance approximation in the adjacency-matrix model is always polynomially related to the complexity of testing.


\paragraph{Results for \boldmath{$\poly(1/\eps)$}-testability.}
\sloppy
The first $\poly(1/\eps)$-testable properties were presented  in~\cite{GGR}. These include Bipartiteness, and more generally, $k$-colorability, $\rho$-clique and $\rho$-cut (having a clique of size $\rho n$, and having a cut of size $\rho n^2$, respectively). Furthermore, Goldreich et al.~\cite{GGR} defined a family of
\emph{General Partition Properties}
(which the aforementioned properties belong to),
and proved that every property in this family is $\poly(1/\eps)$-testable.\footnote{For each  property in this family, a graph has the property if its vertex set can be partitioned into $k$ parts in a manner satisfying certain constraints on the number of edges within parts and between parts -- A formal definition 
	appears in 
	Appendix~\ref{app:gpp}.}
An extension of this result, which covers some additional partition properties, appears in~\cite{NR18}.

Another type of graph properties that have been studied in the context of property testing are those defined by being subgraph-free of a small fixed graph.
Alon~\cite{Alon-iff-bipartite} proved that (non-induced) $H$-freeness (for a constant-size graph $H$) is $\poly(1/\eps)$-testable if and only if $H$ is bipartite.\footnote{The super-polynomial lower bound for non-bipartite $H$ was proved in~\cite{Alon-iff-bipartite} for one-sided error algorithms, and this result was extended to two-sided error algorithms in~\cite{AS-direct}. }
For induced $H$-freeness, Alon and Shapira proved that for any graph $H$ except $P_2$, $P_3$, $P_4$, $C_4$ (and their complements),\footnote{For an integer $\ell$, $P_\ell$ is the path over $\ell$ vertices, and $C_\ell$ is the cycle over $\ell$ vertices. It is also common to use $P_\ell$ to denote the path with $\ell$ \emph{edges}.} induced $H$-freeness is {\em not\/} $\poly(1/\eps)$ testable. On the positive side, in addition to induced $P_2$-freeness (a single edge), which is clearly testable with query complexity $O(1/\eps)$, both induced $P_3$-freeness and induced $P_4$-freeness are $\poly(1/\eps)$-testable (\cite{AS-easily-induced} and~\cite{AF15}, respectively).

Gishboliner and Shapira~\cite{GS17} show 
$\poly(1/\eps)$-testability for a family of graph properties that includes both induced $P_4$ freeness and
induced $P_3$ freeness.\footnote{A full description of the family is
	somewhat involved, and hence we do not elaborate on it here. We note that it  captures induced freeness from any finite family of graphs that includes a split graph, a bipartite graph and a co-bipartite graph.}
Recently,
Gishboliner and Shapira~\cite{GS19} proved that induced $C_4$-freeness is testable with query complexity $\exp(\poly(1/\eps))$. While this still leaves open the question of $\poly(1/\eps)$-testability of induced $C_4$-freeness, it is a 
significant improvement over the best previously known upper bound (of Tower complexity). They also showed chordality (a subcase of $C_4$-freeness) is testable with query complexity $\exp(\poly(1/\eps))$.
This was subsequently improved by De~Joannis~de~Verclos~\cite{verclos2019chordal}, who showed that chordality is testable with query complexity $\poly(1/\eps)$.

\paragraph{Distance approximation.}
While the focus of~\cite{GGR} was on property testing, they also gave one distance-approximation algorithm.
The algorithm
approximates the size of a maximum $k$-cut (the maximum number of edges crossing a $k$-cut), and hence implies a distance-approximation algorithm for $\rho$-$k$-cut
(and therefore for $k$-colorability as well). Its query complexity is polynomial in $1/\eps$.

The result of~\cite{HKLLS17}, combined with the analysis regarding testability discussed above, 
implies distance-approximation algorithms with query complexity 
$\exp(\poly(1/\eps))$ for induced $P_3$-freeness and induced $P_4$-freeness, and double exponential for chordality and $C_4$-freeness.


We 
observe that distance approximation to (non-induced) $H$-freeness for bipartite graphs $H$ can be easily performed using $O(1/\eps^2)$ queries by simply estimating the number of edges in the graph. This is the case because for every bipartite graph $H$ over $t$ vertices, every graph with $n^{2-\Omega(\frac{1}{t})}$ edges contains $H$ as a subgraph~\cite{Alon-iff-bipartite}.
Also note that all query complexity lower bounds  for  testing (such as the one for triangle freeness~\cite{Alon-iff-bipartite,AS-direct}), are also lower bounds for distance approximation.

\subsection{Our results}\label{subsec:our-res}


In this work
we describe a framework for designing distance-approximation algorithms, and use it to
obtain efficient distance-approximation algorithms for 
properties that have known efficient property testing algorithms (surveyed above).
The only known property-testing result
with $\poly(1/\eps)$ query complexity
for which we do not provide a corresponding distance-approximation result is the one
presented in~\cite{GS17}.
We also present a distance-approximation algorithm
for $C_4$-freeness with $\exp(\poly(1/\eps))$ query complexity
(improving on the previously best known result of double-exponential complexity~\cite{HKLLS17} and matching the complexity of the testing result for this property~\cite{GS19}).


\subsubsection{A general framework}
\label{subsubsec:intro-framework}

Our distance-approximation algorithms are derived using a common framework that we introduce.
We are hopeful that this framework can be applied to derive additional new results.
In fact, as we discuss in more detail in Section~\ref{subsec:disc}, in retrospect,
the use of ``covers'' (see Definition~\ref{def:cover} stated next) is implicit in some of the previous works
(though the covers used, and the way they were used, implied complexity at least $\exp(\poly(1/\eps))$).

\begin{definition}\label{def:cover}
	Let $\calP$ be a graph property,  $\calF$  a family of graph properties, and  $\eps>0$.
	We say that $\calF$ is an $\eps${\sf-cover} for $\calP$
	if the following conditions holds:
	\begin{enumerate}
		\item\label{it:G-in-P} For each $G \in \calF$, there exists $\calP'\in \calF$ such that $\Delta(G, \calP') \leq \eps/2$.
		\item\label{it:Gprime-in-union} For each $G' \in \underset{\calP'\in\calF}{\bigcup} \{\calP'\}$, $\Delta(G', \calP) \leq \eps/2$.
	\end{enumerate}
\end{definition}

The high-level idea for the framework is the following.
Let $\calP$ be a graph property for which we would like to design a distance-approximation algorithm.
In order to apply the framework, we show how, for any given $\eps>0$ we can find an $\eps$-cover for $\calP$ by a family, $\calF(\eps)$, 
of
properties, which we refer to as \emph{basic} properties. These properties are basic in the sense that we have
efficient distance-approximation algorithms for them.  Moreover, these algorithms are non-adaptive and $|\calF(\eps)|$ is not too large.
This allows us to use the same queries to
obtain an estimate of $\Delta(G,\calP')$ for all $\calP'\in \calF(\eps)$.
It follows from Definition~\ref{def:cover} (and we prove this formally in Theorem~\ref{thm:cover}), that if we take
the minimum estimate obtained (over all $\calP'\in \calF(\eps)$), then we get a good approximation for $\Delta(G,\calP)$.


Hence,
in order to apply the framework to a graph property $\calP$ 
we must find a suitable cover consisting of properties that have an efficient distance-approximation algorithm.
Recall that in the context of testing, there are two types of properties for which there are known (non-trivial) testing algorithms with query complexity $\poly(1/\eps)$.
The first are partition properties (as defined in~\cite{GGR} (and extended in~\cite{NR18})) and the second are those defined by forbidden induced subgraphs.
For the former, the cover for each property is a  ``natural'' one, consisting of a subset of all partition properties, 
(details are given in Appendix~\ref{app:gpp}).
On the other hand, for all the latter
the covers are perhaps  more surprising, as they are seemingly unrelated to subgraph freeness.  Rather, they are subfamilies of a family of partition properties (introduced in~\cite{NR18}), which we refer to as  \emph{semi-homogeneous partition properties}.

In the next subsection we define this family, and in the following ones we shortly discuss each of our specific distance-approximation results. Our emphasis is on the way we construct a cover for each property, and the results for induced subgraph freeness for $P_3,P_4,C_4$ and Chordality are presented from simplest to more complex.

\subsubsection{Semi-homogeneous partition properties}\label{subsubsec:intro-SHPP}
Each Semi-homogeneous partition property
is defined by an integer $k$ and a symmetric function $\phi: [k]\times [k] \to \{0,1,\bot\}$, called the {\em partition function\/} (where $[k] \eqdef \{1,\dots,k\}$).
A graph $G = (V,E)$ has the corresponding property $\calP_\phi$ if its vertex set $V$ can be partitioned into $k$ parts $(V_1,\dots,V_k)$ such that the edge densities within and between parts are as indicated by $\phi$ (where $\bot$ stands for ``don't care''). To be precise,
For every $i,j \in [k]$,
if $\phi(i,j) = 1$,
then $G$ contains all edges with one endpoint in $V_i$ and one endpoint in $V_j$ (excluding self-loops in the case of $i=j$), and if $\phi(i,j)=0$, then there are no such edges. We say in such a case that the partition $(V_1,\dots,V_k)$
is a \emph{witness} partition for $\phi$.

We denote the family of semi-homogeneous partition properties (for a given number of parts $k$) by  $\tcalHPP^k$.
For example, $k$-colorability is in $\tcalHPP^k$.
The next lemma can be shown to follow from~\cite{AE02} (see Appendix~\ref{app:shpp-csp}).
\begin{lemma}\label{lem:shpp-are-ee}
	There exists an algorithm that, given $k$, $\phi: [k]\times [k] \to \{0,1,\bot\}$, $\eps,\delta >0$ and query access to a graph $G$,
	takes a sample of
	$\poly(1/\eps,\log k,\log(1/\delta))$
	vertices, selected uniformly, independently at random,
	queries the subgraph induced by the sample,
	and outputs an estimate $\hDelta$ such that with probability at least $1-\delta$ satisfies
	$|\hDelta - \Delta(G,\calP_\phi)| \leq \eps$.
\end{lemma}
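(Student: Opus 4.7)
The plan is to recast distance approximation for $\calP_\phi$ as the problem of estimating the optimum of a dense binary constraint satisfaction problem (CSP) over alphabet $[k]$, and then invoke the sampling result of Andersson and Engebretsen~\cite{AE02}.

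First, I would associate to each vertex $v \in V$ a variable $x_v \in [k]$, interpreted as ``the index of the part of the candidate witness partition to which $v$ is assigned.'' For every unordered pair $\{u,v\}$ of distinct vertices, I define a binary constraint $c_{u,v}$ declared to be satisfied by the assignment $(x_u,x_v) = (i,j)$ precisely when either $\phi(i,j) = \bot$, or $\phi(i,j) = 1$ and $\{u,v\} \in E$, or $\phi(i,j) = 0$ and $\{u,v\} \notin E$. An assignment $x : V \to [k]$ encodes the partition $(V_1, \ldots, V_k)$ with $V_i = x^{-1}(i)$, and by the definition of $\calP_\phi$, the constraint $c_{u,v}$ is violated by $x$ exactly when $\{u,v\}$ must be flipped (added or removed) in order to make $(V_1, \ldots, V_k)$ a witness partition. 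Consequently, $|V|^2 \cdot \Delta(G, \calP_\phi)$ equals the minimum number of unsatisfied constraints over all assignments $x$, up to an $O(|V|)$ slack from the excluded self-loops that is absorbed into the additive error.

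Second, this minimization problem is exactly a \emph{dense} Min-$2$-CSP instance over alphabet $[k]$ with $\Theta(|V|^2)$ constraints, whose payoff tables are determined solely by $\phi$ and the adjacency-matrix entries. The result of~\cite{AE02} shows that for this class of instances a uniform vertex sample $S \subseteq V$ of size $s = \poly(1/\eps, \log k, \log(1/\delta))$, together with the queries of all induced adjacency entries on $S$, suffices to produce an estimate within additive $\eps \cdot |V|^2$ of the optimum with probability at least $1-\delta$. Dividing by $|V|^2$ yields the desired estimate of $\Delta(G, \calP_\phi)$. For graphs with $|V|<s$ the algorithm may trivially query the whole adjacency matrix and compute $\Delta(G,\calP_\phi)$ exactly, so sampling is only needed in the large-$n$ regime.

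The main obstacle, and the sole nontrivial step, is verifying that the CSP encoding above matches the precise hypotheses of~\cite{AE02} (dense Min-CSP of arity two over a finite alphabet) and that their analysis indeed yields a sample complexity with the claimed $\log k$ dependence on the alphabet size rather than a polynomial one. A careful, parameter-tracking derivation of this reduction is what is deferred to Appendix~\ref{app:shpp-csp}. The non-adaptivity and uniform-vertex-sample structure required by the lemma are inherited directly from the~\cite{AE02} algorithm.
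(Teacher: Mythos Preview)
Your proposal is correct and follows essentially the same approach as the paper: both reduce to a binary CSP over alphabet $[k]$ with one constraint per vertex pair (determined by $\phi$ and the adjacency entry), then invoke~\cite{AE02}. The only cosmetic difference is that the paper phrases it as a \emph{Max}-CSP (maximizing the number of \emph{non}-violating pairs and then complementing), which lets them plug directly into the stated Max-$\calF$ theorem of~\cite{AE02} with $|\calF|=2$, $\Sigma(\calF)=1$, $|D|=k$; this is exactly what yields the $\log k$ dependence you correctly flag as the crucial parameter to verify.
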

We note that the fact that the dependence on the number of parts, $k$, is only polylogarithmic, is crucial for some of our applications.

\subsubsection{Induced $P_3$-freeness}\label{subsubsec:intro-P3}
The first and simplest application of our framework is to induced $P_3$-freeness, for which we prove the
following theorem.
\begin{theorem} \label{thm:p3_is_EE}
	There exists a distance-approximation algorithm 
	for induced $P_3$-freeness whose query complexity is $\poly(1/\eps)$.
\end{theorem}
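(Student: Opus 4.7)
The plan is to apply the cover framework of Section~\ref{subsubsec:intro-framework} with a singleton cover by a semi-homogeneous partition property. The starting point is the classical graph-theoretic characterization that $G$ is induced $P_3$-free if and only if every connected component of $G$ is a clique, i.e., $G$ is a disjoint union of cliques. This makes it natural to approximate $\calP$ by the class of graphs that decompose into a bounded number of cliques.

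Concretely, for each integer $K\geq 1$ I would define the symmetric function $\phi_K : [K]\times [K] \to \set{0,1,\bot}$ by $\phi_K(i,i)=1$ for every $i\in[K]$ and $\phi_K(i,j)=0$ for $i\neq j$. Then $\calP_{\phi_K}\in\tcalHPP^K$ is exactly the class of graphs that decompose into at most $K$ vertex-disjoint cliques with no edges between them. I would take $\calF(\eps)=\set{\calP_{\phi_K}}$ with $K=\lceil 4/\eps\rceil$ and verify the two conditions of Definition~\ref{def:cover}. Condition~\ref{it:Gprime-in-union} is immediate because $\calP_{\phi_K}\subseteq\calP$, so $\Delta(G',\calP)=0\leq \eps/2$ for every $G'\in\calP_{\phi_K}$.

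For Condition~\ref{it:G-in-P}, given a disjoint union of cliques $G$ with components $C_1,C_2,\ldots$ sorted by decreasing size, I would construct $G''\in\calP_{\phi_K}$ with $\Delta(G,G'')\leq\eps/2$ by a two-step bin-packing argument. First, assign every ``large'' component with $|C_i|>n/K$ to its own part; there are at most $K-1$ such components, since their sizes sum to at most $n$. Second, greedily distribute the remaining ``small'' components (each of size $\leq n/K$) among the remaining parts so as to balance their total sizes. A standard bin-packing estimate then shows that each small part ends up with at most $2n/K$ vertices, while the large parts are already cliques and no component is split across parts. Hence the only modifications needed are the missing edges inside the small parts, costing at most $\binom{2n/K}{2}$ per small part and at most $2n^2/K$ in total, giving $\Delta(G,\calP_{\phi_K})\leq 2/K\leq \eps/2$.

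With the cover in hand, the remainder is routine: Lemma~\ref{lem:shpp-are-ee} applied to $\calP_{\phi_K}$ with accuracy $\eps/4$ yields an estimator of $\Delta(G,\calP_{\phi_K})$ using $\poly(1/\eps,\log K)=\poly(1/\eps)$ queries, and the cover theorem (Theorem~\ref{thm:cover}) then translates this into a $\poly(1/\eps)$-query distance-approximation algorithm for induced $P_3$-freeness, with a suitable additive adjustment to account for the $\eps/2$ slack between $\Delta(G,\calP_{\phi_K})$ and $\Delta(G,\calP)$. The only non-routine step is the combinatorial cover argument, and because separating large and small cliques controls the part sizes tightly, I do not anticipate any real obstacle.
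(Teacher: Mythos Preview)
Your proposal is correct and would prove the theorem, but it takes a slightly different route from the paper's argument. The paper's cover is also a single homogeneous partition property, but with one extra part designated as an \emph{independent set}: with $k=1/\eps+1$, parts $1,\dots,k-1$ are cliques and part $k$ is edge-free. Given a $P_3$-free graph $G$ with clique components $V_1,V_2,\dots$ sorted by decreasing size, the paper places $V_1,\dots,V_{k-1}$ into their own clique parts and \emph{deletes} all edges inside $V_k,V_{k+1},\dots$, sending those vertices to the independent-set part; since $|V_j|<\eps n$ for $j\geq k$, each vertex loses fewer than $\eps n$ edges, giving cost at most $\eps/2$. Your approach instead has \emph{only} clique parts, packs the small components into the remaining parts by a greedy balancing argument, and \emph{adds} edges to complete each packed part into a clique. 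Both are fine: the paper's route is a line shorter because it sidesteps the bin-packing estimate at the price of one extra part, while yours keeps the partition all-clique but needs the $2n/K$ bound on packed part sizes. Either way the cover has size $1$ with $k=O(1/\eps)$ parts, and Corollary~\ref{coro:cover-of-shpp} finishes the proof.
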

In order to prove Theorem~\ref{thm:p3_is_EE} we define a family $\calF \subset \tcalHPP^k$ where $k = O(1/\eps)$ and $|\calF|=1$. In fact, the single property $\calP_\phi$ in $\calF$ belongs to the more restricted class of \emph{homogeneous partition properties}, where the range of the partition function $\phi$ is $\{0,1\}$ (rather than $\{0,1,\bot\}$).
This family contains all graphs that are a union of $O(1/\eps)$ cliques.
Since graphs that are induced $P_3$-free are known to be characterized by being a union of (any number of) cliques,
it quite easily follows that induced $P_3$-freeness is covered by this singleton family of homogeneous partition properties.
(We comment that this characterization was also used for efficient testing of induced $P_3$-freeness~\cite{AS-easily-induced}.)

\subsubsection{Induced $P_4$-freeness}\label{subsubsec:intro-P4}
Our next application is to induced $P_4$-freeness.

\begin{theorem} \label{thm:p4_is_EE}
	There exists a distance-approximation algorithm 
	for induced $P_4$-freeness whose query complexity is $\poly(1/\eps)$.
\end{theorem}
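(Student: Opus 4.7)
The plan is to invoke the property-cover framework: we construct, for each $\eps > 0$, an $\eps$-cover $\calF(\eps)$ of induced $P_4$-freeness consisting of semi-homogeneous partition properties on $k = \poly(1/\eps)$ parts, and then apply Lemma~\ref{lem:shpp-are-ee} to every $\calP' \in \calF(\eps)$ in parallel, using a single random sample and outputting the minimum of the resulting estimates. Since the SHPP distance-approximator is non-adaptive, the total query complexity is $\poly(1/\eps, \log k, \log|\calF(\eps)|)$, so as long as $|\calF(\eps)| \leq \exp(\poly(1/\eps))$ and $k = \poly(1/\eps)$, the overall complexity is $\poly(1/\eps)$.

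The construction uses the classical fact that induced $P_4$-free graphs are exactly the cographs, i.e., graphs built recursively from single vertices by disjoint union $\cup$ and join $\times$, equivalently graphs admitting a cotree decomposition with internal nodes labeled $\cup$ or $\times$. For each labeled cograph $H$ on vertex set $[k]$ and each self-loop assignment $\sigma \in \{0,1\}^k$, define the symmetric partition function $\phi_{H,\sigma} \colon [k]\times[k] \to \{0,1\}$ by $\phi_{H,\sigma}(i,j) = 1$ iff $\{i,j\} \in E(H)$ for $i \neq j$, and $\phi_{H,\sigma}(i,i) = \sigma(i)$; set $\calF(\eps) = \{\calP_{\phi_{H,\sigma}} : H, \sigma\} \subseteq \tcalHPP^k$. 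Any graph $G'$ satisfying $\calP_{\phi_{H,\sigma}}$ is a blowup of the cograph $H$ in which each class $V_i$ is a clique (if $\sigma(i)=1$) or an independent set (if $\sigma(i)=0$); such a blowup is itself a cograph, since its cotree is obtained by attaching, at every leaf $i$ of the cotree of $H$, a $|V_i|$-leaved join (resp.\ union) tree. This makes Condition~\ref{it:Gprime-in-union} of Definition~\ref{def:cover} hold with distance zero.

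The main step, and the principal obstacle, is Condition~\ref{it:G-in-P}: every cograph $G$ on $n$ vertices must be $\eps/2$-close to some $\calP_{\phi_{H,\sigma}} \in \calF(\eps)$. I would prove the following structural lemma: there exists $k = \poly(1/\eps)$ such that for every cograph $G$ there is a partition $(V_1,\dots,V_k)$ of $V(G)$, a cograph $H$ on $[k]$, and $\sigma \in \{0,1\}^k$ for which the blowup of $H$ along this partition differs from $G$ in at most $\eps n^2/2$ pairs. The naive attempt---cut the cotree $T_G$ at an antichain of nodes of subtree-size $\leq \eps n$---already fails on skinny/path-shaped cotrees, since the induced graphs are threshold-like (half-graphs) and admit no such antichain of size $\poly(1/\eps)$; nevertheless, such graphs can be covered by a blowup of an appropriately chosen small cograph $H$ (for the threshold case, $H$ is a half-graph on $O(1/\eps)$ vertices obtained by recursively bisecting the odd-indexed and even-indexed leaves). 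I would therefore argue via a regularity-style sampling scheme: take a uniformly random $S \subseteq V(G)$ of size $k = \poly(1/\eps)$, let $H = G[S]$, and assign each $v \notin S$ to the part indexed by the leaf of the cotree of $G[S]$ whose adjacency pattern into $S$ best matches that of $v$. The analysis uses the fact that every subtree of $T_G$ containing at least $\eps n$ vertices is hit by $S$ with high probability, and that the recursive modular structure of cographs ensures that each $v$ can be correctly routed, so that the total between-part and within-part disagreement with $G$ is at most $\eps n^2/2$ with probability bounded away from zero; the structural lemma then follows by existence.

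To conclude, the number of labeled cographs on $k$ vertices times the $2^k$ choices of $\sigma$ is at most $2^{O(k\log k)}$, so $|\calF(\eps)| \leq 2^{O(k\log k)}$ and $\log|\calF(\eps)| = \poly(1/\eps)$. Applying Lemma~\ref{lem:shpp-are-ee} to every $\calP' \in \calF(\eps)$ with error parameter $\eps/4$ and failure probability $1/(3|\calF(\eps)|)$, using a single shared random sample of $\poly(1/\eps, \log k, \log|\calF(\eps)|) = \poly(1/\eps)$ vertices, yields estimates $\hDelta_{\calP'}$ that are simultaneously $(\eps/4)$-accurate with probability at least $2/3$. By the cover property (Theorem~\ref{thm:cover}), $\min_{\calP' \in \calF(\eps)} \hDelta_{\calP'}$ is within $\eps$ of $\Delta(G,\calP)$, giving the desired $\poly(1/\eps)$-query distance approximator for induced $P_4$-freeness.
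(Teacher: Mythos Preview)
Your framework and cover family coincide with the paper's: Lemma~\ref{lem:p4-cover} uses exactly the homogeneous partition functions encoded by small labeled cotrees (your $(H,\sigma)$ is essentially the paper's $(T,\beta,\iota)$, with the paper restricting to $\sigma\equiv 0$). Your verification of Condition~\ref{it:Gprime-in-union} is correct.

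The gap is Condition~\ref{it:G-in-P}. You correctly isolate the obstacle---cutting the cotree at an antichain of subtrees of size $\Theta(\eps n)$ fails on caterpillar-shaped cotrees (threshold graphs)---and then propose to bypass it by random sampling with a best-match-in-$S$ assignment. But the only concrete analytic fact you invoke, ``every subtree of $T_G$ containing at least $\eps n$ vertices is hit by $S$,'' buys you the bound $|N(v)\triangle N(s)|\le |\text{subtree}(\mathrm{LCA}(v,s))|$, and this bound is exactly what blows up on caterpillars: a leaf $v$ near the root end of a caterpillar sits only in subtrees of size $\Theta(n)$, so the smallest common subtree with \emph{any} sample has size $\Theta(n)$, even though $|N(v)\triangle N(s)|$ can be $O(1)$ for the right $s$. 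In other words, your stated analysis does not get past the very obstacle you flagged. A sampling argument can in fact be pushed through for cographs, but it requires a genuinely different accounting (one that exploits that along a caterpillar path the neighborhoods vary slowly in a one-dimensional way), and nothing in your sketch supplies it.

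The paper instead proves the structural fact (Lemma~\ref{lem:close-tree}) by explicit, deterministic cotree surgery in two phases. Phase~I repeatedly extracts subtrees with leaf-count in $[\tfrac{\eps}{8}n,\tfrac{\eps}{4}n]$ (Claim~\ref{clm:leaf_count_of_subtree}), turns each into an independent set, and collects them into the set $C$; this handles the bushy part of the tree with at most $O(1/\eps)$ blocks. Phase~II deals with the residual skinny skeleton: take the $O(1/\eps)$ branch/LCA nodes, and on each long non-branching path between them \emph{contract} the path to just two internal nodes by sweeping the side-children into two groups according to whether their attaching node is a $\cup$-node or a $\times$-node. This contraction step is precisely what makes caterpillars collapse to $O(1)$ nodes and is the idea your proposal is missing. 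Once Lemma~\ref{lem:close-tree} is in hand, the rest of your plan---apply Lemma~\ref{lem:shpp-are-ee} in parallel via Theorem~\ref{thm:cover} (equivalently Corollary~\ref{coro:cover-of-shpp})---matches the paper exactly.
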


In order to prove Theorem~\ref{thm:p4_is_EE}, we build on a known characterization of induced $P_4$-free graphs.
For every such graph $G$, there exists an auxiliary tree, which we denote by $T_G$, whose leaves correspond to vertices of $G$, and whose internal nodes correspond to cuts in $G$. We show how by performing certain pruning and contraction operations on $T_G$, we can obtain a tree $T'$ for which the following holds. The tree $T'$ can be used to define a
homogeneous partition function $\phi$ with $k = O(1/\eps)$ parts, such that $G$ is close to $\calP_\phi$.
Furthermore, $\calP_\phi$ belongs to a family $\calF \subset \tcalHPP^k$ of size $\exp(\poly(1/\eps))$
such that every graph $G' \in \calF$ is induced $P_4$-free. In other words, $\calF \subset \tcalHPP^k$ is a cover for induced $P_4$-freeness (with $k = O(1/\eps)$ and $|\calF| \leq \exp(\poly(1/\eps))$, and we can
derive Theorem~\ref{thm:p4_is_EE}.\footnote{We note in passing that while the exponent of the polynomial (in $1/\eps$) that bounds the query complexity of our algorithm is quite high, it is actually lower than the corresponding exponent for property testing (with one-sided error)~\cite{AF15}. (Indeed both in~\cite{AF15} and here no attempt was made to optimize the exponent.)}

\subsubsection{Induced $C_4$-freeness}\label{subsubsec:intro-C4}
Recall that the best known testing algorithm for $C_4$-freeness~\cite{GS19} has query complexity $\exp(\poly(1/\eps))$.
We show that distance approximation can be performed with similar complexity.
\begin{theorem} \label{thm:c4_is_EE}
	There exists a distance-approximation algorithm 
	for induced $C_4$-freeness whose query complexity is 
	$\exp(\poly(1/\eps))$.
\end{theorem}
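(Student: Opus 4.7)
The plan is to instantiate the framework of Theorem~\ref{thm:cover} by constructing a cover $\calF \subset \tcalHPP^k$ for induced $C_4$-freeness with $k = \exp(\poly(1/\eps))$ and $|\calF| \leq 2^{\poly(k)}$. Combining Lemma~\ref{lem:shpp-are-ee} with a union bound using $\delta = 1/(6|\calF|)$, the resulting algorithm draws a single sample of $\poly(1/\eps,\log k,\log|\calF|) = \exp(\poly(1/\eps))$ vertices, queries the induced subgraph, computes $\hDelta(G,\calP')$ for every $\calP'\in\calF$ on this sample, and outputs the minimum. The total query complexity is then $\exp(\poly(1/\eps))$, matching the claim.

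The key combinatorial input will be a structure theorem in the spirit of Gishboliner and Shapira~\cite{GS19}: every induced $C_4$-free graph $G$ is $\eps/2$-close to a graph $G^*$ admitting a vertex partition $V_1,\dots,V_k$ with $k = \exp(\poly(1/\eps))$ parts such that, for every pair $i,j$, the bipartite subgraph between $V_i$ and $V_j$ is either complete or empty, and each $V_i$ is either a clique or an independent set. This witness data is encoded by a homogeneous partition function $\phi_G : [k]\times[k] \to \{0,1\}$, so that $G^*$ satisfies $\calP_{\phi_G}$ and $\Delta(G,\calP_{\phi_G}) \leq \eps/2$.

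The cover $\calF$ is then defined as the set of all $\calP_\phi$ with $\phi : [k]\times[k] \to \{0,1\}$ whose witness graphs are induced $C_4$-free. A short combinatorial check shows that this holds exactly when (a) the reduced graph on $[k]$ with edge set $\{(i,j) : i\neq j,\ \phi(i,j)=1\}$ is itself induced $C_4$-free, and (b) whenever $\phi(i,j)=1$ with $i\neq j$, at least one of $\phi(i,i),\phi(j,j)$ equals $1$ (otherwise two independent-set parts joined by a complete bipartite graph immediately yield an induced $C_4$ whenever the two parts each have size at least $2$). Condition~\ref{it:Gprime-in-union} of Definition~\ref{def:cover} is then immediate from (a)--(b), while condition~\ref{it:G-in-P} follows from the structure theorem, since $\phi_G$ automatically satisfies (a)--(b) as $G^*$ is induced $C_4$-free. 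The bound $|\calF| \leq 2^{O(k^2)}$ is straightforward.

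The main obstacle is proving the structure theorem with only $\exp(\poly(1/\eps))$ parts, rather than a tower of height $\poly(1/\eps)$ as would follow from a generic application of the strong regularity lemma. I expect to extract this from the testability proof of~\cite{GS19}, whose core step shows that inside every induced $C_4$-free graph one can find a partition whose between-part densities are all close to $0$ or close to $1$, at the price of $\exp(\poly(1/\eps))$ parts. The remaining technical work is to round these near-extremal densities to exact values in $\{0,1\}$ while altering at most an $\eps/4$-fraction of edges, and to merge or discard any remaining small or irregular parts (totalling $O(\eps n)$ vertices) into the $\eps/2$ distance budget between $G$ and $\calP_{\phi_G}$.
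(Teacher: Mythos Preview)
Your overall plan---build a cover inside $\tcalHPP^k$ and invoke Corollary~\ref{coro:cover-of-shpp}---matches the paper, but the structure theorem you want to extract from~\cite{GS19} is false as stated, and this is precisely why the paper's cover uses \emph{semi}-homogeneous (range $\{0,1,\bot\}$) rather than homogeneous partition functions. Take $Q$ a clique on $n/2$ vertices, $I$ an independent set on $n/2$ vertices, and an arbitrary bipartite graph between them. A short case check shows this is always induced $C_4$-free (any four vertices either contain two adjacent $Q$-vertices, forcing a chord, or two $I$-vertices that are supposed to be a $C_4$-edge but are non-adjacent). Now make the bipartite part uniformly random. Any graph obeying a homogeneous $\phi$ with $k$ parts has at most $k$ distinct vertex-neighbourhoods, so its $Q$--$I$ adjacency matrix has at most $k$ distinct rows; a counting/entropy argument shows that for any $k$ independent of $n$, a random $(n/2)\times(n/2)$ $\{0,1\}$-matrix is w.h.p.\ at Hamming distance $\Omega(n^2)$ from every such matrix. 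Hence this $C_4$-free graph is not $\eps/2$-close to any $\calP_\phi$ with $\phi$ homogeneous and $k=\exp(\poly(1/\eps))$. What~\cite{GS19} actually yields (Lemma~\ref{lem:everything_for_assaf_and_lior}) is a partition $V=Q\cup I$ where $Q$ breaks into $t\le\exp(\poly(1/\eps))$ pairwise-homogeneous cliques and each $N(y)$ for $y\in I$ is a clique---but the $I$--$Q_i$ cuts are \emph{not} homogeneous, and the paper handles them by grouping $I$-vertices according to which clique of the reduced graph $H$ their neighbourhood sits in, and setting $\phi_H=\bot$ between such an $I$-part and the relevant $Q_i$'s.

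A smaller but separate issue: your conditions (a)--(b) do not suffice for Condition~\ref{it:Gprime-in-union}. Take parts $1,2$ cliques and part $3$ an independent set of size $\ge 2$, with $\phi(1,2)=0$ and $\phi(1,3)=\phi(2,3)=1$. Then (a) holds (the reduced graph is $P_3$) and (b) holds (each cross-edge touches a clique part), yet $a\in V_1$, $b\in V_2$, $c,d\in V_3$ induce a $C_4$.
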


To prove Theorem~\ref{thm:c4_is_EE} we build on a lemma concerning induced $C_4$-free graphs, which follows from~\cite{GS19}.
The lemma shows that every induced $C_4$-free graph $G=(V,E)$ is close to another induced $C_4$-free graph $G'=(V,E')$ with a useful property. Specifically, the vertices of $V$ can be partitioned into an independent set, $I$, and a collection of cliques, $Q_1,\dots,Q_t$, such that between every two cliques there are either no edges, or all possible edges.
Furthermore, the neighbors of every vertex $y\in I$ induce a clique in $G'$.

Using this lemma we define a family $\calF \subset \tcalHPP^k$, where as opposed to the case of induced $P_3$-freeness and induced $P_4$-freeness, the partition properties in $\calF$ are not homogeneous properties but only semi-homogeneous. Roughly speaking, when attempting to define a family of partition properties $\calF$ such that $G'$ (and hence $G$) is close to a graph having some property in $\calF$, we need to allow for  non-homogeneity between parts that refine the independent set $I$ and parts corresponding to the cliques $\{Q_i\}_{i=1}^t$. Since the number of parts $k$ as well as the size of $\calF$ are double-exponential in $\poly(1/\eps)$, the complexity of the resulting distance-approximation algorithm (as stated in Theorem~\ref{thm:c4_is_EE})  is $\exp(\poly(1/\eps))$.

\subsubsection{Chordality}\label{subsubsec:intro-chordal}
Recall that a graph is \emph{chordal} if every cycle of length greater than three in the graph contains a chord (i.e., an edge between two non-consecutive vertices on the cycle). Equivalently, a graph is chordal if it contains no induced $C_k$ for $k>3$
(so that in particular it is induced $C_4$-free).
Chordal graphs have been studied extensively in the context of optimization problems (see for example the survey~\cite{ChordalSurvey}.

As stated previously, in~\cite{GS19} it was shown that chordality can be tested with query complexity $\exp(\poly(1/\eps))$, and this was improved to $\poly(1/\eps)$ in~\cite{verclos2019chordal}.
\begin{theorem} \label{thm:chordaily_is_EE}
	There exists a distance-approximation algorithm
	for chordality whose query complexity is $\poly(1/\eps)$.
\end{theorem}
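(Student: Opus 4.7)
The plan is to apply the property-cover framework (Theorem~\ref{thm:cover}) together with Lemma~\ref{lem:shpp-are-ee}, in parallel to the proofs of Theorems~\ref{thm:p3_is_EE}--\ref{thm:c4_is_EE}. Specifically, for every $\eps \in (0,1)$ I would construct an $\eps$-cover $\calF(\eps) \subset \tcalHPP^k$ of chordality with $k \le \exp(\poly(1/\eps))$ and $|\calF(\eps)| \le \exp(\poly(1/\eps))$. Given such a cover, one runs the distance-approximation algorithm of Lemma~\ref{lem:shpp-are-ee} on each $\calP_\phi \in \calF(\eps)$ with failure probability $\delta \coloneq 1/(3|\calF(\eps)|)$; since this algorithm is non-adaptive and depends only on the subgraph induced by a uniform random vertex sample, a single sample of size $\poly(1/\eps, \log k, \log|\calF(\eps)|) = \poly(1/\eps)$ suffices for all cover members simultaneously, and returning the minimum of the obtained estimates yields the $\poly(1/\eps)$-query distance-approximation algorithm claimed in Theorem~\ref{thm:chordaily_is_EE}.

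The structural heart of the argument is a claim of the following form: every chordal graph $G$ is $\eps/2$-close to a chordal graph $G'$ admitting a partition $V = V_1 \cup \cdots \cup V_k$ with $k = \poly(1/\eps)$ such that (i) for every pair $(V_i,V_j)$ the bipartite graph between them is complete, empty, or irrelevant (``don't care''); (ii) each $V_i$ is either a clique or an independent set; and (iii) as $G$ ranges over chordal graphs, the resulting partition function $\phi$ takes at most $\exp(\poly(1/\eps))$ distinct forms. The natural tool is the clique-tree characterization of chordal graphs -- every chordal graph is the intersection graph of subtrees of a host tree, equivalently, admits a clique tree whose bags are its maximal cliques. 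I would reduce the clique tree by pruning light branches whose contribution to the edge count is collectively $\le \eps/2 \cdot |V|^2$, and by merging vertices with identical subtree-support pattern until only $\poly(1/\eps)$ vertex classes remain. The resulting reduced structure then determines a semi-homogeneous $\phi$, and enumerating all outcomes of the reduction yields $\calF(\eps)$; each $\calP_\phi$ so produced is chordal by construction (establishing condition~\ref{it:Gprime-in-union} of Definition~\ref{def:cover}), and the reduction itself supplies the witness required by condition~\ref{it:G-in-P}.

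The main obstacle is preserving chordality throughout the reduction. Chordality is a global property that is not closed under arbitrary edit operations -- removing a single chord from a long chorded cycle can introduce a large induced cycle -- so, unlike the $P_3$ and $P_4$ cases, one cannot simply round cliques or coarsen a partition and expect the result to remain chordal. The likely remedy is to perform all reduction steps on the clique tree rather than on the graph itself: contracting adjacent tree nodes corresponds to merging two overlapping maximal cliques into one larger clique, and removing an entire subtree corresponds to deleting a chordal induced subgraph; both operations provably produce chordal graphs. Once this chordality-preserving reduction is in place, bounding the number of parts by $\poly(1/\eps)$ and the number of possible reduced structures by $\exp(\poly(1/\eps))$ becomes routine combinatorial bookkeeping on labeled trees with $\poly(1/\eps)$ nodes, paralleling the enumerations used in the proofs of Theorems~\ref{thm:p4_is_EE} and~\ref{thm:c4_is_EE}, and Theorem~\ref{thm:chordaily_is_EE} then follows from Theorem~\ref{thm:cover} and Lemma~\ref{lem:shpp-are-ee}.
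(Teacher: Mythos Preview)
Your high-level plan matches the paper's exactly: cover chordality by a family in $\tcalHPP^k$ and invoke Corollary~\ref{coro:cover-of-shpp}, using the clique-tree characterization as the structural tool. You also correctly identify chordality-preservation as the main obstacle. However, two concrete steps in your proposed reduction do not work as stated, and a crucial preliminary step is missing.

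First, you omit a decomposition the paper performs \emph{before} touching the clique tree. Lemma~\ref{lem:chordal_everything_for_assaf_and_lior} (adapted from the $C_4$-freeness machinery of~\cite{GS19}) shows that every chordal $G$ is $\eps'$-close to a chordal $G'$ with $V = Q \cup I$ where $G'[I]$ is empty, each $y \in I$ has a clique neighborhood, and---crucially---$G'[Q]$ is covered by $r = \poly(1/\eps)$ maximal cliques. Only this bound on $r$ forces the clique tree of $G'[Q]$ to have at most $r$ leaves (Claim~\ref{claim:leaves_are_X}) and hence at most $2r$ non-branching paths, which is what makes the subsequent tree reduction tractable. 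Without this step, a chordal graph can have $\Omega(n)$ maximal cliques and a highly branched clique tree; your ``pruning light branches'' does not get you down to $\poly(1/\eps)$ tree nodes while staying $\eps$-close. The vertices in $I$ are then accommodated by extra $\bot$-parts in $\tilde{\phi}_T$, one per node of $T$. Second, your tree-contraction operation---merging two adjacent maximal cliques $C_1, C_2$ into one---adds $|C_1 \setminus C_2|\cdot|C_2 \setminus C_1|$ edges, which is $\Theta(n^2)$ when both cliques are large with small overlap, so it is not an $\eps$-close operation in general. The paper instead uses a $(C_1, C_2, \eps')$-\emph{simplification} (Definition~\ref{def:simplification}, Lemma~\ref{lem:two_clique_partition}): it only \emph{removes} edges in the cut $(C_1 \setminus C_2, C_2 \setminus C_1)$, exploiting that this cut is induced-$M_2$-free, so as to replace each long non-branching path in the clique tree by one of length $O(1/\eps')$ at cost $\eps'$. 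Finally, your two uses of $k$ disagree: the parts of $\tilde{\phi}_T$ are indexed by connected subtrees of a $\poly(1/\eps)$-node tree, so $k = \exp(\poly(1/\eps))$ as in your first paragraph, not $\poly(1/\eps)$ as in your second.
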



Our starting point is a well known characterization of chordal graphs, which states that a graph is chordal if and only if its maximal cliques can be arranged in a \emph{clique tree} (see Definition~\ref{def:clique-tree}).
One central ingredient in the proof of Theorem~\ref{thm:chordaily_is_EE} is showing that, roughly speaking, every chordal graph  $G=(V,E)$ is close to another chordal graph, $G'=(V,E')$, where $G'$ has a \emph{small} clique tree. (More precisely, $V$ can be partitioned into two parts, $X$ and $Y$ such that the subgraph of $G'$ induced by $X$ has a  clique tree of size $\poly(1/\eps)$ while the subgraph induced by $Y$ is empty, and the neighbors of every vertex in $Y$ form a clique.)
On a high-level, we show that a small clique tree can be obtained by removing edges from $G$ in a manner that ``shortens'' long paths in the tree.
A second main ingredient is the definition of a subset of semi-homogeneous partition properties that covers chordality,
based on small clique trees.
To obtain such a subset we consider all possible clique trees of size $\poly(1/\eps)$, and let the parts of the partitions in the cover be intersections of subsets of maximal cliques. We show how this ensures homogeneity between the parts.
(More precisely, these are the parts that correspond to partitions of $X$ -- the parts that correspond to a partitions of $Y$ are defined based on their neighbors in $X$.)
We hence get that chordality is $\eps$-covered by a family of size $\exp(\poly(1/\eps))$ of properties in $\tcalHPP^{k}$
for $k= \exp(\poly(1/\eps))$, from which Theorem~\ref{thm:chordaily_is_EE} follows.

\subsection{Discussion: on the benefits of non-regular partitions}\label{subsec:disc}
As noted at the start of Section~\ref{subsubsec:intro-framework},
all previous general results for distance approximation, namely, for all testable properties~\cite{FN}, monotone properties~\cite{ASS,HKLLS16} and hereditary properties~\cite{HKLLS17}
can be viewed as implicitly applying our covering framework.
The differences between the applications (which affect the resulting query complexities) are in the size of the covering family and in the  algorithm used to estimate the distance to members of the family. In all cases, the covering family is defined by regular partitions (either Szemeredi regular partitions~\cite{Sz} in~\cite{FN,ASS}, or Frieze-Kannan (``weak'') regular partitions~\cite{FK} in~\cite{HKLLS16,HKLLS17}). 
Roughly speaking, a regular partition of a graph is an equipartition of the  vertices, where the edge densities between pairs of subsets of parts are similar to the edge densities between the corresponding parts.
The two types of regular partitions differ in the precise way that similarity (regularity) is quantified.
The number of parts in the partitions defining the covering family as well as the size of the family depend on the type of regular partitions used, as well as on a regularity parameter $\gamma$.
 In turn, the setting of this parameter is dependent on the property to be covered, and is always at most $\eps$.

Furthermore, in the aforementioned works, the algorithm for approximating the distance of the given graph $G$ to
properties in the covering family works by constructing an approximate representation of a regular partition of $G$ (or possibly a set of approximations). 
The ``universality'' of regular partitions ensures that every graph has a regular partition
(of both aforementioned types). 
It has been shown by Conlon and Fox~\cite{CF11}, that even for the (weaker) Frieze-Kannan regular partitions,
some input graphs only have regular partitions 
of size $\exp(1/\gamma)$. Since (regardless of the property covered) $1/\gamma = \Omega(1/\eps)$ (and the distance-approximation algorithm must work for all graphs), the resulting query complexity is at least $\exp(1/\eps)$.

We are able to reduce the query complexity exponentially for the properties discussed in Sections~\ref{subsubsec:intro-P3}--\ref{subsubsec:intro-chordal} (and in particular obtain polynomial query complexity) by diverging from the above in two (related) ways.
First, the covering families we use are not defined by regular partitions, but rather by the simpler semi-homogeneous partitions. Second, the distance-approximation algorithm we use to semi-homogeneous partition properties does not attempt to construct a representation of a partition of the input graph.
This allows to obtain efficient distance-approximation algorithms
for properties that have small and simple covers
even though some input graphs are complex.
Hence, by giving up on the ``universality'' of regular partitions, and using the simpler semi-homogeneous partitions, we are able use smaller covers, and our dependence on the number of parts in these partitions (which may be $\exp(\poly(1/\eps))$ is logarithmic (rather than at least linear).

\subsection{Other related work on distance approximation}
\label{subsec:rel-work}
Distance approximation was  first
explicitly studied in~\cite{PRR} together with the closely related notion of
{\em tolerant testing\/}.\footnote{A tolerant-testing algorithm
is required  to accept objects that are $\eps_{1}$-close to having a given property $\calP$
and reject objects that are $\eps_{2}$-far from having property $\calP$,
for $0 \leq \eps_{1} < \eps_{2} \leq 1$.
Standard property testing refers to the special case of $\eps_1=0$.}
In~\cite{PRR} it is observed that some earlier works
imply results
for distance approximation.
In particular this
includes the aforementioned result for $\rho$-$k$-cut (and $k$-colorability)~\cite{GGR},
connectivity of sparse graphs~\cite{CRT}, edit
distance between strings~\cite{BEKMRRS}
and $\ell_1$-distance between distributions~\cite{BFRSW}.
The new results obtained in~\cite{PRR}
 were for monotonicity
 of functions $f: [n]\to R$, and clusterability of a set of
 points.  The first result was later improved in~\cite{ACCL}
and extended to higher dimensions in~\cite{FR}.

In~\cite{FF} it is shown that there are properties
of Boolean functions for which there exists a testing algorithm
whose complexity depends only on $\eps$ yet there is no such tolerant
testing algorithm. In contrast, as already noted, in~\cite{FN} it is shown
that {\em every\/} graph property that has a testing algorithm
in the adjacency-matrix model whose complexity is only a function of $\eps$,
has a distance-approximation algorithm
whose complexity is only a function of $\eps$.
Distance approximation in sparse graphs  is studied for a variety of properties (such as $k$-connectivity) in~\cite{MR} and in~\cite{CGR},
and there is a recent work on tolerant testing of arboricity in sparse graphs~\cite{ELR}.
Guruswami and Rudra~\cite{GuRu} present tolerant testing algorithms for
several constructions of locally testable codes, and
Kopparty and Saraf~\cite{KoSa} study tolerant linearity testing
under general distributions and its connection to locally testable
codes. Tolerant testing of image properties is studied in~\cite{BMR}, and tolerant junta testing of juntas in~\cite{BCELR}.

\section{Preliminaries}\label{sec:prel}

	\begin{definition}\label{def:dist}
		Two $n$-vertex graphs $G = (V,E)$ and $G'=(V,E')$ are at {\sf distance} ${\rho}$
if
$|E\setminus E'| + |E'\setminus E| = {\rho} n^2$.
We denote the distance between $G$ and $G'$ by $\Dist(G,G')$.

For a graph $G$ and a graph property $\calP$, the {\sf distance of $G$ to $\calP$}, denoted $\Dist(G, \calP)$, is the minimal distance between $G$ and any graph $G'\in \calP$.
	\end{definition}




	
	\begin{definition}\label{def:dist-approx-alg-add}
		We say that $\calA$ is a {\sf distance-approximation algorithm (with an additive error)} for graph property $\calP$ if the following holds for every graph $G = (V,E)$ and every $\eps\in (0,1)$.
Given $\eps$ 
as input and the ability to perform queries of the form: ``is $(u,v) \in E$'' for
$u,v \in V$, the algorithm $\calA$
 returns  an estimate $\hDelta$, such that
$|\hDelta -\Delta(G, \calP)| \leq  \eps$
  with probability at least 
  $2/3$.
	\end{definition}




For a graph $G = (V,E)$ and a subset $U \subseteq V$, we use the notation $G[U]$ for the subgraph of $G$ that is induced by $U$.
For a vertex $v\in V$ we use $N_G(v)$ to denote the set of neighbors of $v$ in $G$.
When we refer to a clique $Q$ in a graph $G=(V,E)$, we mean that $Q$ is a subset of $V$ such that the subgraph of $G$ induced by $Q$ is a clique.


	

	\begin{definition}\label{def:cuts}
For a graph $G = (V,E)$ and
a pair of subsets 	$U,U' \subseteq V$, we say that the cut $(U,U')$ is
{\sf empty} (in $G$)  if $(u,u') \notin E$ for every $u\in U, u' \in U'$.
We say that it is {\sf complete} if $(u,u') \in E$ for every $u\in U, u' \in U'$, $u \neq u'$.
If $(U,U')$ is either empty or completely, then we say that it is {\sf homogeneous}.
	\end{definition}
In Definition~\ref{def:cuts} we slightly abuse the notion of a cut to also include the case of $U' = U$
(so that an empty cut $(U,U)$ is an independent set and a complete cut $(U,U)$ is a clique).



\ifnum\fullver=1
We next present a central definition with many of the notions (and notations) used throughout this work.
(Some of these notions already appeared in the introduction.)
\begin{definition}\label{def:partitions}
For an integer $k$, a {\sf  homogeneous $k$-partition function} 
$\phi$ is a (symmetric) function from $[k]\times [k]$ to $ \{0,1\}$. If the range of $\phi$ is $ \{0,1,\bot\}$, then it is a {\sf semi-homogeneous partition function}.

We say that a graph $G = (V,E)$ {\sf obeys} $\phi$ if there exists a partition $(V_1,\dots,V_k)$ of $V$ such that
for every $i,j$ such that $V_i\neq \emptyset$ and $V_j \neq \emptyset$, we have that
$(V_i,V_j)$ is empty if and only if $\phi(i,j) = 0$ and $(V_i,V_j)$ is complete if and only if $\phi(i,j)=1$.
(If $\phi(i,j) = \bot$, then there is no restriction on the cut $(V_i,V_j)$.)
We refer to the partition $(V_1,\dots,V_k)$ as a {\sf witness} partition for $\phi$, or just as a
{\sf $\phi$-partition} of $G$.
For each $i \in [k]$ and $v\in V_i$, we say that $v$ is {\sf assigned} by the partition to {\sf part} $i$ of $\phi$.



Let $\Phik$ denote the set of all homogenous $k'$-partition functions for $1 \leq k' \leq k$, and let
$\tPhik$ denote the set of all semi-homogenous $k'$-partition functions for $1 \leq k' \leq k$.

For each (semi-)homogeneous partition function $\phi$, the corresponding (semi-)homogenous partition property, $\calP_\phi$, contains all graphs $G$ that obey $\phi$.
Let $\calHPP^k \eqdef \{\calP_\phi: \phi \in \Phik\}$, and $\tcalHPP^k \eqdef \{\calP_\phi: \phi \in \tPhik\}$.
\end{definition}

\else
Recall that in the introduction (Section~\ref{subsubsec:intro-SHPP}) we defined semi-homogeneous partition properties (based on semi-homogeneous partition functions $\phi:[k]\times[k] \to  \{0,1,\bot\}$). We next introduce several more related notions (and notations) used throughout this work.
We say that a partition function is \emph{homogeneous} if its range is $\{0,1\}$.
Let $\Phik$ denote the set of all homogenous $k'$-partition functions for $1 \leq k' \leq k$, and let
$\tPhik$ denote the set of all semi-homogenous $k'$-partition functions for $1 \leq k' \leq k$.
Recall that $\tcalHPP^k \eqdef \{\calP_\phi: \phi \in \tPhik\}$, and
let $\calHPP^k \eqdef \{\calP_\phi: \phi \in \Phik\}$,
We say that a graph $G = (V,E)$ {\sf obeys} $\phi$ if it belongs to $\calP_\phi$.
\fi

\section{Distance approximation by covering}
\label{sec:cover}

Recall the that definition of an $\eps$-cover (Definition~\ref{def:cover}) was given in the introduction.

	The following theorem is the base of our framework, and shows how to design distance-approximation algorithms for new properties using (existing) distance-approximation algorithms for other properties. 

	
\begin{theorem}\label{thm:cover}
Let $\calP$ be a graph property. 
Suppose that for every $\eps > 0$, there exists a family $\calF(\eps)$, which is an $\eps$-cover for $\calP$ and
for which the following holds.
For each $\calP' \in \calF(\eps)$ 
there exists an 
 algorithm $\calA_{\calP',\eps}$
that, given $\delta > 0$, queries the induced subgraph over at most $q_\eps\cdot \log(1/\delta)$ random vertices of the input graph $G$, 
and outputs a value $\hDelta_{\calP'}$ such that  $|\hDelta_{\calP'} - \Delta(G,\calP')|\leq \eps/2$
with probability at least $1-\delta$.
		Then there exists a distance approximation algorithm $\calA$ 
		for $\calP$
		whose query complexity is
		$O(\log^2(|\calF(\eps)|)\cdot q_\eps^2)$.
	\end{theorem}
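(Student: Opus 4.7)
The plan is to design a single algorithm $\calA$ that simulates all of the covering algorithms $\{\calA_{\calP',\eps}\}_{\calP' \in \calF(\eps)}$ on a shared random sample, and then outputs the minimum of their estimates. Concretely, first I would draw a single set $S$ of $s = O(q_\eps \log |\calF(\eps)|)$ vertices uniformly at random from $V$, and query the entire induced subgraph $G[S]$ (costing $O(s^2) = O(q_\eps^2 \log^2|\calF(\eps)|)$ adjacency queries). Then for every $\calP' \in \calF(\eps)$, I would invoke $\calA_{\calP',\eps}$ with confidence parameter $\delta = 1/(3|\calF(\eps)|)$, feeding it (a subset of) $S$ as its random vertices; since $s \ge q_\eps \log(1/\delta)$, the sample is large enough. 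The algorithm outputs $\hDelta \coloneq \min_{\calP' \in \calF(\eps)} \hDelta_{\calP'}$.

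For correctness, I would first apply a union bound: with probability at least $2/3$, all $|\calF(\eps)|$ estimates satisfy $|\hDelta_{\calP'} - \Delta(G,\calP')| \le \eps/2$ simultaneously. Conditioned on this good event, the proof reduces to showing $|\min_{\calP'} \Delta(G,\calP') - \Delta(G,\calP)| \le \eps/2$, so that taking the minimum over approximate values still lands within $\eps$ of $\Delta(G,\calP)$. For the upper direction, I use Condition~\ref{it:G-in-P} of Definition~\ref{def:cover}: taking $G^\ast \in \calP$ achieving $\Delta(G,G^\ast) = \Delta(G,\calP)$, the condition supplies some $\calP^\ast \in \calF(\eps)$ with $\Delta(G^\ast,\calP^\ast) \le \eps/2$, and by the triangle inequality for $\Delta$, $\Delta(G,\calP^\ast) \le \Delta(G,\calP) + \eps/2$. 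For the lower direction, I use Condition~\ref{it:Gprime-in-union}: for any $\calP' \in \calF(\eps)$, if $G'$ achieves $\Delta(G,G') = \Delta(G,\calP')$, then $\Delta(G',\calP) \le \eps/2$, so $\Delta(G,\calP) \le \Delta(G,\calP') + \eps/2$ for \emph{every} $\calP' \in \calF(\eps)$.

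These two inequalities sandwich $\min_{\calP'} \Delta(G,\calP')$ in $[\Delta(G,\calP) - \eps/2,\, \Delta(G,\calP) + \eps/2]$, and combining with the sample-based $\eps/2$ error on each estimate yields $|\hDelta - \Delta(G,\calP)| \le \eps$ with probability at least $2/3$, as required by Definition~\ref{def:dist-approx-alg-add}. There is no real obstacle here beyond bookkeeping; the only subtlety worth flagging is that the stated query complexity is $q_\eps^2 \log^2|\calF(\eps)|$ rather than $|\calF(\eps)| \cdot q_\eps \log|\calF(\eps)|$, which is only achievable because the algorithms $\calA_{\calP',\eps}$ are nonadaptive and depend on their input only through an induced subgraph on random vertices. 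This lets me reuse one sample for all $|\calF(\eps)|$ algorithms, turning a multiplicative $|\calF(\eps)|$ factor into only a logarithmic one.
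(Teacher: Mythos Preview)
Your proposal is correct and follows essentially the same approach as the paper's proof: share one random sample of size $\Theta(q_\eps\log|\calF(\eps)|)$ across all $\calA_{\calP',\eps}$ with $\delta=1/(3|\calF(\eps)|)$, output the minimum estimate, apply a union bound, and then use Conditions~\ref{it:G-in-P} and~\ref{it:Gprime-in-union} with the triangle inequality to bound $\hDelta$ from above and below. Your remark about nonadaptivity being what enables the shared sample (and hence the stated query complexity) is exactly right and matches the paper's reasoning.
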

Note that the algorithms referred to in Theorem~\ref{thm:cover} (denoted $\calA_{\calP',\eps}$) are not
precisely distance-approximation algorithms as defined in Definition~\ref{def:dist-approx-alg-add}. This is the case since they are not required to work for any given approximation parameter, 
but rather the quality of their approximation of $\Delta(G,\calP')$ for $\calP' \in \calF(\eps)$ is linked to the quality of the cover $\calF(\eps)$.

\def\ProofOfCoverTheorem{
	Given query access to a graph $G$, the distance approximation algorithm $\calA$ starts by estimating the distance of $G$ to each property $\calP'$ in $\calF =\calF(\eps)$.
	It does so by selecting, uniformly, independently at random
	$s=\Theta\left(q_\eps\cdot \log(3|\calF|)\right)$
	vertices,
	and querying all vertex pairs in the sample to obtain the induced subgraph $H$.
It then runs the distance approximation algorithms $\left\{\calA_{\calP',\eps}\right\}$
	for each $\calP' \in \calF$ with 
	confidence parameter $\delta = 1/(3|\calF|)$ using the (same) sampled subgraph $H$.
	Let $\hDelta_{\calP'}$ be the estimate the algorithm $\calA$ obtains for the distance of $G$ to property $\calP'$.
	It outputs $\hDelta = \min_{\calP'\in \calF} \left\{ \hDelta_{\calP'} \right\}$.
	The query complexity of $\calA$ is $s\choose 2$, 
	as stated in the theorem.
	
	It remains to show that  
$|\hDelta - \Delta(G,\calP)| \leq \eps$
with probability at least	$2/3$.
	By taking a union bound over all executions of 
	the 
algorithms $\calA_{\calP',\eps}$ for the different 
properties $\calP' \in \calF$,
we get that with probability at least 
	$2/3$, for every $\calP'\in \calF$ we have that $|\hDelta_{\calP'} - \Delta(G,\calP') | \leq \eps/2$.
	From this point on, we condition on this event.
	Also, recall that by the premise of this theorem, the family $\calF$ is an $\eps$-cover of $\calP$
	(as defined in Definition~\ref{def:cover}).
	
	Let $G^* \in \calP$ satisfy $\Delta(G,G^*) = \Delta(G,\calP)$ (i.e., $G^*$ is a graph closest to $G$ in $\calP$).
	Let $\calP^* = \argmin_{\calP' \in \calF}\{\Delta(G^*,\calP')\}$.
	By Condition~\ref{it:G-in-P} in Definition~\ref{def:cover},
	$\Delta(G^*,\calP^*) \leq \eps/2$. By applying the triangle inequality, we get that
	$\Delta(G,\calP^*) \leq \Delta(G,\calP) + \eps/2$. It follows that
	\[
	\hDelta \leq \hDelta(\calP^*) \leq  \Delta(G,\calP^*) + \eps/2 \leq \Delta(G,\calP) + \eps \;.
	\]
	For the lower bound on $\hDelta$, let $\widetilde{\calP} = \argmin_{\calP' \in \calF} \{\Delta(G,\calP')\}$,
	and let $\wtG$ be a graph in $\widetilde{\calP}$ for which $\Delta(G,\wtG) = \Delta(G,\widetilde{\calP})$
	(i.e., $\wtG$ is a graph closest to $G$ in $\widetilde{\calP}$).
	By Condition~\ref{it:Gprime-in-union} in Definition~\ref{def:cover}, $\Delta(\wtG,P) \leq \eps/2$, so that
	by the triangle inequality,
	$\Delta(G,\widetilde{\calP}) \geq \Delta(G,\calP) - \eps/2$.
	By the definition of $\widetilde{\calP}$, for every $\calP' \in \calF$, we have that
	$\Delta(G,\calP') \geq \Delta(G,\calP) - \eps/2$. Since we are conditioning on the event that
	$\hDelta(\calP') = \Delta(G,\calP') \pm \eps/2$, we get that
	\[
	\hDelta = \min_{\calP' \in \calF} \left\{\hDelta(\calP')\right\} \geq \min_{\calP' \in \calF} \left\{\Delta(G,\calP')\right\} - \eps/2     \geq \Delta(G,\calP) - \eps \;,
	\]
	as required.
}
\ifnum\fullver=1
\begin{proof}
\ProofOfCoverTheorem
\end{proof}

\else
The proof of Theorem~\ref{thm:cover} appears in Appendix~\ref{app:cover}.
\fi
	The following is a direct corollary of Theorem~\ref{thm:cover} and Lemma~\ref{lem:shpp-are-ee} (proven in Appendix~\ref{app:shpp-csp}).
	\begin{corollary}\label{coro:cover-of-shpp}
		Let $\calP$ be a graph property and $f:[0,1] \to \mathbb{N}$ a function.
Suppose that for every $\eps > 0$, there exists
a family of properties 
$\calF \subset \tcalHPP^{k}$, where  $k = f(\eps)$, such that $\calF$ is an $\eps$-cover for $\calP$.
		Then there exists a distance approximation algorithm $\calA$ 
for $\calP$ whose query complexity is
  $ \poly(1/\eps,\log k ,\log(|\calF|))$.
	\end{corollary}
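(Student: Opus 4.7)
The plan is to apply Theorem~\ref{thm:cover} directly, using the algorithms guaranteed by Lemma~\ref{lem:shpp-are-ee} as the per-property estimators $\calA_{\calP',\eps}$. The main work is just matching the forms of the two statements and tracking the resulting query complexity.

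First, given $\eps > 0$, I would obtain the $\eps$-cover $\calF \subset \tcalHPP^{k}$ with $k = f(\eps)$ supplied by the hypothesis. For each property $\calP' = \calP_\phi \in \calF$, I would invoke Lemma~\ref{lem:shpp-are-ee} with approximation parameter $\eps/2$ and confidence parameter $\delta$. This yields an algorithm $\calA_{\calP',\eps}$ that samples $\poly(1/\eps, \log k, \log(1/\delta))$ vertices uniformly at random, queries all pairs in the induced subgraph, and outputs an estimate of $\Delta(G, \calP')$ within additive error $\eps/2$ with probability at least $1-\delta$. These algorithms exactly play the role of the per-cover-element estimators required by Theorem~\ref{thm:cover}.

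Next, following the template in the proof of Theorem~\ref{thm:cover}, I would draw a single common vertex sample $S$ of size $\poly(1/\eps, \log k, \log|\calF|)$, obtained by setting $\delta = 1/(3|\calF|)$ in the per-property guarantee, and query the induced subgraph on $S$. Then for each $\calP' \in \calF$ I would run $\calA_{\calP',\eps}$ on this same sample to obtain an estimate $\hDelta_{\calP'}$, and return $\hDelta = \min_{\calP' \in \calF} \hDelta_{\calP'}$. A union bound ensures that with probability at least $2/3$ every $\hDelta_{\calP'}$ is within $\eps/2$ of $\Delta(G,\calP')$. Combined with the two cover conditions of Definition~\ref{def:cover} and the triangle inequality (in the same form as in the proof of Theorem~\ref{thm:cover}), this gives $|\hDelta - \Delta(G,\calP)| \leq \eps$. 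The number of pair-queries is $\binom{|S|}{2} = \poly(1/\eps, \log k, \log|\calF|)$, which matches the bound claimed in the corollary.

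The only mild obstacle is a cosmetic mismatch between the two hypotheses: Theorem~\ref{thm:cover} is phrased for per-property algorithms whose sample complexity factors as $q_\eps \cdot \log(1/\delta)$, whereas Lemma~\ref{lem:shpp-are-ee} gives the slightly weaker form $\poly(1/\eps, \log k, \log(1/\delta))$. I would resolve this either by replaying the proof of Theorem~\ref{thm:cover} verbatim with this slightly more general sample-complexity form (only the union-bound step uses this form, and it goes through unchanged since $\poly(\log(3|\calF|))$ is still absorbed in $\poly(\log|\calF|)$), or, even more simply, by boosting the lemma to the form $q_\eps \cdot \log(1/\delta)$ via standard median-of-independent-runs amplification before invoking Theorem~\ref{thm:cover} as a black box. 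Either route yields the stated $\poly(1/\eps, \log k, \log|\calF|)$ query complexity.
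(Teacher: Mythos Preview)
Your proposal is correct and matches the paper's own approach: the paper simply states that the corollary is a direct consequence of Theorem~\ref{thm:cover} and Lemma~\ref{lem:shpp-are-ee}, which is exactly the combination you carry out. Your observation about the cosmetic mismatch in the form of the sample-complexity dependence on $\delta$ (and your two fixes) is a nice bit of care that the paper glosses over.
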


\section{Induced $P_3$-freeness}\label{sec:p3}

In this section we prove  Theorem~\ref{thm:p3_is_EE} (which as noted in the introduction, is simple, and can be viewed as a warmup for the other results).
For the sake of succinctness, in what follows we refer to induced $P_3$-freeness
as $P_3$-freeness.

In order to prove Theorem~\ref{thm:p3_is_EE}
we establish the next lemma (which will then allow us to apply 
Corollary~\ref{coro:cover-of-shpp}.
\begin{lemma}\label{lem:p3-cover}
For every $\eps>0$, there exists a family of
 graph properties $\calF(\eps) \subset \calHPP^k$, for 
 $k = O(1/\eps)$,
 such that
   $\calF(\eps)$ is an $\eps$-cover for $P_3$-freeness and 
   $|\calF(\eps)|=1$.
\end{lemma}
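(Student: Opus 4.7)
The plan is to exploit the classical structural characterization of induced $P_3$-free graphs: a graph is induced $P_3$-free if and only if each of its connected components is a clique (equivalently, it is a disjoint union of cliques). Given this, I will define a single homogeneous partition function whose associated property consists of graphs that are ``almost'' disjoint unions of cliques with a bounded number of ``large'' cliques.

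Concretely, fix $k = \lceil 2/\eps \rceil$ and define $\phi \in \Phi^k$ by setting $\phi(i,i)=1$ for $i \in [k-1]$, $\phi(k,k)=0$, and $\phi(i,j)=0$ for all $i \neq j$. Let $\calF(\eps) = \{\calP_\phi\}$, so $|\calF(\eps)|=1$. The intuition is that parts $1,\dots,k-1$ serve as ``clique slots'' for the largest cliques of the input, while part $k$ is an independent-set ``sink'' used to absorb the vertices from all remaining small cliques.

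Condition~\ref{it:Gprime-in-union} of Definition~\ref{def:cover} is immediate: any $G'$ obeying $\phi$ is a vertex-disjoint union of at most $k-1$ cliques together with isolated vertices, hence $P_3$-free, so $\Delta(G',\calP)=0$. For Condition~\ref{it:G-in-P}, let $G$ be $P_3$-free and write $V(G) = Q_1 \sqcup Q_2 \sqcup \cdots \sqcup Q_m$ as its decomposition into connected components, each a clique, ordered with $|Q_1| \geq |Q_2| \geq \cdots \geq |Q_m|$. If $m \leq k-1$ then $G$ itself obeys $\phi$ via the partition $(Q_1,\dots,Q_m,\emptyset,\dots,\emptyset)$, and the distance is $0$. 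Otherwise, consider the partition $(V_1,\dots,V_k)$ with $V_i = Q_i$ for $i \in [k-1]$ and $V_k = Q_k \cup \cdots \cup Q_m$. Since the $Q_j$ are the connected components of $G$, there are no edges between any two distinct $V_i$'s; thus the only edges violating $\phi$ are the edges inside $V_k$, of which there are exactly $\sum_{j=k}^m \binom{|Q_j|}{2}$. Deleting these edges yields a graph in $\calP_\phi$.

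The key quantitative step is bounding the number of deleted edges. Since the $|Q_j|$ are non-increasing and sum to $n$, we have $|Q_j| \leq n/j \leq n/k$ for all $j \geq k$, hence
\[
\sum_{j=k}^m \binom{|Q_j|}{2} \;\leq\; \tfrac{1}{2} \max_{j \geq k}|Q_j| \cdot \sum_{j=k}^m |Q_j| \;\leq\; \tfrac{1}{2} \cdot \tfrac{n}{k} \cdot n \;=\; \tfrac{n^2}{2k} \;\leq\; \tfrac{\eps}{2}\,n^2.
\]
Hence $\Delta(G,\calP_\phi) \leq \eps/2$, as required. There is no real obstacle here; the only subtle design choice is using one independent-set part rather than only clique parts, since otherwise absorbing many small cliques would require either merging them (adding too many edges) or leaving them as singletons that cannot share a part. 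Combining this lemma with Corollary~\ref{coro:cover-of-shpp} then yields Theorem~\ref{thm:p3_is_EE}.
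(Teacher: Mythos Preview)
Your proof is correct and follows essentially the same approach as the paper: the same single homogeneous partition function (clique parts plus one independent-set ``sink''), the same use of the cluster-graph characterization of $P_3$-free graphs, and the same edge-deletion argument bounding $\sum_{j\geq k}\binom{|Q_j|}{2}$ via $\max_{j\geq k}|Q_j|\leq n/k$. The only cosmetic difference is the choice of constant ($k=\lceil 2/\eps\rceil$ versus the paper's $k=1/\eps+1$).
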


We use the following known 
characterization of $P_3$-free graphs.
\begin{fact}\label{fact:p3-cliques}
A graph  is $P_3$-free if and only if it is a disjoint union of cliques, i.e., each of its connected components is a clique.
\end{fact}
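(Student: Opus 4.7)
The plan is to prove Fact~\ref{fact:p3-cliques} directly from the definitions, handling each direction of the equivalence separately. Both directions are short, so the main challenge is not difficulty but pinning down the right minimal argument; the ``only if'' direction is slightly more delicate because it requires a shortest-path trick.

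For the \emph{if} direction, I would assume $G$ is a disjoint union of cliques and argue by contradiction: suppose $G$ contains an induced $P_3$ on vertices $u,v,w$ with edges $(u,v)$ and $(v,w)$ but no edge $(u,w)$. Since $(u,v) \in E$, the vertices $u$ and $v$ lie in the same connected component, and hence (by assumption) in the same clique; similarly $v$ and $w$ lie in the same clique. Thus $u,v,w$ all belong to a common clique, which forces $(u,w) \in E$, contradicting the induced $P_3$ structure.

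For the \emph{only if} direction, I would assume $G = (V,E)$ is induced $P_3$-free and show each connected component is a clique. Fix a connected component $C$ and any two distinct vertices $u, w \in C$. Let $u = v_0, v_1, \ldots, v_k = w$ be a \emph{shortest} path between $u$ and $w$ in $G[C]$, which exists by connectedness. If $k = 1$, then $(u,w) \in E$ and we are done. If $k \geq 2$, then consider the triple $v_0, v_1, v_2$. By minimality of the path length, $(v_0, v_2) \notin E$ (otherwise we could shortcut), while $(v_0, v_1), (v_1, v_2) \in E$ by construction. Hence $\{v_0, v_1, v_2\}$ induces a $P_3$ in $G$, contradicting the assumption. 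So $k = 1$ for every pair in $C$, i.e., $C$ is a clique.

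The main obstacle, if any, is simply recognizing the shortest-path trick in the second direction, since otherwise one might try to induct on component size and get bogged down. Given the elementary nature of the statement, I would likely present the proof in a compact paragraph or two and, since it is labeled as a fact with standard attribution, possibly omit the proof entirely or cite a textbook reference.
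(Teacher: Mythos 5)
Your proof is correct in both directions. Note that the paper does not actually prove Fact~\ref{fact:p3-cliques}: it is stated as a known characterization and used without proof, so there is no argument in the paper to compare against. Your two-direction argument --- the common-clique observation for the ``if'' direction and the shortest-path (or equivalently, the observation that any non-adjacent pair joined by a path of length two yields an induced $P_3$) for the ``only if'' direction --- is the standard elementary proof and is exactly what one would supply if the fact were to be proved in full.
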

	
\begin{proof}[Proof of Lemma~\ref{lem:p3-cover}]
Let $k = 1/\eps +1$. We define a partition function $\phi_{k} : [k]\times [k]$ as follows.
For each $i \in [k-1]$, $\phi_{k}(i,i) = 1$, and $\phi_{k}(k,k) = 0$. For each $i,j \in [k]$ such that $i \neq j$,
$\phi_{k}(i,i) = 0$. Thus $\calP_{\phi_{k}}$ consists of all graphs that are a union of at most $k-1$ cliques and an independent set (equivalently, all connected components are cliques, and at most $k-1$ of these cliques contain more than a single vertex). We let $\calF(\eps) = \{\calP_{\phi_{k}}\}$ and show next that $\calF(\eps)$ is an $\eps$-cover for  $P_3$-freeness.

Starting with Condition~\ref{it:G-in-P} in Definition~\ref{def:cover}, consider any graph $G$ that is $P_3$-free.
By Fact~\ref{fact:p3-cliques}, it is a disjoint union of cliques. Let the vertex sets of these cliques be $V_1,\dots,V_t$, where $|V_1|\geq |V_2|\geq \dots \geq |V_t|$. Consider the graph $G'$ that results from $G$ by removing all edges internal to the sets $V_k,\dots,V_t$ (thus turning $\bigcup_{j=k}^t V_j$ into an independent set). This graph belongs to $\calP_{\phi_{k}}$ and
is $(\eps/2)$-close to $G$ (since $|V_j| < \eps n$ for each $j \geq k$, so that for each vertex, the number of edges incident to it that are removed is less than $\eps n$  and each edge is counted twice).

Turning to  Condition~\ref{it:Gprime-in-union} in Definition~\ref{def:cover}, let $G'$ be a graph that belongs to  $\calP_{\phi_{k}}$. By the definition of $\calP_{\phi_{k}}$, all connected components of $G'$ are cliques (some of size $1$). By Fact~\ref{fact:p3-cliques}, $G'$ is $P_3$-free.
\end{proof}

Theorem~\ref{thm:p3_is_EE} now directly follows by combining 
Corollary~\ref{coro:cover-of-shpp} and Lemma~\ref{lem:p3-cover}.


\section{Induced $P_4$-freeness}\label{sec:p4}

In this section we prove  Theorem~\ref{thm:p4_is_EE} (with some details deferred to Appendix~\ref{app:p4}).
For the sake of succinctness, in what follows we refer to induced $P_4$-freeness
as $P_4$-freeness.

In order to prove Theorem~\ref{thm:p4_is_EE}
we establish the next lemma (which will then allow us to apply 
Corollary~\ref{coro:cover-of-shpp}.
\begin{lemma}\label{lem:p4-cover}
For every $\eps>0$, there exists a family of
 graph properties $\calF(\eps) \subset \calHPP^k$, for 
 $k = O(1/\eps)$,
 such that
   $\calF(\eps)$ is an $\eps$-cover for $P_4$-freeness and $|\calF(\eps)| =2^{O(1/\eps^2)}$.
\end{lemma}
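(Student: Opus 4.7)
The plan is to exploit the classical cotree representation of induced $P_4$-free graphs (cographs): every such $G$ admits a rooted tree $T_G$ whose leaves are in bijection with $V(G)$ and whose internal nodes carry labels in $\{0,1\}$ (``union'' and ``join''), such that $uv \in E(G)$ iff the LCA of $u,v$ in $T_G$ is labeled $1$. The covering family $\calF(\eps)$ will consist of the homogeneous partition properties $\calP_{\phi_T}$, one for each labeled rooted tree $T$ having at most $k = O(1/\eps)$ leaves, where each leaf carries an index in $[k]$ together with a label in $\{0,1\}$ and each internal node carries a label in $\{0,1\}$. The partition function $\phi_T \in \Phi^k$ is defined by $\phi_T(i,j) = $ label of the LCA of leaves $i,j$ in $T$ (for $i \neq j$) and $\phi_T(i,i) = $ label of leaf $i$. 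Counting tree shapes, leaf indexings, and $\{0,1\}$-labelings gives $|\calF(\eps)| \le 2^{O(k \log k)} \le 2^{O(1/\eps^2)}$.

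Condition~\ref{it:Gprime-in-union} of Definition~\ref{def:cover} is immediate: any graph obeying $\phi_T$ is obtained by substituting a clique or an independent set into each leaf of the cograph encoded by $T$, and cographs are closed under substitution by cliques and independent sets, so such a graph is itself $P_4$-free and its distance to $P_4$-freeness is $0 < \eps/2$.

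For Condition~\ref{it:G-in-P}, given a cograph $G$ with cotree $T_G$, I would construct a small tree $T'$ with $O(1/\eps)$ leaves together with a partition $V(G) = \bigsqcup_i U_i$ indexed by the leaves of $T'$, such that (i) each cross-cut $(U_i, U_j)$ is homogeneous in $G$ with density $\phi_{T'}(i,j)$, and (ii) the total within-part edges that must be added or deleted to force each $U_i$ to be a clique or independent set (as dictated by $\phi_{T'}(i,i)$) sum to at most $\eps n^2 / 2$. The construction marks as \emph{heavy} those internal nodes of $T_G$ whose subtree contains more than a $\Theta(\eps)$ fraction of $V(G)$, extracts an upper skeleton consisting of these heavy nodes (suitably truncated to keep $O(1/\eps)$ leaves), and assigns every vertex of $G$ to the leaf of $T'$ indicated by its path through the skeleton together with which union/join side it falls on under each internal skeleton node; since the relation between $U_i$ and $U_j$ is governed entirely by the label of the LCA of their respective positions in the skeleton, the cross-cuts are homogeneous by construction.

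The principal obstacle is designing the pruning so that both $|T'|$ has $O(1/\eps)$ leaves \emph{and} the cross-cuts are in fact homogeneous in $G$. Naively truncating a long chain of heavy nodes (which can occur --- consider for example a comb-shaped cotree with strictly alternating labels, whose entire heavy skeleton is a path) collapses too much structure into a single leaf and incurs $\Omega(1)$ homogenization cost; the repair is to also split the truncated portion of the chain by chain-local parity so that each leaf of $T'$ contains only vertices whose ancestor-label pattern in $T_G$ is consistent, after which the blow-up of $T'$ recovers the correct LCA-based adjacency on cross-cuts up to an $O(\eps / k)$-density mismatch within each of the $O(1/\eps)$ aggregated blocks. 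Summing these mismatches yields total modification cost $\le \eps n^2 / 2$, and the leaf-count bound then follows from the choice of truncation depth; verifying both bounds is a direct calculation once the scheme is pinned down.
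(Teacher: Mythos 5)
Your proposal follows essentially the same route as the paper: both use the cotree characterization of cographs, prune the cotree to a skeleton with $O(1/\eps)$ leaves (handling long alternating heavy chains by splitting the collapsed side-subtrees according to the label of their attachment node, exactly the paper's Phase~II with $W_0(p)$ and $W_1(p)$), take the cover to be all labeled trees on $O(1/\eps)$ leaves, and use closure of cographs under substitution for Condition~\ref{it:Gprime-in-union}. The only cosmetic difference is that you allow clique-labeled leaves while the paper forces every part to be an independent set; both yield the stated bounds.
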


In order to define the family $\calF(\eps)$ referred to in Lemma~\ref{lem:p4-cover}, we make use of
a known characterization of $P_4$-free graphs that is stated next, where for 
a node  $y$ in a tree $T$, we let $L_T(y)$ denote the set of leaves in the subtree rooted at $y$.

	\begin{lemma} 
              \label{lem:p4-to-tree}
		Let $G$ be a graph with vertex set $V$. The following statements are equivalent:
		\begin{enumerate}
			\item $G$ has no induced subgraph isomorphic to $P_4$.
			\item \label{it:T-G}
             There exists a binary tree, denoted $T(G)$, for which the following holds.
             \begin{enumerate}
               \item There is a one-to-one correspondence between the leaves of $T(G)$ and the vertices of $G$.\footnote{For the sake of simplicity, we think of the leaves of $T(G)$ as actually being vertices of $G$ (as can be seen in the next item).}
               \item Each internal node $x$ in $T(G)$ corresponds to a homogeneous cut $(U_1,U_2)$,
               where the two children of $x$, denoted $x_1$ and $x_2$, respectively, satisfy $L_{T(G)}(x_1) = U_1$ and $L_{T(G)}(x_2)=U_2$.
             \end{enumerate}
		\end{enumerate}
	\end{lemma}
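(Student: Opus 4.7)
\medskip

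\noindent\textbf{Proof plan for Lemma~\ref{lem:p4-to-tree}.} The plan is to prove the two implications separately, with the forward direction (existence of $T(G)$ implies $P_4$-freeness) being straightforward case analysis and the backward direction proceeding by induction on $|V|$ together with the classical cograph structural result of Seinsche.

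\emph{The easy direction:} Suppose a tree $T(G)$ as described exists, and assume towards contradiction that there is an induced $P_4$ in $G$ on vertices $v_1, v_2, v_3, v_4$ (with edges $v_1 v_2, v_2 v_3, v_3 v_4$ and no other edges). Let $x$ be the lowest common ancestor of these four leaves in $T(G)$, and let $(U_1, U_2)$ be the homogeneous cut associated with $x$. By the choice of $x$, at least one of the $v_i$'s lies in $U_1$ and at least one lies in $U_2$. If $(U_1, U_2)$ is empty, I would use the fact that each of the three $P_4$-edges forces its endpoints to be on the same side, which collapses all four vertices to a single side, contradicting the LCA property. If $(U_1, U_2)$ is complete, then the absence of edges $v_1 v_3$, $v_1 v_4$, and $v_2 v_4$ forces $\{v_1, v_3\}$, $\{v_1, v_4\}$, and $\{v_2, v_4\}$ each to lie entirely on one side of the cut; tracing these constraints similarly collapses all four vertices to one side.

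\emph{The hard direction:} I would proceed by induction on $|V|$, the base cases $|V| \le 2$ being immediate. For the inductive step, the crux is the following classical fact (Seinsche's theorem): if $G$ is a $P_4$-free graph with $|V| \ge 2$, then either $G$ is disconnected or its complement $\bar G$ is disconnected. I would establish this by taking a connected $P_4$-free $G$, considering a BFS layering from any vertex, and arguing that any two vertices in non-adjacent layers together with a shortest connecting path would produce an induced $P_4$ unless the complement falls apart into layers, so $\bar G$ is disconnected. Given this dichotomy: if $G$ is disconnected, partition $V$ into two nonempty unions of connected components $U_1, U_2$, making $(U_1,U_2)$ an empty cut; if $\bar G$ is disconnected, do the analogous split to obtain a complete cut $(U_1, U_2)$. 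In either case, $G[U_1]$ and $G[U_2]$ are induced subgraphs of $G$, hence still $P_4$-free, so by induction each has its own tree, and we attach these as the two subtrees under a new root labeled by the appropriate homogeneous cut.

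\emph{Binarization.} The recursion above naturally produces a binary tree since we split into exactly two parts at each step. The only subtlety is when $G$ has more than two connected components (or $\bar G$ does): here I would simply choose any nontrivial bipartition of the components into two nonempty groups $U_1, U_2$, which still yields a homogeneous cut of the same type, and recurse. The main obstacle of the whole argument is Seinsche's dichotomy; once that is in hand, both the tree construction and the verification that it satisfies the two conditions of Item~\ref{it:T-G} are routine. The uniqueness and canonical form of the cotree are not needed for this lemma, so I would not pursue them.
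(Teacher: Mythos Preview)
Your overall plan coincides with the paper's: both directions hinge on Seinsche's dichotomy (for a $P_4$-free graph on at least two vertices, either $G$ or $\bar G$ is disconnected), and the tree is built by recursively splitting along the resulting homogeneous cut. The paper quotes this as a black box from~\cite{Sei74} (stated there in the equivalent form ``for every $U\subseteq V$, $G[U]$ or $\overline{G}[U]$ is disconnected'') and then shows the equivalence with the tree condition via the same LCA argument you use for the easy direction.

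The one genuine gap is your sketch of Seinsche's theorem itself. The BFS argument you describe only shows that a connected $P_4$-free graph has diameter at most~$2$; it does \emph{not} follow that $\bar G$ is disconnected. For a concrete obstruction: take $V=\{v,a,b,c\}$ with edges $va,vb,ab,ac$. This is connected, $P_4$-free, and BFS from $v$ gives layers $\{v\},\{a,b\},\{c\}$ with $c$ non-adjacent to $b\in L_1$, so ``every $L_2$-vertex is adjacent to every $L_1$-vertex'' fails, and no $P_4$ appears along the layering. (Here $\bar G$ is disconnected because $a$ is universal, but the BFS layering does not reveal this.) The standard proofs of Seinsche's dichotomy proceed instead by induction on $|V|$ (remove a vertex, apply the hypothesis to $G-v$ or $\overline{G-v}$, and use $P_4$-freeness to force the removed vertex to be universal in $G$ or in $\bar G$) or via modular decomposition. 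Either cite the result, as the paper does, or replace the BFS sketch with one of these arguments; the rest of your plan is sound.
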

Lemma~\ref{lem:p4-to-tree} was implicitly proved in~\cite{CLS81}
and we provide the proof in Appendix~\ref{app:p4}  for the sake of completeness.
We note that for a graph $G$, there may be more than one corresponding tree $T(G)$, but $T(G)$ uniquely determines $G$.

Using Lemma~\ref{lem:p4-to-tree} 
we can establish the next lemma (which will be used in the proof of Lemma~\ref{lem:p4-cover}).
\begin{lemma}\label{lem:close-tree}
Let $G = (V,E)$ be a graph that is $P_4$-free.
For any given $\eps >0$, there exists a graph $G' = (V,E')$ that is $(\eps/2)$-close to $G$ for which the following holds.
There is a tree $T(G')$ as defined in Item~\ref{it:T-G} of Lemma~\ref{lem:p4-to-tree}, which includes a set $C$ of $O(1/\eps)$ internal nodes in $T(G')$ such that:
\begin{enumerate}
\item  Every leaf in $T(G')$ (vertex in $V$) belongs to $L_{T(G')}(x)$ for some $x \in C$.
\item All nodes in $C$ and internal nodes descending from them correspond to empty cuts.
\end{enumerate}
\end{lemma}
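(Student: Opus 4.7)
Lemma~\ref{lem:p4-to-tree} gives us a cotree $T(G)$ for the $P_4$-free graph $G$, and my plan is to decompose $T(G)$ into a small number of subtrees whose leaves will become independent-set classes in $G'$, and then obtain $T(G')$ from $T(G)$ by relabeling internal nodes inside these subtrees. Set $\ell=\lceil\eps n/2\rceil$. I will use the following combinatorial fact (provable by a simple induction on the tree): any binary tree with at least $\ell$ leaves has an internal node $y$ with $|L(y)|\in[\ell/2,\ell]$ that has a sibling, so that removing $y$'s subtree and contracting $y$'s parent yields another binary tree. Starting with $T'=T(G)$ and $C=\emptyset$, while $T'$ has $\ge \ell$ leaves I find such a $y$, add it to $C$, and prune; once $T'$ has fewer than $\ell$ leaves I add its root to $C$. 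Since each iteration removes at least $\ell/2$ leaves, $|C|\le 2n/\ell+1=O(1/\eps)$.

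For each $x\in C$ let $V_x$ be the set of leaves in $x$'s subtree at the moment $x$ is added, so $|V_x|\le\ell$ and $\{V_x\}_{x\in C}$ partitions $V$. I define $T(G')$ from $T(G)$ by relabeling as ``union'' (empty cut) every internal node whose entire leaf-set is contained in some single $V_x$, while leaving the labels of all other (``top'') internal nodes as in $T(G)$. Each subtree corresponding to a $V_x$ then has only empty cuts, so $V_x$ becomes an independent set in the graph $G'$ defined by $T(G')$, while the edges of $G'$ between different $V_x$'s are inherited from the unchanged homogeneous cuts at the top of the tree. By Lemma~\ref{lem:p4-to-tree}, $G'$ is $P_4$-free, and the only edges in $E(G)\SymDif E(G')$ are intra-class, so
\[
|E(G)\SymDif E(G')|\;\le\; \sum_{x\in C}\binom{|V_x|}{2}\;\le\; \frac{\ell}{2}\sum_{x\in C}|V_x|\;=\;\frac{\ell n}{2}\;\le\;\frac{\eps n^2}{4},
\]
which gives $\Delta(G,G')\le \eps/2$, as required. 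Taking $C$ and $T(G')$ as above satisfies both conditions of the lemma.

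The main technical obstacle, which I expect to require the most care, is that the iterative procedure may select two nodes $y,y'$ that are in an ancestor--descendant relation in the \emph{original} $T(G)$: the claim above only guarantees $y$ has a sibling in the current pruned tree, not that $y$ is pairwise incomparable (in $T(G)$) with the previously chosen nodes. When this happens, the set $V_x$ associated with the ancestor is not exactly a subtree of $T(G)$ but a subtree with some previously-extracted pieces removed, and consequently the cuts of $G$ between such nested $V_x$'s may fail to be homogeneous. I would address this either by strengthening the selection rule so that the chosen nodes are pairwise incomparable in $T(G)$ (for example by always picking an eligible node whose entire $T(G)$-subtree is still intact, and verifying this is always achievable), or by restricting $C$ to its maximal pairwise-incomparable elements and arguing that the relabeling of $T(G)$ still yields a valid cotree for $G'$, because each ``top'' internal node of $T(G')$ then corresponds to a genuine homogeneous cut of $G$ between the class-unions it separates.
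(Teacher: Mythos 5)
Your selection procedure and your edge count are essentially the paper's Phase~I (the paper uses the same extract-a-subtree-with-$[\ell/2,\ell]$-leaves claim, with $\ell=\Theta(\eps n)$, and requires each new node not to be an ancestor of a previously chosen one). The gap is exactly the obstacle you flag at the end, and neither of your two proposed fixes works. Consider a caterpillar cotree: a spine $z_1,z_2,\dots$ from the root downward, where each $z_r$ has one leaf child $u_r$ and one child $z_{r+1}$, with arbitrary labels on the $z_r$. The internal nodes of $T(G)$ with between $\ell/2$ and $\ell$ descendant leaves are a nested chain at the bottom of the spine, so any pairwise-incomparable selection with small leaf sets covers at most $\ell$ leaves; the remaining leaves $u_r$ hang off the spine as singletons, and no node of $T(G)$ has their union as its leaf set, so no mere relabeling of $T(G)$ can produce a set $C$ satisfying Condition~1. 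Restricting $C$ to maximal incomparable elements fails for the opposite reason: the last node you add is the root of the pruned tree, whose subtree in $T(G)$ is all of $V$, so making everything below it empty modifies $\Theta(n^2)$ edges. There is also a second problem with relabeling alone: in the caterpillar, $u_i$ is adjacent to all of $\{u_j : j>i\}$ or to none of them according to the label of $z_i$, so the cut between two consecutive blocks of hanging leaves is a half-graph-like structure and is not homogeneous in $G$; turning each block into an independent set while keeping the old ``top'' cuts therefore does not yield a graph admitting a valid tree as in Lemma~\ref{lem:p4-to-tree}.

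The missing idea is that $T(G)$ must be restructured, not just relabeled. The paper's Phase~II takes each non-branching path that remains between the Phase~I nodes (subdivided so that the leaves hanging off it number at most $\frac{\eps}{4}n$), splits the hanging subtrees into those attached at empty-labelled spine nodes and those attached at complete-labelled spine nodes, and contracts the path to two new internal nodes---one empty, one complete---each carrying one of the two groups as a freshly built independent-set subtree that is added to $C$. Grouping by the label of the attachment point is what makes each new class homogeneous to the rest of the graph, and this regrouping-and-contraction step is what your proposal is missing.
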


\def\ProofOfLemCloseTree{
In order to prove Lemma~\ref{lem:close-tree}, we first establish
the following simple combinatorial claim.
\begin{claim} \label{clm:leaf_count_of_subtree}
	For every integer $\ell > 0$, every binary tree $T$ with $n\geq \ell$ leaves has an internal node $y$ such that $\frac{\ell}{2} \leq  |L_T(y)| \leq \ell$. Furthermore, $y$ has a sibling (its parent in the tree has an additional child).
\end{claim}
	\begin{proof} 
Since $T$ is fixed in the proof, we use the shorthand $L(y)$ instead of $L_T(y)$.
		We traverse the tree, starting from the root, in the following way:
		\begin{enumerate}
			\item if we are at node $y$, such that $|L(y)| > \ell$, and $y$ has a single child $y'$, we continue to $y'$. Note that $|L(y')| = |L(y)| \geq \ell$.
			\item if we are at node $y$, such that $|L(y)| > \ell$, and $v$ has two children, at least one of them has at least $|L(y)|/2 \geq \ell/2$ leaves. We continue to it.
			\item if we are at node $y$, such that $|L(y)| \leq \ell$, we return $y$.
		\end{enumerate}
		It is easy to see that we only stop at nodes $y$ s.t $\ell/2 \leq |L(y)| \leq \ell$.
		To see that we eventually stop, note that we start from the root (which has $n > \frac{\ell}{2}$ leaves), and in each step we increase our distance from the root. Therefore, we necessarily eventually stop.
		To see that we stop at a node with a sibling, observe that we stop at the first node we reach for which $L(y) \leq \ell$, and only when moving to a node with a sibling we decrease $L(y)$.
	\end{proof}

\medskip

\begin{proof}[Proof of Lemma~\ref{lem:close-tree}]
We first apply Lemma~\ref{lem:p4-to-tree}, and let $T(G)$ be a tree as defined in Item~\ref{it:T-G} of the theorem.
We next show how to modify  $T(G)$ so as to obtain a tree $T(G')$ for a graph $G'$ that is $(\eps/2)$-close to $G$.
This is done in two phases. In the first phase we do not modify the structure of $T(G)$ but only modify the type of cuts corresponding to some of the internal nodes. In the second phase we also modify the structure of the tree (over the same set of leaves).

\paragraph{Phase I.}
In the first phase we apply Claim~\ref{clm:leaf_count_of_subtree} repeatedly with $\ell = \frac{\eps}{4} n$, to find nodes in $T(G)$ that have between $\frac{\eps}{8} n$ and $\frac{\eps}{4} n$ descending leaves.
Specifically, starting with $j=1$ and  $T^1 = T(G)$, 
in  iteration $j$ we search for a node $x_j$ such that $x_j$ is not an ancestor of any previously selected $x_i$ ($i<j$),  and $\frac{\eps}{8} n \leq |L_{T(G)}(x_j)| \leq \frac{\eps}{4} n$.
If such a node $x_j$ exists, then 
we add $x_j$ to the set of selected internal nodes $C$ and modify $T^j$ as follows. For $x_j$ and all its descending internal nodes, the cuts associated with them are made empty.
This implies that in the graph determined by $T^j$ there are no edges between vertices with both endpoints in $X_j = L_{T(G)}(x_j)$.
%

If we reached an iteration $j$ such that every leaf in $T(G)$ (similarly, $T^j$) is a descendant of some selected $x_i$, $i<j$, then we
set $T(G') = T^j$ and terminate. Observe that in this case
 $|C|  \leq 8/\eps$ and $G'$ differs from $G$ only on pairs of vertices that both belong to the same $X_j$, where there are at most $\frac{\eps}{4}n^2$ such edges.
Otherwise we begin the second phase, as explained next.

\paragraph{Phase II.}
For each node $x_i$, $i =1,\dots,j-1$, let $y_i$ be its parent in $T(G)$ (which is also its parent in $T^j$).
  Let $\{y_j,\dots,y_{j'}\}$ be the set of least common ancestors for all pairs of vertices among $\{y_1,\dots,y_{j-1}\}$
  (so that in particular, this includes the root of $T(G)$).
Let $Y = \{y_1,\dots,y_{j'}\}$ and observe that $|Y| \leq j' \leq 2j \leq 16/\eps$.
Consider the set $S$ of  edges-disjoint paths between pairs of nodes in $Y$ such that no node in $Y$ is internal to any of these paths and each path contains at least three nodes.

  Let $(z_1,\dots,z_t)$ be a path in $S$, so that $t\geq 3$ and $\{z_1,\dots,z_t\} \cap Y = \{z_1,z_t\}$.  The order of the nodes on the path is from minimum to maximum distance to the root.
   Recall that in $T(G)$, every internal node has two children. Therefore, for each $2 \leq r \leq t-1$, the node $z_r$ has a child $w_r \neq z_{r+1}$.
Furthermore, $|L_{T(G)}(w_r)| \leq \frac{\eps}{8} n$ for every  $2 \leq r \leq t-1$,  (or else $w_r$ (or one of its descendants) would have been selected as the next $x_j$).

Assume first that $\sum_{r=2}^{t-1} |L_{T(G)}(w_r)| \leq  \frac{\eps}{4} n$.
Let $p = (z_2,\dots,z_{t-1})$ and $Z(p) = \{z_2,\dots,z_{t-1}\}$. Let $Z_0(p)$ be the subset of nodes in $Z(p)$ that are associated with an empty cut in $T(G)$ (the same holds for $T^j$) and let $Z_1(p)$ be the subset associated with a complete cut. Let $W_0(p)$ be the children of nodes in $Z_0(p)$ that do not belong to $Z(p)$, and let $W_1(p)$ be defined analogously for $Z_1(p)$. We now modify $T^j$ as follows. We replace the path $(z_2,\dots,z_{t-1})$ with a single edge between two nodes, denoted 
$z^{0}(p)$ and $z^{1}(p)$, respectively.
We let $z^{0}(p)$ be the parent of $z^{1}(p)$ (and we have an edge between $z_1$ and $z^{0}(p)$ and an edge between $z^{1}(p)$ and $z_t$). The node $z^{0}(p)$ is associated with an empty cut, and the node $z^{1}(p)$ with a complete cut. We next add a child, $w^{0}(p)$ to $z^{0}(p)$ and a child $w^{1}(p)$ to to $z^{1}(p)$ where $w^{0}(p)$ is the root of a subtree whose leaves are $\bigcup_{w\in W_0(p)} L_{T(G)}(w)$ and $w^{1}(p)$ is the root of a subtree whose leaves are $\bigcup_{w\in W_1(p)} L_{T(G)}(w)$. All internal nodes in these two subtrees are associated with empty cuts, and we add $w^{0}(p)$ and $w^{1}(p)$
to the set of selected internal nodes $C$.  (If either $Z_0$ or $Z_1$ is empty, then we have only one subtree. If $|W_0(p)|=1$, then $w^0(p)$ is a leaf, corresponding to the single vertex in $W_0(p)$ (so that it is not added to $C$), and a similar statement holds for $W_1(p)$.)

If $\sum_{r=2}^{t-1} |L_{T(G)}(w_r)| >  \frac{\eps}{4} n$, then we do the following. We first partition the path $(z_2,\dots,z_{t-1})$ into a minimal number of smaller node-disjoint sub-paths such that for each smaller  sub-path the number of leaves descending from nodes on the sub-path is at most $\frac{\eps}{4} n$. We then apply to each sub-path $p'$ the same ``contraction'' operation as defined above for $(z_2,\dots,z_{t-1})$ (and add $w^0(p')$ and $w^1(p')$ to $C$ for each sub-path $p'$).

Consider applying the above to all paths in $S$ and let $\wtT$ be the resulting tree.
By the construction of $\wtT$, the graph $G'$ for which $\wtT = T(G')$ differs from $G$ only on pairs of vertices that both belong to a common subset $W(p') = W_0(p') \cup W_1(p')$ as defined above (here $p'$ may be a path $p$ as defined above, or a sub-path of $p$).
Since the total number of leaves that descend from the nodes in each $W(p')$ is at most $\frac{\eps}{4} n$, the distance between $G'$ and $G$ is as required.
On the other hand, since for every node $x_j$ selected in the first phase, $|L_{T(G)}(x_j)| = |L_{\wtT}(x_j)| \geq \frac{\eps}{8}n$, and for at least a quarter of the nodes $w^b(p')$ (for $b \in \{0,1\}$) we have that
$|L_{\wtT}(x_j)| \geq \frac{\eps}{4}n$, we get that
 $|C|\leq 16/\eps$.
\end{proof}

} 
\ifnum\fullver=0
The proof of Lemma~\ref{lem:close-tree} is somewhat technical, and is deferred to Appendix~\ref{app:p4}.
\else

\ProofOfLemCloseTree
\fi

\medskip
We are now ready to prove Lemma~\ref{lem:p4-cover}, where we make use of the following notation.
For any two 
nodes $y_1$ and $y_2$ in a tree $T$, their lowest common ancestor is denoted by $\anc_T(y_1, y_2)$.

\begin{proof}[Proof of Lemma~\ref{lem:p4-cover}]
In order to define $\calF(\eps)$, we define a set of homogeneous partition functions $\Phieps$ and let $\calF(\eps) = \{\calP_\phi: \phi \in \Phieps\}$.
Let $k = 32/\eps$. Each $\phi \in \Phieps$ is determined by: (1) a binary tree $T = (V_T,E_T)$ with 
$k' \leq k$ leaves, whose set is denoted by $L_T$, such that each internal node in $T$ has two children;
 (2) a Boolean function $\beta$ over $V_T$ such that $\beta(w)=0$ for every leaf $w \in L_T$; (3)
 a function $\iota : L_T \to [k']$.
The function $\iota$ simply defines a labeling of the leaves of $T$, and the function $\beta$ will be used to determine
cut-types.

The function $\phi_{T,\beta,\iota}$ is defined as follows.
For every $i , i' \in [k']$ we let
$\phi_{T,\beta,\iota}(i,i')=\beta(\anc_T(\iota^{-1}(i),\iota^{-1}(i')))$.
Observe that $\phi_{T,\beta,\iota}(i,i)=0$ for every $i \in [k']$ (so that all parts correspond to independent sets),
while for $i\neq i'$,
the value of $\beta$ on the lowest common ancestor of the leaves mapped by $\iota$  to $i$ and $i'$, respectively,
determines whether the cut between part $i$ and part $i'$ is empty or complete.

As stated above, we let $\calF(\eps)$ contain all graph properties in $\calHPP^k$ defined by 
homogeneous partition functions $\phi_{T,\beta,\iota}$ in $\Phieps$. Since $|\Phieps| \leq 2^{k^2}$, we get the same upper bound on $|\calF(\eps)|$.
It remains to show that $\calF(\eps)$ is an $\eps$-cover of $P_4$-freeness (as defined in Definition~\ref{def:cover}).

To establish that Condition~\ref{it:G-in-P} in Definition~\ref{def:cover} holds, consider any graph $G$ that is $P_4$-free.
We show that there exists $\phi \in \Phieps$ such that $G$ is $(\eps/2)$-close to $\calP_\phi$.
To this end we apply Lemma~\ref{lem:close-tree}. Based on $T(G')$ we define a homogeneous partition function $\phi_{T,\beta,\iota} \in \Phieps$ in a straightforward manner.
Let $T$ be the tree resulting from $T(G')$ by replacing each internal node $x$ in $C$ with a leaf. The function $\iota$ may be an arbitrary function from the leaves of $T$ to $[|C|]$. The function $\beta$ assigns value $0$ to all leaves, and to each internal node it assigns value $0$ or $1$ depending on the type of cut associated with the node in $T(G')$. By this definition, $G'$ obeys $\phi_{T,\beta,\iota}$, as required.

We now turn to  Condition~\ref{it:Gprime-in-union} in Definition~\ref{def:cover}. Consider any $\phi \in \Phieps$, and let $T(\phi)$, $\beta(\phi)$ and $\iota(\phi)$ be such that $\phi = \phi_{T(\phi),\beta(\phi),\iota(\phi)}$.
Let $G' = (V,E')$ be a graph in $\calP_{\phi}$. We show that $G'$ is $P_4$-free by applying Lemma~\ref{lem:p4-to-tree}.
Namely, we show that Item~\ref{it:T-G} in the lemma holds. Let $(V_1,\dots,V_k)$ be a witness partition of $V$ with respect to $\phi$. 
This implies that we can define a tree $T(G')$ as described in
Item~\ref{it:T-G} of Lemma~\ref{lem:p4-to-tree} by essentially ``extending'' $T(\phi)$. To be precise, for each internal node
$x\in T(\phi)$ we have an internal node (also denoted $x$) in $T(G')$. Let $y$ and $z$ be the  children of $x$ in $T(\phi)$. Then in $T(G')$ the cut corresponding to $x$ is between
$\bigcup_{w\in L_{T(\phi)}(y)} V_{\iota(w)}$ and
$\bigcup_{w\in L_{T(\phi)}(z)} V_{\iota(w)}$
(which is either empty or complete, by the definition of $\phi$). For each leaf $x$ in $T(\phi)$,
$V_{\iota(x)}$ is an independent set. 
In $T_{G'}$ we  replace the leaf $x$ with some subtree whose leaves correspond to vertices in $V_{\iota(x)}$. Each internal node in this subtree corresponds to an empty cut between the vertices associated with the leaves descending from its left child and the leaves descending from its right child.
We have thus obtained a tree $T(G')$ as defined in Item~\ref{it:T-G} of Lemma~\ref{lem:p4-to-tree}, implying that $G'$ is $P_4$-free, and hence Item~\ref{it:Gprime-in-union} in Definition~\ref{def:cover} holds, as required.
	\end{proof}

Theorem~\ref{thm:p4_is_EE} now directly follows by combining 
Corollary~\ref{coro:cover-of-shpp}
and Lemma~\ref{lem:p4-cover}.

\section{Induced $C_4$-freeness}\label{sec:c4}

In this section we prove Theorem~\ref{thm:c4_is_EE} (with some details deferred to Appendix~\ref{app:c4}).
In order to prove Theorem~\ref{thm:c4_is_EE}
we establish the next lemma.
\begin{lemma}\label{lem:c4-cover}
For every $\eps>0$, there exists a family of
 semi-homogeneous partition properties $\calF(\eps) \subset \tcalHPP^k$, for 
 $k = \exp(\exp(\poly(1/\eps)))$,
 such that
   $\calF(\eps)$ is an $\eps$-cover for induced $C_4$-freeness and
   $|\calF(\eps)| = \exp(\exp(\poly(1/\eps)))$.
\end{lemma}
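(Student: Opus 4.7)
The plan is to combine the structural approximation for induced $C_4$-free graphs sketched in Section~\ref{subsubsec:intro-C4} (following~\cite{GS19}) with an explicit family of semi-homogeneous partition functions indexed by ``meta-graphs'' on a small index set.

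First, I would establish that every induced $C_4$-free graph $G=(V,E)$ is $(\eps/4)$-close to an induced $C_4$-free graph $G'=(V,E')$ whose vertex set admits a partition $V = I \cup Q_1 \cup \cdots \cup Q_t$ in which each $Q_i$ is a clique of $G'$, $I$ is an independent set of $G'$, every cut $(Q_i,Q_j)$ with $i\neq j$ is homogeneous, and $N_{G'}(y)$ induces a clique for every $y\in I$. Beyond citing the structural lemma, I would include a cleanup that absorbs clique parts of size below $\Theta(\eps)\cdot n / T$ (for $T=\exp(\poly(1/\eps))$) into $I$, by deleting their internal edges together with any incident edges that would violate the clique-neighborhood property for the resulting $I$-vertices. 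This bounds the number $t$ of remaining cliques by $T=\exp(\poly(1/\eps))$ at the cost of at most an additional $\eps/4$ edge modifications.

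Second, I would build the cover. For each induced $C_4$-free graph $H$ on vertex set $[t]$, construct a partition function $\phi_H$ on $k=t+2^t$ parts as follows. The first $t$ ``clique parts'' have $\phi_H(i,i)=1$, and for $i\neq j$ in $[t]$, $\phi_H(i,j)=1$ if $\{i,j\}\in E(H)$ and $\phi_H(i,j)=0$ otherwise. The remaining $2^t$ ``type parts'' are indexed by subsets $S\subseteq[t]$: they have $\phi_H(S,S)=0$, $\phi_H(S,i)=1$ if $i\in S$ and $\phi_H(S,i)=0$ if $i\notin S$, and $\phi_H(S,S')=0$ for distinct type parts. Setting $\calF(\eps)=\{\calP_{\phi_H} : H$ an induced $C_4$-free graph on $[t]\}$ gives $|\calF(\eps)|\leq 2^{\binom{t}{2}}=\exp(\exp(\poly(1/\eps)))$ and $k=\exp(\exp(\poly(1/\eps)))$, matching the claimed bounds.

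To verify Definition~\ref{def:cover}, for Condition~\ref{it:G-in-P} take $H$ to be the clique meta-graph of $G'$ (edges marking complete cuts $(Q_i,Q_j)$); this $H$ inherits induced $C_4$-freeness from $G'$, and the partition placing each $Q_i$ into part $i$ and each $y\in I$ into the type part indexed by $\tau(y)=\{i : y$ is adjacent to $Q_i\}$ is a valid $\phi_H$-witness, since $N_{G'}(y)$ being a clique forces $\tau(y)$ to be a clique in $H$. For Condition~\ref{it:Gprime-in-union}, a case analysis on the parts containing the vertices of a hypothetical induced $C_4$ in a $\phi_H$-obeying graph shows that any vertex lying in a type part $S$ forces its neighbors to sit in clique parts indexed inside $S$ (since cross-type cuts are empty), and tracking this through the non-adjacency constraints produces a contradiction with $S$ being a clique in $H$; the only remaining case is that all four vertices sit in four distinct clique parts, which directly exhibits an induced $C_4$ in $H$, contradicting its choice. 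The main obstacle is the first step: extracting the structural approximation from~\cite{GS19} with $t\leq\exp(\poly(1/\eps))$ while preserving both induced $C_4$-freeness and the clique-neighborhood property during cleanup; once this is in place, the cover construction and verification are essentially syntactic.
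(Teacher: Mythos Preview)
Your overall architecture matches the paper's, but the explicit partition function you write down has two genuine flaws that make both cover conditions fail.

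\textbf{Condition~\ref{it:G-in-P} fails.} You set $\phi_H(S,i)=1$ for $i\in S$, i.e., you require the cut between type part $S$ and clique part $i$ to be \emph{complete}. But the structural lemma only guarantees that $N_{G'}(y)$ is a clique; it does \emph{not} guarantee that $y$'s adjacency to each $Q_i$ is all-or-nothing. Concretely, if $Q_1=\{a,b,c\}$ and $N_{G'}(y)=\{a\}$, then $\tau(y)=\{1\}$, yet $(y,b)\notin E'$, so your proposed witness partition violates $\phi_H(\{1\},1)=1$. The paper handles this by setting $\phi_H(u_C,v)=\bot$ (not $1$) for $v\in C$; this is exactly why the cover must be \emph{semi}-homogeneous.

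\textbf{Condition~\ref{it:Gprime-in-union} fails.} You create type parts for \emph{all} $2^t$ subsets $S\subseteq[t]$, but your case analysis concludes with ``a contradiction with $S$ being a clique in $H$'' --- a premise you never imposed. Take $H$ to be two isolated vertices $\{1,2\}$ (trivially induced $C_4$-free), and consider a graph $G'\in\calP_{\phi_H}$ whose witness partition puts $n/4$ vertices in clique part $1$, $n/4$ in clique part $2$, and $n/2$ in type part $S=\{1,2\}$. Then for any $y,y'$ in part $S$, $a$ in part $1$, $b$ in part $2$, the quadruple $(y,a,y',b)$ is an induced $C_4$: all four cycle edges are forced by $\phi_H(S,1)=\phi_H(S,2)=1$, while $\phi_H(S,S)=\phi_H(1,2)=0$ forbid the diagonals. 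This $G'$ is $\Omega(1)$-far from induced $C_4$-free. The paper avoids this by indexing the ``type'' parts only by \emph{cliques} $C$ of $H$, not arbitrary subsets; then in the case analysis, two neighbors of a type-part vertex must lie in parts indexed by elements of $C$, hence are adjacent.

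Both fixes --- using $\bot$ in place of $1$, and restricting type parts to cliques of $H$ --- are needed, and together they give exactly the paper's construction. (Your extra cleanup step to bound $t$ is also unnecessary: the structural lemma from~\cite{GS19}, stated in the paper as Lemma~\ref{lem:everything_for_assaf_and_lior}, already delivers $t\leq\exp(\poly(1/\eps))$.)
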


Here too, for the sake of succinctness,  we refer to induced $C_4$-freeness
as $C_4$-freeness.
In order to prove Lemma~\ref{lem:c4-cover},
 we make use of the next lemma, which is
essentially implicit in~\cite{GS19} and whose proof can be found in Appendix~\ref{app:c4}.


\begin{lemma}\label{lem:everything_for_assaf_and_lior}
	Let $G = (V,E)$ be a $C_4$-free graph. Then for every $\eps>0$
there exists a graph $G' = (V,E') $ such that the following holds.
	\begin{enumerate}
		\item\label{prop:Delta-G-G'} $\Delta(G,G') \leq \eps/2$.
		\item\label{prop:G'-C4-free} $G'$ is $C_4$-free.
		\item\label{prop:G'-Q-I}  There exists a partition of $V$ into two subsets, $Q$ and $I$ such that
$I$ is an independent set in $G'$
and $Q$ can be further partitioned into subsets $Q_1, \dots, Q_{t}$ for
$t \leq \bar{t} = \exp(\poly(1/\eps))$  
such that the following holds: 
        \begin{enumerate}
            \item\label{prop:Qi-clique} For each $i \in [t]$, $G'[Q_i]$ is a clique.
            \item\label{prop:Qi-Qj-homogeneous} For each $i,j \in [t]$, $i\neq j$,  $(Q_i, Q_j)$ is homogeneous in $G'$.
		    \item\label{prop:Ny-clique} For vertex $y \in I$, $G'[N_{G'}(I)]$ is a clique.
        \end{enumerate}
	\end{enumerate}
\end{lemma}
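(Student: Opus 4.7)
The plan is to leverage the key structural observation: in any induced $C_4$-free graph, if $u,v$ are non-adjacent, then the common neighborhood $N(u) \cap N(v)$ is a clique (otherwise two non-adjacent common neighbors $x,y$ would close an induced $C_4$ on $\{u,x,v,y\}$). This observation will drive every part of the construction.

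First I would identify the cliques $Q_1, \dots, Q_t$ by iteratively peeling off large maximal cliques. Set a threshold $\alpha$ polynomially related to $\eps$; while some remaining vertex lies in a clique of size $\geq \alpha n$, choose a maximum such clique, call it $Q_i$, remove its vertices, and iterate. The remaining vertices form $I$. Because $G$ is $C_4$-free, neighborhoods are structured enough (e.g.\ via Ramsey-type bounds combined with the common-neighbor fact) that the iteration terminates after $t \leq \bar{t}=\exp(\poly(1/\eps))$ steps, as established in~\cite{GS19}. Vertices in $I$ then belong to no large clique, so the induced subgraph $G[I]$ has small edge density.

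Second I would enforce homogeneity of each cut $(Q_i, Q_j)$ by replacing it with whichever of the empty or complete cut is closer to the current bipartite pattern. The critical point is that in a $C_4$-free graph each such bipartite pattern is \emph{already} close to homogeneous: a careful case analysis using the common-neighbor fact across a pair of cliques shows that the number of edges one must add or remove to homogenize $(Q_i, Q_j)$ is only $o(|Q_i|\cdot|Q_j|)$ on average, and summing over the $\binom{t}{2}$ pairs costs at most $\eps n^2/8$. In parallel, I would delete all edges internal to $I$ (cheap, by the density bound above).

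Third I would complete each neighborhood $N_{G'}(y)$ for $y\in I$ into a clique by adding missing edges. Here the common-neighbor fact is used again: for any $y\in I$ and any non-neighbor $u$ of $y$, the set $N(y)\cap N(u)$ is a clique, which forces $N(y)$ itself to be a near-clique (at most $\alpha n$ missing edges); summing over $y \in I$ gives total cost at most $\eps n^2/8$, yielding Property~\ref{prop:Delta-G-G'}. The main obstacle will be verifying Property~\ref{prop:G'-C4-free}, that no induced $C_4$ is \emph{created} by these modifications. I would handle this by showing directly that any graph obeying the three structural constraints of Item~\ref{prop:G'-Q-I} is automatically induced $C_4$-free: a putative induced $C_4$ on vertices distributed among $\{Q_1,\dots,Q_t\}$ and $I$ must, via case analysis on how its four vertices fall into the parts, either violate homogeneity of some $(Q_i,Q_j)$, contradict the clique structure of some $Q_i$, contradict $I$ being independent, or contradict some $N_{G'}(y)$ being a clique. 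Proving this rigidity cleanly is the most delicate part of the argument.
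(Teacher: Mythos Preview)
Your proposal has two genuine gaps, and both are at the heart of why the paper leans on heavier machinery from~\cite{GS19} rather than a direct greedy construction.

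First, the homogenization step does not go through. Between two cliques $Q_i,Q_j$ in a $C_4$-free graph the bipartite pattern is $M_2$-free, so neighborhoods are nested; but nested does \emph{not} mean close to homogeneous. A ``staircase'' pattern (vertex $k$ of $Q_i$ adjacent to vertices $1,\dots,k$ of $Q_j$) is $M_2$-free yet has roughly $|Q_i||Q_j|/2$ edges and $|Q_i||Q_j|/2$ non-edges, so homogenizing it costs $\Theta(|Q_i||Q_j|)$. Your ``$o(|Q_i||Q_j|)$ on average'' claim therefore fails. The paper avoids this by invoking the conditional regularity lemma of~\cite{GS19} (Lemma~\ref{lem:cond_regularity}), which produces a refinement $(Q_1,\dots,Q_t)$ together with witness sets $W_i\subseteq Q_i$ such that the sum of $|Q_i||Q_j|$ over \emph{non-homogeneous} pairs is already at most $\alpha n^2$; only then is the rounding cheap.

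Second, and more seriously, your plan to establish Property~\ref{prop:G'-C4-free} is incorrect: the structural constraints in Item~\ref{prop:G'-Q-I} do \emph{not} force $C_4$-freeness. Take $I=\emptyset$ and four singleton cliques $Q_1,\dots,Q_4$ with the cuts $(Q_1,Q_2),(Q_2,Q_3),(Q_3,Q_4),(Q_4,Q_1)$ complete and $(Q_1,Q_3),(Q_2,Q_4)$ empty; every cut is homogeneous yet the graph is an induced $C_4$. So your case analysis cannot succeed. The paper handles $C_4$-freeness in two stages: for $G'[Q]$ it uses a blow-up argument---an induced $C_4$ among the $Q_i$'s would, via the large witness sets $W_i$, yield $\Omega(\gamma n^4)$ induced $C_4$'s in a graph that is $\gamma n^2$-close to $G$, contradicting that $G$ is $C_4$-free. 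For the cut $(Q,I)$ it does \emph{not} add edges to complete neighborhoods (which could create $C_4$'s) but instead \emph{removes} edges via a maximal anti-matching argument (Lemma~\ref{lem:cut_is_close_to_IS_vertices_cliques}), so that each $N_{G'}(y)$ becomes a clique while no new edges appear.
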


\begin{proof}[Proof of Lemma~\ref{lem:c4-cover}]
We first define the family $\calF(\eps)$, and then show that it satisfies the requirements stated in the lemma.

\paragraph{Defining {$\calF(\eps)$}.}
In order to define the family of semi-homogeneous partition properties $\calF(\eps)$, we consider the family of graphs $\calH(\eps)$, which consists of all $C_4$-free graphs that contain at most $k_1 = \bar{t} = \exp(\poly(1/\eps))$
nodes.
For each $H= (V(H),E(H))$  in $\calH(\eps)$ we define a semi-homogeneous partition function $\phi_H$. Rather than defining $\phi_H$ over $[k']\times [k']$ for some $k' \leq k$, it will be convenient to define it over $K(H)\times K(H)$, for a set $K(H)$ that satisfies $|K(H)|= k'$. The set $K(H)$ is a union of two (disjoint) subsets: $K_1(H)$ and $K_0(H)$ where
$K_1(H) = V(H)$ and for each subset $C$ in $V(H)$ such that $H[C]$ is a clique, we have an element (part) $u_C \in K_0(H)$.
The function $\phi_H$ is defined as follows.

\begin{enumerate}
\item For every $v\in K_1(H)$, we set $\phi_H(v,v) = 1$.
\item For every $v,v' \in K_1(H)$ such that $v \neq v'$ we set $\phi_H(v,v')=1$ if $(v,v') \in E(H)$, and otherwise
$\phi_H(v,v')=0$.
\item For every two (not necessarily distinct) $u_C,u_{C'} \in K_0(H)$ we set $\phi_H(u_C,u_{C'}) = 0$.
\item For every $u_C \in K_0(H)$, and for every $v \in C$, we set $\phi_H(u_C,v) = \bot$, and for every $v\in K_1(H)\setminus C$ we set $\phi_H(u_C,v) = 0$.
\end{enumerate}
We now let
\[\calF(\eps) = \{\calP_{\phi_H}: \; H\in \calH(\eps)\}\;.\]

\paragraph{Establishing Condition~\ref{it:G-in-P} in Definition~\ref{def:cover}.}
Let $G$ be a $C_4$-free graph. We apply Lemma~\ref{lem:everything_for_assaf_and_lior} to obtain a graph $G'$ with the properties stated in the lemma (so that in particular, by, Property~\ref{prop:Delta-G-G'} in the lemma, $\Delta(G,G') \leq \eps/2$). We next show that there exists a graph $H \in \calH(\eps)$ such that $G'$ obeys $\phi_{H}$. For each $Q_i$ in the partition of $Q$ (ensured by Property~\ref{prop:G'-Q-I}), we have a node $v_i$ in $V(H)$. There is an edge between $v_i$ and $v_j$ in $V(H)$ if and only if the cut $(Q_i,Q_j)$ is complete in $G'$ (recall that $(Q_i,Q_j)$ is homogeneous by Property~\ref{prop:Qi-Qj-homogeneous}). We need to show that $H \in \calH(\eps)$ and that $G'$ obeys $\phi_{H}$.

In order to show that $H \in \calH(\eps)$, we first note that since there is a vertex in $V(H)$ for each subset $Q_i$, we have that $|V(H)| = t \leq \bar{t}$ 
as required. We next verify that $H$ is $C_4$-free. Assume, contrary to the claim, that $H$ contains four nodes, $v_{i_1},v_{i_2},v_{i_3},v_{i_4}$ such that the subgraph of $H$ induced by these nodes is a $C_4$.
But then this implies that there exists an induced $C_4$ in $G'$ (take $q_j \in Q_{i_j}$ for $j\in [4]$), contradicting the fact that $G'$ is $C_4$-free.

It remains to show that  $G'$ obeys $\phi_{H}$. To this end we  assign the vertices in $V(H)$ to parts of $\phi_H$.
We start by assigning each $q\in Q_i\subseteq Q$ for $i \in [t]$ to $v_i$.
By the properties of the partition $(Q_1,\dots,Q_t)$ of $Q$ (Properties~\ref{prop:Qi-clique} and~\ref{prop:Qi-Qj-homogeneous} in
Lemma~\ref{lem:everything_for_assaf_and_lior}) and the definition of $\phi_H$,
\begin{equation}\label{eq:q-q'}
\forall q,q'\in Q,\;q\neq q',\; q\in Q_i,q'\in Q_j:\; (q,q')\in E(G')\; \mbox{ i.f.f. } \; \phi_H(v_i,v_j) = 1\;.
\end{equation}
Next,
for each $y \in I$ (where by Property~\ref{prop:G'-Q-I} in Lemma~\ref{lem:everything_for_assaf_and_lior}, $G'[I]$ is an 
empty graph),
let $J(y) = \{v_i : \; N_{G'}(y) \cap Q_i \neq \emptyset\}$.
Recall that by Property~\ref{prop:Ny-clique}, the subgraph of $G'$ induced by $N_{G'}(y)$ is a clique for every $y \in I$.
Therefore, for every $y \in I$, the subgraph of $H$ induced by $J(y)$ is a clique as well.
Also recall that by the definition of $\phi_H$, each part $u_C \in K_0(H)$ is indexed by a clique $C$ in $H$.
Hence, we can assign each $y \in I$ to $u_{J(y)}$.
By the definition of $\phi_H$,
\begin{equation}\label{eq:y-y'}
\forall y,y' \in I,\; \phi_H(u_{J(y)},u_{J(y')})=0\;,
\end{equation}
which is consistent with the fact that $G'[I]$ is an 
empty graph.
We also have that
\begin{equation}\label{eq:y-q}
\forall y\in I,\;  \phi_H(u_{J(y)},v_i) = \bot \; \mbox{ if }\; i \in J(y), \;\mbox{ and }
\phi_H(u_{J(y)},v_i) = 0\; \mbox{ if }\;  i \notin J(y)\;.
\end{equation}
By combining Equations~(\ref{eq:q-q'})--(\ref{eq:y-q}), we get  that $G'$ obeys $\phi_H$.

\paragraph{Establishing Condition~\ref{it:Gprime-in-union} in Definition~\ref{def:cover}.}
Consider a graph $G' \in \calP$ for some $\calP \in \calF(\eps)$. We next show that it is $C_4$-free. By the definition of $\calF(\eps)$, the property $\calP$ is defined by some semi-homogeneous partition function $\phi_H$ for
$H \in \calH(\eps)$. Namely, there exists a mapping $\nu: V(G') \to K(H) = K_1(H) \cup K_0(H)$ such that for every
$z,z' \in V(G')$, if $\phi_H(\nu(z),\nu(z') = 1$ then $(z,z')\in E(G')$ and if $\phi_H(\nu(z),\nu(z') = 0$ then $(z,z')\notin E(G')$.

Assume, contrary to the claim, that $G'$ contains an induced $C_4$ over the (distinct) vertices
$z_1,z_2,z_3,z_4 \in V(G')$ (with the edges $(z_1,z_2)$, $(z_2,z_3)$, $(z_3,z_4)$ and $(z_4,z_1)$),
and let  $Z=\{z_j\}_{j=1}^4$.
%
Consider the parts $\nu(z_1),\nu(z_2),\nu(z_3),\nu(z_4)$ (which are not necessarily distinct).
\begin{enumerate}
\item Suppose that at least three among these parts belong to $K_0(H)$.
But $\phi_H(u,u')=0$ for every pair $u,u' \in K_0(H)$, so this case is not possible (under the counter assumption that $G'[Z]$ is a $C_4$).
\item Suppose that all four of these parts belong to $K_1(H)=V(H)$. These parts cannot be distinct, as $H$ is $C_4$-free, and no two can be identical since then the number of edges in $G'[Z]$ is at least 5.
\item If at least one part belongs to $K_0(H)$ and at least two parts belong to $K_1(H)$, then without loss of generality,
$\nu(z_1) \in K_0(H)$ and $\nu(z_2),\nu(z_4) \in K_1(H)$.
By the definition of $\phi_H$, this means that $\phi_H(\nu(z_1),\nu(z_2))= \bot$ and $\phi_H(\nu(z_1),\nu(z_4))=\bot$. But then $(\nu(z_2),\nu(z_4))\in E(H)$, so that $\phi_H(\nu(z_2),\nu(z_4)) = 1$, implying that $(z_2,z_4) \in E(G')$, so that
$G'[Z]$ is not a $C_4$.
\end{enumerate}

\paragraph{Bounding $k$ and $|\calF(\eps)|$.}
Finally, by the definition of $\calF(\eps)$ we have that for every $H$,
$|K(H)| = |K_1(H)| + |K_0(H)| \leq k_1 + 2^{k_1}$ which is upper bounded by $\exp(\exp(\poly(1/\eps)))$.
As for the upper bound on $|\calF(\eps)|$, we have that
$|\calF(\eps)| = |\calH(\eps)| =  2^{2^{O(k_1^2)}}$ 
which is also upper bounded by $\exp(\exp(\poly(1/\eps)))$,
and the lemma is established.
\end{proof}
Theorem~\ref{thm:c4_is_EE} now directly follows by combining Corollary~\ref{coro:cover-of-shpp} and Lemma~\ref{lem:c4-cover}.


\section{Chordality}\label{sec:chordal}



In this section we prove the next lemma.
	\begin{lemma} \label{lem:chordal_cover}
		For every $\eps>0$, there exists a family of graph properties
		$\calF(\eps) \subset \tcalHPP^k$, for 
$k=\exp(\poly(1/\eps))$
		such that
		$\calF(\eps)$ is an $\eps$-cover for chordality and 
	$|\calF(\eps)|=\exp(\poly(1/\eps))$.
	\end{lemma}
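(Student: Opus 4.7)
The plan is to establish Lemma~\ref{lem:chordal_cover} in two stages, paralleling the structure of the induced $C_4$-freeness argument (Lemma~\ref{lem:c4-cover}).

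First, I would prove a structural lemma in the spirit of Lemma~\ref{lem:everything_for_assaf_and_lior}: every chordal graph $G=(V,E)$ is $(\eps/2)$-close to a chordal graph $G'=(V,E')$ admitting a partition $V = X \cup Y$ such that $G'[X]$ has a clique tree $T$ on at most $m = \poly(1/\eps)$ maximal cliques $C_1,\dots,C_m$, the set $Y$ is independent in $G'$, and for every $y \in Y$ the set $N_{G'}(y)$ is a clique. The strategy is to start from an arbitrary clique tree of $G$ and iteratively perform two types of simplifications: (i) shorten long paths in the tree by removing small separator edges, which disconnects light branches and lets us either fold them into $Y$ or collapse consecutive ``similar'' maximal cliques into one; and (ii) prune light subtrees whose descending cliques together touch only $O(\eps n)$ vertices, again by moving vertices into $Y$. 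Each step costs at most $O(\eps n)$ edges per vertex, and the total modification is bounded by $\eps n^2/2$, while the number of surviving maximal cliques drops to $\poly(1/\eps)$.

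Next, I would define $\calF(\eps)$ by enumerating over all combinatorial skeletons of clique trees of size at most $m$. A skeleton is a labeled tree on at most $m$ nodes together with, for each non-empty $S\subseteq [m]$ that corresponds to a ``path region'' of the tree, a tag indicating that $S$ is a permissible vertex-class. For every such skeleton I define a semi-homogeneous partition function $\phi \in \tPhik$ whose parts are indexed as follows. For every non-empty $S \subseteq [m]$ there is a part $P_S$ (intended for $X$-vertices belonging to exactly the maximal cliques indexed by $S$). For every $S^*\subseteq[m]$ that indexes a permissible ``clique pattern'' there is a part $u_{S^*}$ (intended for $Y$-vertices whose neighborhood in $X$ equals $P_{S^*}$). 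The function $\phi$ is set so that $\phi(P_S,P_{S'})=1$ when $S \cap S' \neq \emptyset$ and $0$ otherwise; $\phi(u_{S^*},u_{S^{**}})=0$ for all pairs; $\phi(u_{S^*},P_S)=1$ when $S=S^*$, and $\phi(u_{S^*},P_S)=\bot$ or $0$ otherwise (the $\bot$-values are restricted to $(u_{S^*},P_S)$ pairs in which a clique is preserved). The homogeneity among the $P_S$-parts is immediate from the clique-tree characterization of chordal graphs: two vertices are adjacent in $G'[X]$ if and only if they share a maximal clique of $T$, which is exactly the condition $S \cap S' \neq \emptyset$.

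Finally, I would verify the two conditions of Definition~\ref{def:cover}. For the first condition, given any chordal $G$, I apply the strengthened structural lemma to obtain $G'$, read off the skeleton of the clique tree $T$ of $G'[X]$, and assign each $X$-vertex to its $P_S$-part and each $Y$-vertex to the $u_{S^*}$-part indexed by its neighborhood pattern; by construction $G'$ obeys $\phi$. For the second condition, given any $G''$ obeying some $\phi\in\calF(\eps)$, I observe that $G''[X]$ obeys the homogeneous sub-partition given by the $P_S$-parts, so it admits exactly the clique tree $T$ and hence is chordal; the subgraph involving $Y$ can be made chordal by filling in the $\bot$-valued cuts in the canonical (clique-preserving) way indicated by the definition of $\phi$, at no cost in distance. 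Thus graphs obeying $\phi$ are in fact chordal, so the second condition of Definition~\ref{def:cover} holds with slack.

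The main obstacle will be controlling the number of $Y$-parts: naively, a clique pattern could be any family of subsets of $[m]$ sharing a common index, yielding doubly exponentially many $u$-parts. The plan to circumvent this is to insist in the structural lemma that the neighborhood of every $y \in Y$ in $X$ coincides exactly with a single part $P_{S^*}$, by further paying at most an additional $\eps/4$ fraction to round $N_{G'}(y)\cap X$ to the part of some $P_S$ whose vertices are completely contained in or completely disjoint from $N_{G'}(y)$. With this strengthening, the number of $Y$-part indices is at most $2^m$, so the total number of parts per skeleton is $2^{m+1} = \exp(\poly(1/\eps))$, and the number of skeletons (labeled trees on at most $m$ nodes together with the subset tagging) is at most $m^{m-2}\cdot 2^{2^m} = \exp(\poly(1/\eps))$, as required. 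Lemma~\ref{lem:chordal_cover} then combines with Corollary~\ref{coro:cover-of-shpp} to yield Theorem~\ref{thm:chordaily_is_EE}.
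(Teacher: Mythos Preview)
Your overall strategy parallels the paper's, but there are two genuine gaps.

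\textbf{Family size.} You assert $m^{m-2}\cdot 2^{2^m}=\exp(\poly(1/\eps))$ with $m=\poly(1/\eps)$; this is false, since $2^{2^m}$ is doubly exponential in $1/\eps$. The culprit is the ``subset tagging'' in your skeleton, which you introduced to handle the $Y$-parts. The paper avoids this entirely: each $Y$-part is indexed not by a subset $S^*\subseteq[m]$ but by a single \emph{node} $x$ of $T$, with $\tilde\phi_T(\pi(x),K)=\bot$ whenever the subtree $K$ contains $x$ and $0$ otherwise. Since $N_{G'}(y)$ is already a clique in $G'[X]$, it sits inside some maximal clique $C_x$, and one simply assigns $y$ to $\pi(x)$; the $\bot$ constraints then permit $N_{G'}(y)$ to be \emph{any} subset of $C_x$, so your rounding step (whose $\eps/4$ cost you do not justify) is unnecessary. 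This yields only $m$ extra parts and exactly one partition function per tree $T$, hence $|\calF(\eps)|\le 2^{O(m)}=\exp(\poly(1/\eps))$.

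\textbf{Chordality of graphs in the cover.} For Condition~\ref{it:Gprime-in-union} you need every graph obeying $\phi$ to be chordal. If the $X$-parts $P_S$ range over \emph{all} subsets $S\subseteq[m]$ with $\phi(P_S,P_{S'})=1\Leftrightarrow S\cap S'\neq\emptyset$, this fails: take any tree on $[4]$ in which $\{4,1\}$ is not a subtree; then vertices placed in $P_{\{1,2\}},P_{\{2,3\}},P_{\{3,4\}},P_{\{4,1\}}$ induce a $C_4$. The paper restricts the $X$-parts to \emph{subtrees} of $T$; then $G''[X]$ is a subtree-intersection graph and hence chordal by Gavril's theorem (Theorem~\ref{thm:Gavril}). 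Your phrase ``path region'' hints at this, but the definition you actually wrote (``for every non-empty $S\subseteq[m]$ there is a part $P_S$'') does not impose it, and your chordality verification just asserts ``it admits exactly the clique tree $T$'' without an argument.

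A smaller point: your structural lemma is plausible in spirit, but the paper does not attack the clique tree directly. It first invokes a regularity-based lemma (Lemma~\ref{lem:chordal_everything_for_assaf_and_lior}) to obtain a $\poly(1/\eps)$ clique \emph{cover} of $G'[Q]$; this bounds the number of leaves and hence branching nodes of the clique tree (Claim~\ref{claim:leaves_are_X}), after which only $\poly(1/\eps)$ non-branching paths remain to be shortened (Lemma~\ref{lem:chordal_x_part}). Your sketch does not explain how to control the number of branch points without that first step.
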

Theorem~\ref{thm:chordaily_is_EE} directly follows by combining Lemma~\ref{lem:chordal_cover} with
Corollary~\ref{coro:cover-of-shpp}.


	In Section~\ref{subsec:chordal-preliminaries} we present some basic definitions and a claim regarding chordal graphs.
In Section~\ref{subsec:chordal-main-lemmas} we state the main lemmas that the proof of Lemma~\ref{lem:chordal_cover} is based on, and show how Lemma~\ref{lem:chordal_cover} is derived from them. 
We prove these lemmas in Section~\ref{subsec:chordal_x_part}, Section~\ref{subsec:P-T-lemmas} and the appendix.

	
	\subsection{Chordal Preliminaries} \label{subsec:chordal-preliminaries}

The notion of \emph{clique trees} was introduced independently by Buneman~\cite{Buneman}, Gavril~\cite{Gavril} and Walter~\cite{Walter} to characterize chordal graphs. Here we use a slight variant, which appears in particular in~\cite[Sec. 3]{Chordal-Intro}. 
\begin{definition} 
\label{def:clique-tree}
		A {\sf clique tree} of a 
 graph $G$ is a tree $T_G$
 such that there is a one-to-one correspondence between the nodes of $T_G$ and the maximal cliques of $G$, and
 where for every two maximal cliques $C,C'$ in $G$, each clique on the path from $C$ to $C'$ in $T_G$ contains $C\cap C'$.
	\end{definition}	
For simplicity of the presentation, when referring to a node in $T_G$ that corresponds to a maximal clique $C$ in $G$, we sometimes simply refer to the node as $C$.

The proof of the following theorem can be found in~\cite[Thm. 3.2]{Chordal-Intro}.
To be precise, the proof is given for connected graphs, but it is not hard to verify that it also holds for general graphs.
For details, see Appendix~\ref{app:subsec:proof_thm+chordal_iff_clque_tree}.
	\begin{theorem} 
\label{thm:chordal-iff-clique-tree}
		A graph $G$ is chordal if and only if it has a clique tree.
	\end{theorem}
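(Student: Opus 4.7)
The plan is to bootstrap the connected-case result (Theorem 3.2 of~\cite{Chordal-Intro}) to arbitrary graphs using two elementary structural observations. Let $G_1,\dots,G_m$ denote the connected components of $G$. Then (i) $G$ is chordal if and only if every $G_i$ is chordal, since any induced cycle of length at least $4$ is contained in a single component; and (ii) the maximal cliques of $G$ are exactly the disjoint union of the maximal cliques of the $G_i$, since no vertex of one component can be added to a clique of another.

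For the direction ``chordal $\Rightarrow$ clique tree,'' suppose $G$ is chordal. By (i) each $G_i$ is chordal, so by the connected case each $G_i$ admits a clique tree $T_i$. I would build $T_G$ by taking the disjoint union of $T_1,\dots,T_m$ and adding arbitrary ``bridge'' edges $(r_1,r_2),(r_2,r_3),\dots,(r_{m-1},r_m)$, where $r_i$ is any chosen node of $T_i$. By (ii) the resulting node set is in bijection with the maximal cliques of $G$. To check Definition~\ref{def:clique-tree}: if two maximal cliques $C,C'$ lie in the same $T_i$, the unique $T_G$-path from $C$ to $C'$ coincides with the $T_i$-path, so the containment $C\cap C'\subseteq C''$ for every $C''$ on the path is inherited from $T_i$; if $C$ and $C'$ lie in distinct components, $C\cap C'=\emptyset$ and the containment holds vacuously.

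For the converse, suppose $T_G$ is a clique tree for $G$. By (i) it suffices to show each $G_i$ is chordal, for which it is enough to exhibit a clique tree $T_i$ for $G_i$ and invoke the connected case. Let $S_i\subseteq V(T_G)$ be the set of nodes corresponding to maximal cliques of $G_i$. I would show that $S_i$ induces a subtree of $T_G$, and then take $T_i$ to be that induced subtree. Connectedness of the induced subgraph on $S_i$ follows by using the equivalent characterization of clique trees: for every vertex $v$ of $G$, the set of maximal cliques containing $v$ induces a subtree of $T_G$. Given $C,C'\in S_i$, pick a path $v_0,v_1,\dots,v_k$ in $G_i$ from some $v_0\in C$ to some $v_k\in C'$; for each $j$ the set of maximal cliques containing $v_j$ is a subtree lying entirely in $S_i$, consecutive subtrees share a clique (any maximal clique containing both $v_{j-1}$ and $v_j$), and the union is therefore connected in $T_G$ and contained in $S_i$, linking $C$ to $C'$. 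Once $S_i$ induces a subtree, the clique-tree property for $G_i$ is immediate because $T_G$-paths between nodes of $S_i$ stay inside $S_i$ and intersections of cliques of $G_i$ with cliques outside $G_i$ are empty.

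The work is essentially bookkeeping; the one substantive point is establishing that $S_i$ induces a connected subgraph of $T_G$, which I expect to be the main (mild) obstacle. Rather than re-deriving this, I would cite the standard equivalent formulation of clique trees via ``running intersection,'' which makes the subtree-closure argument immediate and lets the whole extension to disconnected graphs go through cleanly.
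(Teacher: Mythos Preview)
Your proposal is correct and follows essentially the same route as the paper: both directions bootstrap the connected case by (a) gluing component clique trees with arbitrary bridge edges and checking the two cases (same/different component), and (b) showing that the maximal cliques of each component induce a connected subgraph of $T_G$ via a vertex-path argument through shared cliques. The only cosmetic difference is that you invoke the ``each vertex's cliques form a subtree'' characterization as a citation, whereas the paper derives the needed containment directly from Definition~\ref{def:clique-tree} (and proves that characterization separately as Claim~\ref{clm:cliques_with_cut_are_CC}).
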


The next three definitions will be helpful in analyzing the structure of chordal graphs.
	
	\begin{definition}\label{def:plain}
		A \emph{\sf non-branching}
       path on a tree is a path whose internal nodes are all of degree two.
   \end{definition}

   \begin{definition}\label{def:side}
       Let $T$ be a tree and let $x$ and $y$ be two nodes in $T$ such that the path between them, denoted $P$, is non-branching. We shall say that a node $z$ in $T$ that does not belong to $P$ is {\sf on the same side of $T$} as $x$, if the shortest path on $T$ from $z$ to $y$ passes through $x$. Otherwise it is on the same side of $T$ as $y$.
      	\end{definition}

	\begin{definition}\label{def:clique-cover}
A {\sf clique cover} of a graph $G = (V,E)$ is a set of 
maximal cliques in $G$ whose union is $V$. 
We say that it is an {\sf $r$-clique cover} if it contains $r$ cliques.
A graph that has an $r$-clique cover is said to be \emph{$r$-clique-coverable}.
	\end{definition}

The following lemma is similar to Lemma~\ref{lem:everything_for_assaf_and_lior}, which was used in the analysis of
distance approximation for induced $C_4$-freeness (recall that every chordal graph is in particular induced $C_4$-free).

	\begin{lemma} \label{lem:chordal_everything_for_assaf_and_lior}
		Let $G= (V,E)$ be a chordal graph, and let $\eps' \in (0,1)$. Then there exists a chordal
graph  $G'=(V,E')$ such that: 
		\begin{enumerate}
			\item \label{it:chordal-G-GP-epsP} $\Delta(G,G') \leq \eps'$.

			\item \label{it:chordal-Q-I} $V$ can be partitioned into two subsets $Q$ and $I$ for which the following holds.
          \begin{enumerate}
          \item \label{it:chordal-Q} $G'[Q]$ is chordal and has an $r$-clique cover  for $r \leq 2^{20}/(\eps')^{12}$. 
          \item \label{it:chordal-I}  $G'[I]$ is empty
          and for every vertex $y\in I$, $N_{G'}(y)$ is a clique in $G'$.
        \end{enumerate}
		\end{enumerate}
	\end{lemma}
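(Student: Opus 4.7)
The plan is to exploit the clique tree $T_G$ guaranteed by Theorem~\ref{thm:chordal-iff-clique-tree} and ``shrink'' it into a small clique tree for $G'[Q]$ whose $O(\poly(1/\eps'))$ nodes will furnish the desired clique cover, while absorbing the remaining vertices of $G$ into $I$ after removing edges that would otherwise break the simpliciality of each $y \in I$.

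First I would classify maximal cliques of $G$ by their \emph{private} vertices, where $v$ is private to $C$ if $C$ is the unique maximal clique of $G$ containing $v$. Every private vertex is simplicial, so $N_G(v) = C\setminus\{v\}$ is already a clique—precisely the condition demanded of members of $I$. Setting a threshold $\alpha = \Theta((\eps')^c)$ for a constant $c$ to be tuned, call $C$ \emph{heavy} if either $|C| \geq \alpha n$ or $C$ has at least $\alpha n$ private vertices; by averaging there are $O(1/\alpha)$ such cliques, and these serve as the ``anchors'' of the reduced clique tree.

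Second, following the template of Lemma~\ref{lem:close-tree}, I would reshape $T_G$ by taking the minimal subtree $S$ spanning all heavy cliques and, along each non-branching path of $S$ between anchors (or between an anchor and a branch point), performing a contraction that collapses the path into a skeleton of bounded length. The contraction proceeds by deleting the edges between vertices that live exclusively in cliques strictly interior to the path; the deleted-edge budget is at most the squared private-mass along that path. Light subtrees hanging off $S$, of small total private mass, are pruned wholesale: their exclusive vertices are placed into $I$ and every edge from such a vertex to a vertex outside the one maximal clique it ultimately belongs to is deleted, so that each $y\in I$ is simplicial in $G'$ and $I$ is independent. This gives $Q$ equal to the union of the maximal cliques corresponding to the skeletal nodes, and hence an $r$-clique cover with $r = O(1/\alpha)$ nodes.

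The cost accounting is routine: the total mass of contracted or pruned material is at most $\eps'/2$ by choice of $\alpha$, and choosing $c$ so that the squared mass bounds compose correctly produces the stated $r \leq 2^{20}/(\eps')^{12}$. The hard part—and the crux of the argument—will be verifying that after the contractions and edge removals $G'[Q]$ is still chordal, i.e.\ that the shrunken skeleton is a genuine clique tree satisfying the running-intersection property of Definition~\ref{def:clique-tree}. Chordality is considerably more restrictive than induced $P_4$-freeness, so one must argue, case-by-case along each contracted non-branching path, that no new induced cycle of length $\geq 4$ is created; this requires the contraction to be defined so as to respect the ordering of separators along the original path, a delicate bookkeeping step that subsumes the technical content of the lemma.
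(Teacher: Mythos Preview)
Your approach diverges fundamentally from the paper's and contains a concrete gap. The paper does not touch the clique tree at all for this lemma; instead it invokes the conditional regularity lemma of Gishboliner--Shapira (Lemma~\ref{lem:cond_regularity}), exploiting only that $G$ is $C_4$-free, to obtain a partition $(X_1,\dots,X_k,Y,Z)$ with $k\le 10\alpha^{-3}$ where each $X_i$ is close to a clique, $Y$ is close to independent, and $Z$ is tiny. After homogenising between the refined parts, the $k$ sets $X_i\setminus Z$ are cliques and furnish the clique cover of $G'[Q]$; chordality of $G'[Q]$ is then certified by a counting argument (an induced cycle of length $\ell\le 10\alpha^{-3}$ in $G^{(2)}[X\setminus Z]$ would yield $\Omega(\gamma n^\ell)$ copies in $G$), and Lemma~\ref{lem:cut_is_close_to_IS_vertices_cliques} repairs the $I$-side. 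The clique-tree surgery you sketch is essentially what the paper does \emph{later}, in Lemma~\ref{lem:chordal_x_part}, once a bounded clique cover is already in hand.

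The gap in your argument is the sentence ``by averaging there are $O(1/\alpha)$ such cliques.'' This is correct for the private-vertex criterion but false for the size criterion $|C|\ge\alpha n$, because maximal cliques can overlap heavily. Take a clique $B$ on $\alpha n$ vertices and attach $(1-\alpha)n$ further vertices, each adjacent to all of $B$ and nothing else. This split graph is chordal; its maximal cliques are $B\cup\{v_i\}$, all of size $\alpha n+1$ yet each with a single private vertex. Thus all $(1-\alpha)n$ cliques are heavy by your rule, the spanning subtree $S$ is the entire clique tree, and your anchor set has $\Theta(n)$ nodes. The instance is in fact trivial---put every $v_i$ in $I$ and set $Q=B$---but your selection mechanism does not discover this. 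Without a sound bound on the number of anchors the rest of the plan (contraction, cost accounting, the claimed $r\le 2^{20}/(\eps')^{12}$) cannot start; this is exactly why the paper falls back on the regularity-based partition of~\cite{GS19} for this step and defers all clique-tree manipulation to the next lemma.
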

The proof of Lemma~\ref{lem:chordal_everything_for_assaf_and_lior} is given in Appendix~\ref{app:chordal}.

\subsection{Main supporting lemmas and the proof of Lemma~\ref{lem:chordal_cover}} \label{subsec:chordal-main-lemmas}

The next lemma is a central component in the proof of Lemma~\ref{lem:chordal_cover}.
\begin{lemma} \label{lem:chordal_x_part}
	Let $G$ be an $r$-clique-coverable chordal graph and let $\eps' \in (0,1)$. Then there exists a graph $G'$ such that:
	\begin{enumerate}
		\item $\Delta(G,G') \leq 2r \cdot \eps'$. \label{item:chordal_x_part:dist}
		\item $G'$ is chordal, and  has a clique tree of size  at most $4r/\eps'$.
\label{item:T-G-prime is small}
		\item For each clique $C$ in $G$, there exists a maximal clique $C'$ in $G'$ s.t. $|C\setminus C'| \leq \eps' n$. \label{item:x_part:close_cliques}
	\end{enumerate}
\end{lemma}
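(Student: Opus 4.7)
I would apply Theorem~\ref{thm:chordal-iff-clique-tree} to obtain a clique tree $T_G$ of $G$. Using the $r$-clique cover $\{C_1,\dots,C_r\}$, I first observe that every maximal clique $D$ of $G$ must lie on the Steiner subtree $T^*$ of $T_G$ spanning $\{C_1,\dots,C_r\}$: if $D$ hung off $T^*$ at some node $D^*\in T^*$, then by the running intersection property applied to paths from $D$ to each $C_{i(v)}$ that contains a given $v\in D$, every vertex of $D$ would also belong to $D^*$, giving $D\subseteq D^*$ and contradicting the maximality of $D$. Hence $T_G=T^*$, so $T_G$ has at most $r$ leaves, at most $r-2$ branch nodes, and decomposes into at most $2r-3$ edge-disjoint non-branching paths (in the sense of Definition~\ref{def:plain}) between leaves and branch nodes.

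The plan is then to shorten each non-branching path $P=(D_0,D_1,\dots,D_\ell)$ of length $\ell>2/\eps'$ to length at most $2/\eps'$ by modifying the graph. The local operation I would use is a \emph{clique-merge}: to contract a sub-path of consecutive cliques, replace it by a single clique equal to their union, adding all missing edges within that union. Chordality is preserved because the candidate clique tree obtained by replacing the merged segment with a single node continues to satisfy the running intersection property, and the new merged clique is maximal in the modified graph: any vertex extending it would have to lie, by maximality of the original cliques, in each of them and hence in the intersection, which is contained in the union itself. Crucially, because the operation only adds edges, every original maximal clique $C=D_i$ is contained in the merged clique $C'$ of $G'$ that it collapses into, so $|C\setminus C'|=0\le\eps' n$, which gives Item~\ref{item:x_part:close_cliques}.

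The main obstacle---and the technical heart of the proof---is the amortized accounting showing that the total number of edges added per non-branching path is at most $2\eps' n^2$; a naive per-merge bound of $n^2$ is far too loose. The intended argument exploits the fact that, by the running intersection property restricted to $P$, each vertex $v$ has a contiguous \emph{range} $[a_v,b_v]$ of cliques on $P$ that contain it, and two vertices of $\bigcup_i D_i$ are adjacent in $G$ if and only if their ranges overlap. Partitioning $P$ into $2/\eps'$ equal segments of length $\ell\eps'/2$ and merging the cliques within each segment, each newly added edge corresponds to a pair $(u,v)$ whose ranges both lie within a single segment but are disjoint, and is therefore charged to only one segment; a careful accounting over vertex ranges yields the per-path bound of $2\eps' n^2$. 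Summing across the $O(r)$ non-branching paths gives $\Delta(G,G')\le 2r\eps'$ (Item~\ref{item:chordal_x_part:dist}), and the resulting tree $T_{G'}$ consists of the special (leaf and branch) nodes of $T^*$ together with at most $2/\eps'$ interior nodes per shortened path, totaling at most $4r/\eps'$ nodes (Item~\ref{item:T-G-prime is small}).
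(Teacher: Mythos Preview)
Your opening paragraph is correct and matches the paper's setup (Claims~\ref{claim:leaves_are_X} and~\ref{clm:clique_in_path_is_in_union}): the clique tree has at most $r$ leaves and $r$ branch nodes, and every internal clique on a non-branching path between two special nodes $C_1,C_2$ is contained in $C_1\cup C_2$. The clique-merge operation also preserves chordality and makes Item~\ref{item:x_part:close_cliques} trivial, as you say.

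The gap is in Item~\ref{item:chordal_x_part:dist}: merging equal-\emph{length} segments does not give the per-path bound $2\eps' n^2$. Along such a path every vertex lies in $C_1$ or $C_2$, so for $u\in A:=C_1\setminus C_2$ the range is $[0,b_u]$ and for $v\in B:=C_2\setminus C_1$ it is $[a_v,\ell]$; in particular no range ``lies within a single segment''. An edge is added for the pair $(u,v)$ precisely when $b_u$ and $a_v$ fall in the same segment. Nothing prevents $\Theta(n)$ of the $b_u$'s from equalling one value and $\Theta(n)$ of the $a_v$'s from equalling the next. Concretely, take $|A|=|B|=n/2$; let one $u$ have $b_u=0$, then $n/4$ vertices with $b_u=1$, then one vertex each with $b_u=2,3,\dots,n/4$; symmetrically let one $v$ have $a_v=1$, then $n/4$ vertices with $a_v=2$, then one each with $a_v=3,\dots,n/4+1$. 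This is a chordal graph with $r=2$ and a clique-tree path of length $\ell\approx n/4$. Your first segment contains positions $1$ and $2$, so merging it adds all $(n/4)^2=n^2/16$ missing edges between the two big blocks, already exceeding the budget $2r\eps' n^2=4\eps' n^2$ once $\eps'<1/64$. No ``careful accounting'' over equal-length segments can fix this; the failure is that path length ignores how many vertices change membership at a single step.

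The paper avoids this by going in the opposite direction: it \emph{deletes} edges. For each non-branching path it applies a $(C_1,C_2,\eps')$-simplification (Definition~\ref{def:simplification}, Lemma~\ref{lem:two_clique_partition}): using that the cut $(C_1\setminus C_2,\,C_2\setminus C_1)$ is induced $M_2$-free (as $G$ is $C_4$-free), Claim~\ref{clm:two-partition} partitions $C_1\setminus C_2$ into $1/\eps'$ \emph{equal-size} blocks, and emptying only the diagonal cuts costs at most $\eps' n^2$ edges while collapsing the path to length at most $2/\eps'$. Claim~\ref{clm:tree_is_the_same_after_fix} shows the rest of the clique tree is unaffected, and summing over the at most $2r$ paths yields Item~\ref{item:chordal_x_part:dist}. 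Because edges are removed, Item~\ref{item:x_part:close_cliques} is no longer free and is established separately as part of Definition~\ref{def:simplification}.
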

The proof of Lemma~\ref{lem:chordal_x_part} can be found in Section~\ref{subsec:chordal_x_part}.

In order to state the next two main supporting lemmas (and in order to define the family $\calF(\eps)$, referred to in Lemma~\ref{lem:chordal_cover}), we introduce the next definition.

  \begin{definition}\label{def:chordal-partition}
	Let $T$ be a tree
and let $\calK(T)$ be the set of connected subgraphs of $T$ (subtrees).
Designating a part for each $K\in \calK(T)$, we
define a homogeneous partition function $\phi_T$ over $\calK(T) \times \calK(T)$ as follows.
For every $K_1,K_2 \in \calK(T)$, we let $\phi_T(K_1,K_2) = 1$ if and only if $K_1$ and $K_2$ intersect.
\end{definition}

In Section~\ref{subsec:P-T-lemmas} we prove the next two lemmas.
\begin{lemma} \label{lem:P_accepts_trees}
	Let $T$ be a tree and let $G$ be a chordal graph  that has a clique tree $T_G$ isomorphic to $T$. Then $G$ satisfies 
$\phi_T$.
	Furthermore, there exists a witness partition of $G$ for $\phi_T$ 
such that the following holds. Consider any vertex $v$ in $G$ and a clique $C$ that it belongs to.
	Then the part that $v$ 
belongs to in the aforementioned witness partition
	corresponds to a subgraph of $T$ such that the isomorphic subgraph in $T_G$ contains $C$.
\end{lemma}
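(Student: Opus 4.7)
The main ingredient is the classical Helly-type property of clique trees: for every vertex $v \in V(G)$, the set $T_G^v$ of nodes of $T_G$ whose corresponding maximal cliques contain $v$ induces a connected subtree of $T_G$. This follows directly from Definition~\ref{def:clique-tree}, since for any two maximal cliques $C, C' \ni v$ the definition forces every node on the $C$--$C'$ path in $T_G$ to contain $C \cap C' \supseteq \{v\}$. Via the given isomorphism $T_G \cong T$, I push each $T_G^v$ to a subtree $K_v \in \calK(T)$; note $K_v$ is non-empty because every vertex of $G$ lies in at least one maximal clique.

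The proposed witness partition is $\{V_K\}_{K \in \calK(T)}$, where $V_K = \{v \in V(G) : K_v = K\}$. These sets are pairwise disjoint and cover $V(G)$ (with some parts possibly empty, which is immaterial). The task then reduces to verifying that this partition realizes $\phi_T$. Take $u \in V_{K_1}$ and $v \in V_{K_2}$, and consider both directions: if $K_1 \cap K_2 \neq \emptyset$, pick any node of $T$ in the intersection; the corresponding maximal clique of $G$ contains both $u$ and $v$, so $(u,v) \in E(G)$. Applied with $K_1 = K_2 = K$ (and $u \neq v$), this also shows that $V_K$ is a clique, matching $\phi_T(K,K) = 1$. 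Conversely, if $K_1 \cap K_2 = \emptyset$, then assuming $(u,v) \in E(G)$ the edge $\{u,v\}$ extends to some maximal clique $C^*$ of $G$, whose node would lie in $T_G^u \cap T_G^v$, contradicting disjointness of the images. Hence the cut is empty and $\phi_T(K_1,K_2)=0$ is consistent with $G$.

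For the ``furthermore'' assertion, let $v \in V(G)$ and let $C$ be any clique of $G$ containing $v$. Extend $C$ greedily to a maximal clique $C^* \supseteq C$. Since $v \in C^*$, the node representing $C^*$ belongs to $T_G^v$, which is exactly the image in $T_G$ of the subtree $K_v$ indexing $v$'s part. Thus the subgraph of $T_G$ corresponding to $v$'s part contains the node $C^*$, and $C^* \supseteq C$, as required.

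The argument is essentially a single observation (the Helly property), so I expect no serious obstacle; the only care needed is to handle both directions of the cut characterization and the diagonal case $K_1 = K_2$ simultaneously, and to remember that every vertex lies in at least one maximal clique so that $K_v$ is always a non-empty subtree.
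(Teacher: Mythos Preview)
Your proposal is correct and follows essentially the same approach as the paper. The paper isolates the Helly-type property you invoke as a separate claim (that the maximal cliques containing a fixed vertex induce a connected subtree of $T_G$), then assigns each vertex to the part indexed by that subtree and verifies the edge/non-edge constraints exactly as you do; you additionally spell out the ``furthermore'' clause explicitly, which the paper leaves implicit in the construction.
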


\begin{lemma} \label{lem:strict_prop_accepts_only_chordal}
	For any tree $T$, every graph in  
$\calP_{\phi_T}$ is chordal.
\end{lemma}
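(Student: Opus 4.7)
The plan is to exhibit, for every $G \in \calP_{\phi_T}$, a clique tree for $G$, which by Theorem~\ref{thm:chordal-iff-clique-tree} implies that $G$ is chordal. Fix a witness partition $(V_K)_{K\in \calK(T)}$ of $G$ for $\phi_T$, and for each vertex $v$ let $K(v) \in \calK(T)$ be the unique subtree with $v \in V_{K(v)}$. Since $K \cap K = K \neq \emptyset$ we have $\phi_T(K,K)=1$, so each $V_K$ is itself a clique; combined with the definition of $\phi_T$ on distinct parts, this yields $(u,v)\in E(G) \iff K(u) \cap K(v) \neq \emptyset$. Thus $G$ is precisely the intersection graph of the family of subtrees $\{K(v):v\in V(G)\}$ of $T$.

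Next, for each $t \in V(T)$ I would set $C_t \coloneq \{v \in V(G) : t \in K(v)\}$. Every such $C_t$ is a clique of $G$, since any $u,v\in C_t$ share the point $t \in K(u)\cap K(v)$. Conversely, every maximal clique $Q$ of $G$ equals some $C_t$: the subtrees $\{K(v): v\in Q\}$ are pairwise intersecting, so by the Helly property for subtrees of a tree they share a common point $t$, giving $Q \subseteq C_t$, and maximality forces equality.

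To build a clique tree $T_G$ I would start from $T$, contract every edge $\{t,t'\}$ with $C_t = C_{t'}$, and suppress nodes whose $C_t$ is not maximal by merging each such node into an adjacent one. This produces a tree whose nodes are in bijection with the maximal cliques of $G$. The property required by Definition~\ref{def:clique-tree}, that every clique on the $T_G$-path between maximal cliques $C$ and $C'$ contains $C \cap C'$, is equivalent to the running-intersection property: for every vertex $v$ the set of maximal cliques of $G$ containing $v$ induces a subtree of $T_G$. But this set is the image of the subtree $K(v) \subseteq T$ under the contraction/suppression, so it remains connected and the property holds; applying Theorem~\ref{thm:chordal-iff-clique-tree} finishes the argument.

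I expect the main obstacle to be the bookkeeping in the contraction/suppression step, ensuring $T_G$ is a well-defined tree whose nodes are in bijection with the maximal cliques; conceptually everything reduces to the Helly property for subtrees of a tree together with the fact that $K(v)$ is a subtree for each $v$. A self-contained alternative that avoids building $T_G$ is to show directly that $G$ contains no induced $C_\ell$ for $\ell \ge 4$: given such a cycle with subtrees $K_0,\dots,K_{\ell-1}$, pick $x_i \in K_i \cap K_{i+1}$ and consider the closed walk $P_T(x_0,x_1)\cdots P_T(x_{\ell-1},x_0)$ in $T$; each edge of $T$ lies in at most two of the segments, which must correspond to consecutive indices mod $\ell$ because non-consecutive $K_i, K_j$ are disjoint. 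Since a closed walk in a tree must traverse each edge an even number of times, for $\ell = 4$ this immediately contradicts the fact that among four points in a tree at most one of the three pairings can have disjoint connecting paths (the quartet topology), and a similar parity argument, combined with passing to a suitable subcycle, rules out $\ell \ge 5$.
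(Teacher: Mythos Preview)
Your opening paragraph is exactly the paper's proof: once you observe that the witness partition gives each vertex $v$ a subtree $K(v)$ of $T$ with $(u,v)\in E(G) \iff K(u)\cap K(v)\neq\emptyset$, you have shown that $G$ is a \emph{subtree graph} in the sense of Definition~\ref{def:subtree-graph}. The paper stops there and invokes Gavril's theorem (Theorem~\ref{thm:Gavril}), which is precisely the statement that subtree graphs are chordal. Everything after your first paragraph is a self-contained reproof of (one direction of) Gavril's theorem.

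Your clique-tree construction via the sets $C_t$ and the Helly property is a standard and correct way to prove that direction; the contraction/suppression step does work (if $C_t$ is not maximal, the containing maximal clique is some $C_s$, and every $K(u)$ with $u\in C_t$ contains the whole $t$--$s$ path, so the neighbor $t'$ of $t$ toward $s$ satisfies $C_t\subseteq C_{t'}$), though as you note the bookkeeping is tedious. Your second alternative is also viable, but the parity/quartet argument you sketch is more convoluted than necessary. The clean direct argument is: for an induced cycle $v_0,\dots,v_{\ell-1}$ with $\ell\ge 4$, pick any edge $e$ of $T$ separating the disjoint subtrees $K_0$ and $K_2$; then $e\in K_1$ (since $K_1$ meets both sides), and since none of $K_3,\dots,K_{\ell-1}$ meets $K_1$, none of them contains $e$, so by induction along the chain $K_3,K_4,\dots,K_{\ell-1}$ they all lie on the $K_2$-side of $e$ --- contradicting that $K_{\ell-1}$ meets $K_0$. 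Either route is fine; the paper's citation to Gavril is simply the shortest path.
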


We are now ready to prove Lemma~\ref{lem:chordal_cover}.

\begin{proof}[Proof of Lemma~\ref{lem:chordal_cover}]
We first define, for any given tree $T$, a semi-homogeneous partition function $\tilde{\phi}_T$, based on
$\phi_T$. The partition function $\tilde{\phi}_T$ is defined over $(\calK(T) \cup \calK'(T)) \times (\calK(T) \cup \calK'(T))$,
where for each node $x$ in $T$ we have a corresponding part $\pi(x) \in \calK'(T)$.
\begin{itemize}
\item For each pair of parts $K_1,K_2 \in \calK(T)$, we let $\tilde{\phi}_T(K_1,K_2) = \phi(K_1,K_2)$. \item     For each pair of nodes $x_1$ and $x_2$ in $T$, we consider the corresponding parts $\pi(x_1),\pi(x_2)\in \calK'(T)$ let $\tilde{\phi}(\pi(x_1),\pi(x_2))=0$.
\item For each node $x$ in $T$ and part $K \in \calK(T)$, we let $\tilde{\phi}(\pi(x),K)=\bot$ if
$x\in K$, otherwise $\tilde{\phi}(\pi(x),K)=0$.
\end{itemize}
We now set
	$$\calF(\eps) = \{\calP_{\tilde{\phi}_T}: \; T \mbox{ is a tree of size at most }  2^{150}/ \eps^{25} \}\;.$$

	A well known result on Catalan numbers bounds the number of unlabeled trees with $k$ vertices by $2^{2k}$. Therefore $|\calF(\eps)| \leq 2^{2^{31}/\eps^{7}}$.
By the definition of 
$\tilde{\phi}_T$,
the number of parts
over which $\tilde{\phi}_T$ is defined is 
upper bounded by $2^{2^{150}/ \eps^{25} + 1}$, so that
$\calF(\eps) \subset \tcalHPP^k$, for 
		$k = \exp(2^{150}/ \eps^{25})$.
It remains to prove that $\calF(\eps)$ is an $\eps$-cover of the family of chordal graphs.

We start by establishing Condition~\ref{it:G-in-P} in the definition of an $\eps$-cover (Definition~\ref{def:cover}).	
	Let $G= (V,E)$ be a chordal graph. We transform it into a graph that belongs to some $\calP$ in
$\calF(\eps)$ by performing the following steps.
\begin{enumerate}
\item Apply Lemma~\ref{lem:chordal_everything_for_assaf_and_lior} to $G$ with $\eps' = \eps_1 = \eps/8$.
let $G_1= (V,E_1)$ be the graph obtained (denoted $G'$ in the lemma)
  and let $Q \subseteq V$ and $I\subseteq V$ be as defined in the lemma.
  In particular we have that $G_1[Q]$ a chordal graph that is $r$-clique coverable for $r\leq  2^{20}/(\eps_1)^{12} = 2^{60}/(\eps)^{12}$
  and that $\Delta(G,G_1) \leq \eps/8$.

\item  Apply Lemma~\ref{lem:chordal_x_part} to $G_1[Q]$ with $r = 2^{60}/(\eps)^{12}$ and $\eps'= \eps_2=\eps/(32r)$ to obtain
a graph $G'_2 = (Q,E'_2)$.
\item Let $G_2=(V,E_2)$ be the graph obtained from $G_1$ by replacing $G_1[Q]$ with $G'_2$ (all other edges in $G_2$ are as in $G_1$).
By Lemma~\ref{lem:chordal_x_part}, $\Delta(G_1,G_2) \leq 4r \cdot \eps_2 = \eps/8$.
\item Let $G_3$ be the graph obtained from $G_2$ by removing edges in the cut $(I,Q)$ as described next. By Lemma~\ref{lem:chordal_everything_for_assaf_and_lior}, the neighbors of each vertex $v\in I$ constitute a clique $C(v)\subset Q$ in $G_1$.
    By 
    Lemma~\ref{lem:chordal_x_part},
    there exists a maximal clique $C'(v)$ in $G_2[Q]=G'_2$ such that $|C(v)\setminus C'(v)| \leq \eps_2 n$.
    For each $v\in I$ and $u\in C(v)\setminus C'(v)$, we remove the edge $(v, u)$.
     (so that the neighbors of $v$ in $G_3$ constitute a clique).
    Observe that $\Delta(G_2, G_3) = \frac{1}{n^2}\cdot \sum _{v\in I} |C'(v)\setminus C(v)| \leq \eps_2 \leq \eps /8$.
\end{enumerate}
	By Lemma~\ref{lem:chordal_x_part}, $G_2[Q] = G'_2$ has a clique tree $T_{G'_2}$ with $4r/\eps_2 \leq 2^{150}/ \eps^{25}$ nodes.
Let $T_2$ be the tree it is isomorphic to (i.e., ignoring the correspondence in $T_{G'_2}$ between nodes and cliques).
    We now show that $G_3 \in \calP_{\tilde{\phi}_{T_2}}$.
	By Lemma~\ref{lem:P_accepts_trees}, $G_2[Q] \in \calP_{{\phi}_{T_2}}$. Namely, there exists an assignment
of the vertices in $Q$ to $\calK(T_2)$ (the connected subgraphs of $T_2$) that obeys the constraints imposed by
$\phi_{T_2}$. 
As for the vertices in $I$, consider each vertex $v\in I$, and the maximal clique $C'(v)$, as defined above. Let $x(C'(v))$ be the node in $T_2$ that $C'(v)$ corresponds to in $T_{G'_2}$. Then $v$ is assigned to $\pi(x(C'(v)))\in \calK'(T_2)$.

To verify that the assignment obeys $\tilde{\phi}_{T_2}$, consider any pair of vertices $v_1$ and $v_2$ in $G$. If they both belong to $Q$, then, as noted above, the assignment obeys $\phi_{T_2}$ and hence $\tilde{\phi}_{T_2}$. If they both belong to $I$, then they are assigned to parts $K_1$ and $K_2$ that are both in $\calK'(T_2)$, and $\tilde{\phi}_{T_2}(K_1,K_2)=0$ which is consistent with the fact that there is no edge between $v_1$ and $v_2$ in $G_3$ as $I$ is an independent set. Finally, consider the case that $v_1\in I$ and $v_2\in Q$.

Let $K_1$ and $K_2$ be the parts they are assigned to, respectively. Recall that by
	(the second part of) Lemma~\ref{lem:P_accepts_trees}, every vertex in $N_{G_3}(v_1)$ is assigned to a connected subgraph in $T_2$
	such that the isomorphic subgraph in $T_{G'_2}$ contains $C'(v)$. Therefore, by the definition of $\tilde{\phi}_{T_2}$, if $v_2$ is a neighbor of $v_1$, then  $\tilde{\phi}_{T_2}(K_1,K_2)=\bot$, and if $v_2$ is not a neighbor of $v_1$, then either
$\tilde{\phi}_{T_2}(K_1,K_2)=\bot$ or $\tilde{\phi}_{T_2}(K_1,K_2)=0$.

By the above sequence of transformations,
$$\Delta(G, \calF(\eps)) \leq \Delta(G, G_1) + \Delta(G_1, G_2) + \Delta(G_2, G_3) \leq \eps\cdot \frac{3}{8} < \eps/2\;.$$

It remains to show that every graph $G\in \calP$ for  $\calP \in \calF(\eps)$ is chordal (thus
establishing Condition~\ref{it:Gprime-in-union} in Definition~\ref{def:cover}).
Let $T$ be a tree for which $G \in \calP_{\phi_T}$.
	Assume by way of contradiction that there exist $k>3$ vertices, $v_1,\dots,v_k$ such that the subgraph of $G$ induced by these vertices is a cycle in $G$ (where $v_i$ is connected to $v_{i+1}$ for every $i \in [k-1]$ and $v_k$ is connected to $v_1$).
If all $k$ vertices $v_1,\dots,v_k$ belong to parts in $\calK(T)$, then we reach a contradiction by
Lemma~\ref{lem:strict_prop_accepts_only_chordal} (since $\tilde{\phi}_T(K_1,K_2)=\phi_T(K_1,K_2)$ for every pair of parts $K_1,K_2\in \calK(T)$).
 Therefore, at least one of these vertices belongs to a part $K' \in \calK'(T)$, where $K'=\pi(x)$ for some node $x$ in $T$.
 Assume without loss of generality that $v_1 \in K'$. By the definition of $\tilde{\phi}_T$, vertices $v_2$ and $v_k$ must belong to parts $K_1,K_2 \in \calK(T)$ that intersect on $x$.
 But then there is an edge between them, and we reach a contradiction (as $k > 3$).
\end{proof}

\subsection{Proof of Lemma~\ref{lem:chordal_x_part}} \label{subsec:chordal_x_part}

In order to prove Lemma~\ref{lem:chordal_x_part} we first establish several claims regarding chordal graphs.
We start with the following definition, which will play an important role in the proof of the lemma.
\begin{definition}\label{def:simplification}
		Let $G=(V,E)$ be a chordal graph, let $C_1$ and $C_2$ be two maximal cliques in $G$, and let $\eps' \in (0,1)$.
We say that a graph $G'=(V,E')$ is a {\sf $(C_1, C_2, \eps')$-simplification} of $G$ if
$G'$ is  obtained from $G$ by deleting a subset of edges in the cut $(C_1\setminus C_2, C_2\setminus C_1)$ and the following conditions hold.
\begin{enumerate}
			\item $\Delta(G, G') \leq \eps'$.\label{item:G-Gp-close}
\item\label{item:short-path} The
subgraph of $G'$ induced by $C_1 \cup C_2$ is chordal
and  has a clique tree that is a path of length at most $2/\eps'$, with $C_1$ and $C_2$ as its endpoints.
			\item \label{item:close-clique}
For every clique  $C \subseteq C_2\cup C_1$ in $G$, there exists a maximal clique $C'$ in $G'$ such that $|C \setminus C'| \leq \eps' n$.
		\end{enumerate}
\end{definition}

The proof of the next lemma builds on~\cite{GS19}, and is provided in Appendix~\ref{app:chordal}.

	\begin{lemma}\label{lem:two_clique_partition}
Let $G=(V,E)$ be a chordal graph, let $C_1$ and $C_2$ be two maximal cliques in $G$,
and let $\eps' \in (0,1)$.
Then $G$ has a $(C_1, C_2, \eps')$-simplification $G' = (V,E')$.
\end{lemma}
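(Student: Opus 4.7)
The plan is to reduce to a chain-graph approximation problem inside $G[C_1 \cup C_2]$, where chordality pins down the structure very rigidly. I would write $A = C_1 \setminus C_2$, $B = C_2 \setminus C_1$, and $S = C_1 \cap C_2$; since every vertex of $S$ is joined in $G$ to every vertex of $A \cup B$, the only nontrivial structure of $G[C_1 \cup C_2]$ is the bipartite graph between $A$ and $B$. The key observation is that this bipartite graph contains no induced $2K_2$: two disjoint edges $a_1 b_1$ and $a_2 b_2$ with $a_1 b_2, a_2 b_1 \notin E$ would, together with the clique edges $a_1 a_2 \in C_1$ and $b_1 b_2 \in C_2$, form an induced $C_4$ on $\{a_1, b_1, b_2, a_2\}$ in $G$, contradicting chordality. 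Hence the family $\{N_G(a) \cap B : a \in A\}$ is a chain under inclusion, and after sorting $B$ accordingly each $N_G(a) \cap B$ is an initial segment of the sorted order.

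I would then construct $G'$ by quantizing this chain. Set $b = \eps' n$, and for each $a \in A$ remove the last $|N_G(a) \cap B| \bmod b$ neighbors of $a$ in the sorted order of $B$, so that $|N_{G'}(a) \cap B|$ becomes a multiple of $b$. This preserves the chain property, so $G'[C_1 \cup C_2]$ remains chordal; it deletes fewer than $|A| \cdot b \leq \eps' n^2$ edges (giving condition~1); and it leaves at most $\lceil 1/\eps' \rceil$ attained thresholds $t \in \{0, b, 2b, \dots\}$. A direct enumeration shows the maximal cliques of $G'[C_1 \cup C_2]$ are precisely $C_2 = S \cup B$ together with the cliques $S \cup A_t \cup N_t$ for each attained $t$, where $A_t = \{a \in A : |N_{G'}(a) \cap B| \geq t\}$ and $N_t$ is the prefix of $B$ of length $t$. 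These line up in a path clique tree with $C_1 = S \cup A$ at one end (the threshold $t=0$ is attained because maximality of $C_1$ in $G$ forces some $a \in A$ to have $N_G(a) \cap B = \emptyset$) and $C_2$ at the other end (maximality of $C_2$ in $G$ forces $|N_G(a) \cap B| < |B|$ for every $a$, so $C_2$ remains maximal in $G'$), and it has at most $\lceil 1/\eps' \rceil \leq 2/\eps'$ edges, giving condition~2.

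For condition~3, given a clique $C \subseteq C_1 \cup C_2$ in $G$, decompose $C = C_A \cup C_S \cup C_B$ with $C_A \subseteq A$, $C_S \subseteq S$, $C_B \subseteq B$; if $C_A = \emptyset$ take $C' = C_2 \supseteq C$, and otherwise let $a^*$ attain $\min_{a \in C_A} |N_G(a) \cap B|$, set $t = |N_{G'}(a^*) \cap B|$, and let $C'$ be any maximal clique of $G'$ extending $S \cup A_t \cup N_t$. Since rounding is monotone in the chain, every $a \in C_A$ satisfies $|N_{G'}(a) \cap B| \geq t$, so $C_A \subseteq A_t \subseteq C'$; trivially $C_S \subseteq S \subseteq C'$; and since the $N_G(a)$ form a chain, $C_B \subseteq N_G(a^*) \cap B$, which differs from $N_t$ by at most $|N_G(a^*) \cap B| - t < b = \eps' n$ elements. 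Hence $|C \setminus C'| < \eps' n$, as required.

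The main technical obstacle is the initial structural step, namely translating chordality into the no-$2K_2$ condition on the bipartite graph and hence into the chain characterization. Once this rigid structure is in hand, the quantization of neighborhood sizes is a routine combinatorial manipulation, and the maximality hypotheses on $C_1$ and $C_2$ in $G$ provide precisely the extremal behavior ($\min_a |N_G(a) \cap B| = 0$ and $\max_a |N_G(a) \cap B| < |B|$) that is needed to keep $C_1$ and $C_2$ as the two endpoints of the resulting path clique tree.
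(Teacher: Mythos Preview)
Your proposal is correct and takes essentially the same approach as the paper: both extract the chain (induced-$M_2$-free) structure of the bipartite graph between $C_1\setminus C_2$ and $C_2\setminus C_1$ and then quantize it so that $G'[C_1\cup C_2]$ has only $O(1/\eps')$ maximal cliques arranged on a path. The only cosmetic difference is the quantization scheme --- the paper (via a claim from~\cite{GS19}) equipartitions the $A$-side into $1/\eps'$ equal blocks and empties the diagonal cuts, whereas you round each $|N_G(a)\cap B|$ down to a multiple of $\eps' n$ --- and your use of the maximality of $C_1$ and $C_2$ to fix the path endpoints mirrors the paper's analysis.
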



\begin{claim} \label{claim:leaves_are_X}
Let $G$ be a chordal graph, let $T_G$ be a clique tree of $G$, and let $\calC$ 
be a clique cover of $G$.
	Then all leaves in $T_G$ 
belong to $\calC$.
\end{claim}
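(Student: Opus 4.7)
The plan is to argue by contradiction using the ``running intersection'' consequence of the clique-tree property. Suppose some leaf $C$ of $T_G$ does not belong to $\calC$, and let $C'$ be its unique neighbor in $T_G$. The goal is to show $C \subseteq C'$, which contradicts maximality of $C$ (since distinct nodes of a clique tree correspond to distinct maximal cliques).

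To establish $C \subseteq C'$, I would pick an arbitrary $v \in C$. Because $\calC$ covers $V$, there exists $C_v \in \calC$ with $v \in C_v$, and by assumption $C_v \neq C$. Now apply Definition~\ref{def:clique-tree}: both $C$ and $C_v$ contain $v$, so $v \in C \cap C_v$, hence every node on the $C$-to-$C_v$ path in $T_G$ contains $v$. Since $C$ is a leaf whose only neighbor is $C'$, that path must pass through $C'$, giving $v \in C'$.

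Since $v \in C$ was arbitrary, $C \subseteq C'$. But $C$ is a maximal clique of $G$, so either $C = C'$ or $C \subsetneq C'$ (contradicting maximality); the former is ruled out by the one-to-one correspondence in Definition~\ref{def:clique-tree} between nodes of $T_G$ and maximal cliques of $G$. Either way we reach a contradiction, so every leaf of $T_G$ must lie in $\calC$.

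No real obstacle here; the only mild subtlety is justifying the running-intersection step directly from the stated definition rather than citing it as a separate fact, but this follows immediately by taking the two cliques in the definition to be $C$ and $C_v$ and noting $v \in C \cap C_v$.
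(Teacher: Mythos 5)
Your proof is correct and follows essentially the same route as the paper: assume a leaf $C \notin \calC$, use the clique-tree (running-intersection) property to show every vertex of $C$ lies in its unique neighbor $C'$, and contradict maximality of $C$. The paper phrases this as $C \cap \barC \subseteq C'$ for all $\barC \in \calC$ together with $C \subseteq \bigcup_{\barC \in \calC}\barC$, which is the same argument stated setwise rather than vertex by vertex.
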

\begin{proof}
	Assume by way of contradiction that there exists a leaf (clique) $C_1$ in $T_G$ that does not belong to $\calC$. Let $C_2$ be  
the parent of $C_1$ in $T_G$. Since $C_2$ is on the path between $C_1$ and every $\barC \in \calC$, we have that
$C_1 \cap \barC \subseteq C_2$ for every  $\barC \in \calC$.
As $C_1 \subseteq \bigcup_{C\in\calC} C$, we get that 
$C_1 \subseteq C_2$, in contradiction to the maximality of $C$.
\end{proof}


\begin{claim} \label{clm:clique_in_path_is_in_union}
	Let $G$ be a chordal graph with 
  a clique tree $T_G$ and a clique cover $\calC$. Let $C_1, C_2$ be two maximal cliques in $G$, such that
the path between them in $T_G$ is non-branching, and 
no internal node on the path corresponds to a maximal clique in $\calC$.
  Let the set of maximal cliques 
  on this path be denoted by $\calP$.
	Then a maximal clique $\barC$ is in $\calP$ if and only if $\barC \subset C_1 \cup C_2$.
\end{claim}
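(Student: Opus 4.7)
The plan is to prove both implications by exploiting two features of the setup: the clique tree's defining property that for any two maximal cliques $C,C'$, every node on the $C$-to-$C'$ path in $T_G$ contains $C\cap C'$; and the non-branching plus no-cover-nodes-in-the-interior assumption, which forces all ``escape routes'' from the path $\calP$ to leave through one of its endpoints $C_1$ or $C_2$.

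For the forward direction ($\barC\in\calP \Rightarrow \barC\subseteq C_1\cup C_2$), I would first dispose of the trivial cases $\barC=C_1$ and $\barC=C_2$, and then assume $\barC$ is an internal node of $\calP$. By hypothesis $\barC\notin\calC$, so for any $v\in\barC$ I can pick some $C^*\in\calC$ with $v\in C^*$ and $C^*\neq\barC$. Because $\barC$ is an internal node of a non-branching path, it has degree exactly two in $T_G$ with both neighbors on $\calP$; hence the unique $T_G$-path from $\barC$ to $C^*$ first travels along $\calP$ and, since no internal node of $\calP$ has degree $>2$, can only exit $\calP$ at $C_1$ or at $C_2$. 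So $C_1$ or $C_2$ lies on the $\barC$-to-$C^*$ path, and the clique-tree property gives $v\in\barC\cap C^*\subseteq C_1\cup C_2$. Ranging over $v\in\barC$ yields $\barC\subseteq C_1\cup C_2$.

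For the reverse direction I would argue by contradiction: suppose $\barC$ is maximal with $\barC\subseteq C_1\cup C_2$ but $\barC\notin\calP$. The $T_G$-path from $\barC$ to $\calP$ enters $\calP$ at a unique node $C^*$. If $C^*$ were an internal node of $\calP$, it would have three $T_G$-neighbors (one toward each end of $\calP$ and one toward $\barC$), violating non-branching; hence $C^*\in\{C_1,C_2\}$. Assume w.l.o.g.\ $C^*=C_1$, so the $T_G$-path from $\barC$ to $C_2$ passes through $C_1$. The clique-tree property then gives $\barC\cap C_2\subseteq C_1$, whence $\barC=(\barC\cap C_1)\cup(\barC\cap C_2)\subseteq C_1$. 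By maximality of $\barC$ this forces $\barC=C_1\in\calP$, a contradiction.

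The only delicate point, and the step I expect to require the most care in writing up, is justifying the topological claim that all exits from $\calP$ are through $C_1$ or $C_2$. This is really a direct consequence of the non-branching hypothesis (every internal node of $\calP$ has degree exactly $2$ in $T_G$), but both directions of the proof hinge on it, so I would state it once as a short preliminary observation and then invoke it in both arguments. Everything else reduces to the clique-tree intersection property and maximality.
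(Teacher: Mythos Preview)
Your proposal is correct and follows essentially the same approach as the paper's proof: both directions use the non-branching hypothesis to force any path leaving $\calP$ to exit through $C_1$ or $C_2$, then apply the clique-tree intersection property (together with maximality in the reverse direction). Your write-up is in fact a bit more careful than the paper's in separating out the endpoint cases and in justifying the ``entry point'' argument explicitly, but the underlying ideas are identical.
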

\begin{proof}
	Let $\barC \in \calP$ be a maximal clique in $G$, let $v$ be a vertex in $\barC$
and let $C(v)$ be the maximal clique in the clique cover that $v$ belongs to. Since the path between $C_1$ and $C_2$ in $T_G$ is non-branching and $C(v)$ does not 
correspond to any internal node on this path,
$C(v)$ is either on the side of $C_1$ in $T_G$ or on the side of $C_2$ (see Definition~\ref{def:side} for the notion of ``side''). If the first case,
as $T_G$ is a clique tree, we get that $v \in C(v)\cap \barC \subseteq  C_1$, and in the latter case we get that $v\in C_2$.
Thus $\barC \subset C_1 \cup C_2$.

	To prove the other direction, let $\barC$ be some maximal clique, such that $\barC \subset C_1 \cup C_2$. Let us assume by way of contradiction that $\barC \notin \calP$.
	Assume without loss of generality that $\barC$ is on the same side of $T_G$ 
as $C_1$. Therefore $C_1$ is on the path between $\barC $ and $C_2$, and thus $\barC \cap C_2 \subset C_1$. As $\barC \subset C_1 \cup C_2$, this implies that $\barC \subset C_1$, contradicting the maximality of $\barC$.
\end{proof}

We build on Claim~\ref{clm:clique_in_path_is_in_union} to prove the next claim.
\begin{claim} \label{clm:tree_is_the_same_after_fix}
	Let $G$ be a chordal graph with a clique graph $T_G$ and an $r$-clique-cover $\calC$. 
Let $C_1, C_2$ be two maximal cliques in $G$, such that the path between them in $T_G$ is non-branching and
no internal node on this path corresponds to a maximal clique in $\calC$.
	Let $G'$ be a $(C_1,C_2,\eps')$-simplification of $G$.
	Then
	\begin{enumerate}
		\item $G'$ is $r$-clique-coverable, with $\calC$ as a clique cover. 
                    \label{item:r_coverability}
		\item $G'$ is chordal and has a clique tree $T_{G'}$, that is the same as $T_G$, except
that the path between $C_1$ and $C_2$ in $T_G$ is replaced by a different path in $T_{G'}$.
           \label{item:T-G-prime}
	\end{enumerate}
\end{claim}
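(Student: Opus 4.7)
The plan is to prove the two items essentially independently, leaning on Claim~\ref{clm:clique_in_path_is_in_union} and on the properties of a $(C_1, C_2, \eps')$-simplification.

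For Item~\ref{item:r_coverability}, I will show that every $\bar C \in \calC$ is still a maximal clique in $G'$. The hypothesis says no internal node of the path $\calP$ between $C_1$ and $C_2$ lies in $\calC$, so each $\bar C \in \calC$ is either $C_1$, $C_2$, or lies off the path. In the former case, all edges within $C_i$ have both endpoints in $C_i$ and are therefore disjoint from the cut $(C_1 \setminus C_2, C_2 \setminus C_1)$ that is modified, so $C_1$ and $C_2$ remain cliques. In the latter case, Claim~\ref{clm:clique_in_path_is_in_union} gives $\bar C \not\subseteq C_1 \cup C_2$; then, as in that claim's proof, $\bar C$ lies on one ``side'' of the path (Definition~\ref{def:side}), so $\bar C$ meets at most one of $C_1 \setminus C_2$ and $C_2 \setminus C_1$, and again no edge inside $\bar C$ lies in the removed cut. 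Since $G' \subseteq G$, any clique of $G'$ is a clique of $G$, so $G$-maximality upgrades to $G'$-maximality; and $\bigcup_{\bar C \in \calC} \bar C = V$ is unchanged. Hence $\calC$ is an $r$-clique-cover of $G'$.

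For Item~\ref{item:T-G-prime}, I will build the candidate tree $T_{G'}$ by deleting the interior of the old path $\calP$ from $T_G$ (leaving $C_1, C_2$ in place) and splicing in the path $\calP'$ on the maximal cliques of $G'[C_1 \cup C_2]$ guaranteed by Property~\ref{item:short-path} of Definition~\ref{def:simplification}. A preliminary step is to identify the maximal cliques of $G'$: outside $C_1 \cup C_2$ the edges of $G$ and $G'$ coincide, so by the same side argument any maximal clique of $G$ not in $\calP$ remains maximal in $G'$; inside $C_1 \cup C_2$ the set $\calP'$ is the set of maximal cliques of $G'[C_1 \cup C_2]$, and a short argument using the side property together with the $G$-maximality of $C_1$ and $C_2$ rules out enlarging any member of $\calP'$ by a vertex of $V \setminus (C_1 \cup C_2)$.

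It then remains to verify the clique-tree property of $T_{G'}$: for any two maximal cliques $M_1, M_2$ of $G'$ and any node $\tilde C$ on the $M_1$-to-$M_2$ path in $T_{G'}$, we must have $M_1 \cap M_2 \subseteq \tilde C$. I will split into cases on whether $M_1, M_2 \in \calP'$. If both lie outside $\calP'$, the clique-tree property of $T_G$ handles the part of the path that lies outside $\calP'$, and for any $\tilde C' \in \calP'$ on the path one uses $M_1 \cap M_2 \subseteq C_1 \cap C_2 \subseteq \tilde C'$ (the last inclusion since $\calP'$ is a clique-tree path of $G'[C_1 \cup C_2]$ between the maximal cliques $C_1$ and $C_2$). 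If both lie in $\calP'$, the property follows directly from $\calP'$ being a clique tree of $G'[C_1 \cup C_2]$. The mixed case, say $M_1 \in \calP'$ and $M_2 \not\in \calP'$, is the main obstacle and will require the most care: assuming WLOG that $M_2$ lies on the side of $C_1$ in $T_G$, so that $M_2 \cap C_2 \subseteq C_1$ and hence $M_1 \cap M_2 \subseteq M_2 \cap C_1$, one applies the clique-tree property of $T_G$ to the portion of the path from $M_2$ to $C_1$ and the clique-tree property of $\calP'$ to the portion from $C_1$ to $M_1$, in each case obtaining the required containment of $M_1 \cap M_2$.
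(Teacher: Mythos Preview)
Your proposal is correct and follows essentially the same approach as the paper's proof: the same side argument via Claim~\ref{clm:clique_in_path_is_in_union} for Item~\ref{item:r_coverability}, and the same splice-in-$\calP'$ construction followed by a case analysis (outside/inside/mixed) for Item~\ref{item:T-G-prime}. If anything, you are slightly more explicit than the paper in justifying that each member of $\calP'$ is maximal in all of $G'$ (not just in $G'[C_1\cup C_2]$), which the paper dismisses with ``Clearly''.
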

\begin{proof}
We first observe that $C_1$ and $C_2$ are also cliques in $G'$, because $G$ and $G'$ differ only on edges with one endpoint in
$C_1 \setminus C_2$ and one endpoint in  $C_2 \setminus C_1$.
We next show that all cliques in $G$ that are not in the induced subgraph of $C_1 \cup C_2$ are also cliques in $G'$.
Let $\barC \not \subset C_1 \cup C_2$ be a clique in $G$. 	
	Applying Claim~\ref{clm:clique_in_path_is_in_union}, we get that $\barC$ is not on the path between $C_1$ and $C_2$. Assume without loss of generality that $\barC$ is on the side of $C_1$ in $T_G$. Then $\barC \cap C_2 \subset C_1 $ implying that $\barC \cap (C_2 \setminus C_1) = \emptyset$.
Hence there is no vertex pair $(u,v)$ such that $u,v \in  \barC$, $ u\in C_1 \setminus C_2$ and $v \in C_2 \setminus C_1$,
implying that $\barC$ is a clique in $G'$.

Hence, all maximal cliques in $G$ that are not 
subsets of $C_1 \cup C_2$ are also cliques in $G'$.
As $G'$
is a subgraph of $G$, every clique in $G'$ is also a clique in $G$.
Therefore, the set of maximal cliques in $G'$ that are not contained in $C_1\cup C_2$ is the same as the set of maximal cliques in $G$ that are not contained in $C_1\cup C_2$. Let us denote this set by $\calC'$.
By the premise of the lemma regarding the cover $\calC$ 
and by Claim~\ref{clm:clique_in_path_is_in_union},
all cliques in the cover $\calC$ 
are in $\calC'$.
Therefore, $\calC'$ 
is also a clique cover of $G$, so that
$G'$ is $r$-coverable, proving Item~\ref{item:r_coverability}.

Turning to Item~\ref{item:T-G-prime},
since $G'$ is a $(C_1,C_2,\eps')$-simplification of $G$, by Item~\ref{item:short-path} in Definition~\ref{def:simplification},
The subgraph of $G'$ induced by $C_1 \cup C_2$ 
has a clique tree that is a path $\calP$ of length at most $2/\eps'$, with $C_1$ and $C_2$ as its endpoints.
	Let $T_{G'}$ be the tree derived from $T_G$ by replacing the path from $C_1$ to $C_2$ with $\calP$.
	Clearly, $T_{G'}$ contains nodes corresponding to all maximal cliques in $G'$, and does not contain any node corresponding to a set of vertices that is not a maximal clique in $G'$.

In order to prove that $T_{G'}$ is a clique tree of $G'$, we need to show that for any two maximal cliques 
$C_a$ and $C_b$, and any clique $\barC$ on the path between them in $T_{G'}$,
$C_a \cap C_a \subseteq \barC$.
We prove this claim by consider the following cases.

\begin{enumerate}	
\item The path between $C_a$ and $C_b$ does not intersect $\calP$. In this case the claim follows from the fact that $T_G$ is a valid clique tree of $G$.
	
\item
$C_a$ and $C_b$ both do not belong to $\calP$. Therefore, one of them is on the side of $C_1$ in $T_{G'}$ and the other on the side of $C_2$. Assume without loss of generality that $C_a$ is on the side of $C_1$.
We consider two subcases.
   \begin{enumerate}
     \item $\barC \notin \calP$. In this subcase the claim follows from $T_G$ being a valid clique tree, as $\barC$ must have been on the path from $C_a$ to $C_b$ in $T_G$.
	
    \item $\barC \in \calP$.
	In this subcase, since $C_1$ and $C_2$ must both be on the path between $C_a$ and $C_b$ in $T_G$,  $C_a \cap C_b \subset C_1$ and $ C_a \cap C_b \subset C_2$. Therefore, $C_a \cap C_b \subset C_1 \cap C_2$. By
the construction of $T_{G'}$ 
the path between $C_1$ and $C_2$ on $T_{G'}$ is a clique tree of the induced graph of $C_1\cup C_2$ in $G'$.  Therefore for every $\barC\in \calP$ we have that $C_a \cap C_b \subset C_1 \cap C_2 \subset \barC$.
  \end{enumerate}
	
\item $C_a$ and $C_b$ both belong to $\calP$. In this case $\barC$ must also belong to $\calP$. The claim follows from $\calP$ being a valid clique tree for $G'[C_1\cup C_2]$ (by 
    the construction of $T_G'$).
	
\item	 $C_a \in \calP$ and $C_b \notin \calP$.
	Assume without loss of generality that $C_b$ is on the side of $C_1$ in $T_{G'}$ 
	As $\calP$ is a valid clique tree (path) for $G'[C_1\cup C_2]$,
by Claim~\ref{clm:clique_in_path_is_in_union},
$C_a \subset C_1 \cup C_2$. Therefore, it is enough to prove that $C_a \cap C_b \cap C_1 \subset \barC$ and that $C_a \cap C_b\cap C_2 \subset \barC$. As $C_b \cap C_2 \subset C_1$ (by the validity of $T_G$) $C_1\cap C_b\cap C_2\subset C_a \cap C_b\cap C_1$. Hence it suffices to prove that $C_a \cap C_b\cap C_1 \subset \barC$.

	If $\barC \in \calP$, as $\calP$ is a valid clique tree for $G'[C_1\cup C_2]$, $C_a\cap C_1 \subset \barC$, and the claim follows.
	If $\barC \notin \calP$, as $C_1, \barC, C_b$ are not in $\calP$, and $T_G$ is a valid clique tree for $G$, $C_1 \cap C_b \subset \barC$, and the claim follows.
\end{enumerate}	
Item~\ref{item:T-G-prime} is thus established.
\end{proof}

We are now ready to prove Lemma~\ref{lem:chordal_x_part}.

\begin{proof}[Proof of Lemma~\ref{lem:chordal_x_part}]
	Let $T_G$ be a clique tree of $G$, let $\calC = \{\maxX_i\}_{i=1}^r$ be an $r$-clique-cover of $G$, and
	let $\calB$ be the set of maximal cliques in $G$ whose corresponding nodes in $T_G$ have degree greater than two.
	By Claim~\ref{claim:leaves_are_X},  the leaves of $T_G$ are a subset of $\calC$, 
and thus there are at most $r$ of them. As $T_G$ is a tree, there can be at most $r$ internal nodes of degree greater than 2, so that $|\calB| \leq r$.
Let $\calS = \calB \cup \calC$ 
and $k = |\calS|$ so that $k \leq 2r$.
	
	Let $\calH = \{(C_j, C'_j)\}$ be the set of pairs of 
maximal cliques in $\calS$ that have a non-branching path between them in $T_G$ that
does not contain any maximal clique in $\calC$.
As $T_G$ is a tree, $|\calH| = k-1$.
	We define a series of $k$ graphs, such that $G_0 = G$ and $G'=G_k$, where $G_{j+1}$ is obtained from $G_j$ by preforming a $(C_j, C'_j, \eps')$-simplification (which is possible by Lemma~\ref{lem:two_clique_partition}).
	
	By Claim~\ref{clm:tree_is_the_same_after_fix}, each graph $G_j$ is chordal and $r$-clique-coverable. Furthermore, there exists a clique tree $T_{G_j}$ such that the set of maximal cliques of degree greater than $2$ in $T_{G_j}$ is $\calB$, and $\calC$ is an $r$-cover of $G_j$.
In addition, for every pair $(C_\ell, C'_{\ell})$ where $\ell \leq j$, we have that
the length of a non-branching path between $C_\ell$ and $C'_\ell$
is at most $2/\eps'$.
As $|\calH| \leq 2r$, we get that the number of maximal cliques in  $T_{G'}$ is bounded by $4r/\eps'$, as claimed.

Since each $G_j$ is a $(C_j,C'_j,\eps')$-simplification of $G_{j-1}$, and each simplification is applied to a different pair of maximal cliques, Item~\ref{item:x_part:close_cliques} of the current lemma follows as well by Definition~\ref{def:simplification}.

It remains to 
upper bound the number of edges modified in the transformation of $G$ to $G'$.
We have that
	$$\Delta(G,G') \leq  \sum_j  \Delta(G_j, G_{j+1}) \leq  2r\cdot \eps' \;.$$
	The first inequality is due to the triangle inequality, and the second is due to Item~\ref{item:x_part:close_cliques} in Definition~\ref{lem:two_clique_partition}.
%
%
\end{proof}

\subsection{Proofs of Lemma~\ref{lem:P_accepts_trees} and Lemma~\ref{lem:strict_prop_accepts_only_chordal}}
\label{subsec:P-T-lemmas}

The proof of Lemma~\ref{lem:P_accepts_trees} is relatively simple, and relies on the next claim.

\begin{claim} \label{clm:cliques_with_cut_are_CC}
	Let $G$ be a chordal graph with clique tree $T_G$. Let $v$ be a vertex in $G$, and let $\calC(v)$ be the set of maximal cliques in $G$ that contain $v$. Then the subgraph of $T_G$ induced by $\calC(v)$ is connected.
\end{claim}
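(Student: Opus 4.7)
The plan is to directly invoke the defining property of a clique tree from Definition~\ref{def:clique-tree}. Recall that $T_G$ is required to satisfy the property that for every two maximal cliques $C, C'$ of $G$, every clique lying on the path between $C$ and $C'$ in $T_G$ contains $C \cap C'$.

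I would argue as follows. Let $C, C' \in \calC(v)$ be two maximal cliques both containing $v$, and let $P(C,C')$ denote the path between them in $T_G$. Since $v \in C$ and $v \in C'$, we have $v \in C \cap C'$. By the clique tree property, every maximal clique $\barC$ lying on $P(C,C')$ satisfies $C \cap C' \subseteq \barC$, and in particular $v \in \barC$. Hence every node on $P(C,C')$ belongs to $\calC(v)$.

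This shows that between any two nodes of $\calC(v)$ there is a path in $T_G$ entirely contained in $\calC(v)$, so the induced subgraph $T_G[\calC(v)]$ is connected, completing the proof.

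The argument is essentially a one-line application of Definition~\ref{def:clique-tree}, so I do not anticipate any real obstacle; the only thing to be careful about is making explicit that ``the induced subgraph'' in the statement refers to the subgraph of $T_G$ whose nodes correspond to the maximal cliques in $\calC(v)$, with an edge between two such nodes whenever they are adjacent in $T_G$, so that the path $P(C,C')$ (which consists of nodes all in $\calC(v)$) witnesses connectivity in the induced subgraph.
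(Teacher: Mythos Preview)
Your proof is correct and takes essentially the same approach as the paper: both arguments hinge on the clique-tree property that every node on the path between $C$ and $C'$ contains $C\cap C'\ni v$. The paper phrases it as a proof by contradiction (a node $C_3$ on the path not in $\calC(v)$ would have to contain $v$), while you give the equivalent direct version.
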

\begin{proof}
	Assume by way of contradiction 
that $T_G[\calC(v)]$ is not connected.
Let $C_1, C_2$ be two cliques in different components of $T_G[\calC(v)]$.
Then the shortest path between $C_1$ and $C_2$ in $T_G$ contains some maximal clique $C_3$ that does not belong to $\calC(v)$.
But by the definition of a clique tree, $C_1 \cap C_2 \subseteq C_3$, implying that $v\in C_3$, and we reached a contradiction
(since $C_3 \notin \calC(v)$).
\end{proof}


\begin{proof}[Proof of Lemma~\ref{lem:P_accepts_trees}]
	Let $G$ be a graph with clique tree $T_G$. We assign each vertex $v$ of $G$ to a part  in $\calK(T_G)$,
which we denote by $K(v)$,
as follows.
Let $\calC(v)$ be the set of maximal cliques that $v$ belong to.
	By Claim~\ref{clm:cliques_with_cut_are_CC}, the subgraph induced by $\calC(v)$ in $T_G$ is connected.
We let $K(v)$ be the part corresponding to this connected subgraph.

	We now verify that this assignment does not violate any constraints defined by $\phi_{T_G}$.
Let $u$ and $v$ be two vertices in $G$. If there is an edge between $u$ and $v$, then there is a maximal clique in $G$ to which
both $u$ and $v$ belong.
Therefore, $\calC(u) \cap \calC(v) \neq \emptyset$, so that $K(u)$ and $K(v)$ intersect and $\phi_{T_G}(K(u),K(v)) = 1$.
	If $u$ and $v$ do not have an edge between them, then there is no clique that contains both of them.
Therefore, $\calC(u) \cap \calC(v) = \emptyset$, so that $K(u)$ and $K(v)$ do not intersect and $\phi_{T_G}(K(u),K(v)) = 0$.
\end{proof}

We prove Lemma~\ref{lem:strict_prop_accepts_only_chordal} using the following definition and theorem of 
Gavril~\cite{Gavril}.

\begin{definition}\label{def:subtree-graph}
	A graph $G$ is called a {\sf subtree graph} if there exists a tree $T$ and a  mapping $M$ from the vertices of $G$ to connected subgraphs of $T$ 
(subtrees),
so that every two vertices $u$ and $v$ have an edge in $G$ if and only if their mapped subtrees $M(u)$ and $M(v)$ intersect.
\end{definition}

\begin{theorem}[{\cite[Thm.~3]{Gavril}}]\label{thm:Gavril}
	A graph $G$ is chordal if and only if it is a subtree graph.
\end{theorem}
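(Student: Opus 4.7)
The plan is to prove both directions of the characterization separately.

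For the forward direction (chordal implies subtree graph), I would apply Theorem~\ref{thm:chordal-iff-clique-tree} to obtain a clique tree $T_G$ of $G$, and for each vertex $v$ of $G$ define $M(v)$ to be the subgraph of $T_G$ induced by the maximal cliques of $G$ that contain $v$. Claim~\ref{clm:cliques_with_cut_are_CC} already tells me that $M(v)$ is connected and hence a subtree of $T_G$. The required edge-to-intersection correspondence is then immediate: $(u,v) \in E(G)$ iff some maximal clique of $G$ contains both $u$ and $v$, iff $M(u) \cap M(v) \neq \emptyset$. Thus $(T_G, M)$ is a subtree representation of $G$.

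For the backward direction (subtree graph implies chordal), I would assume a subtree representation $(T, M)$ of $G$ and, for contradiction, fix an induced cycle $v_1 v_2 \cdots v_k v_1$ in $G$ with $k \geq 4$. The key geometric fact I would use is the following property of subtrees of a tree: if $A$ and $B$ are disjoint subtrees of $T$, there is a unique shortest $T$-path $P$ between them, with endpoints $p_A \in A$, $p_B \in B$ and interior disjoint from $A \cup B$; and, crucially, any subtree $S$ of $T$ meeting both $A$ and $B$ must contain all of $P$ (because $S$ is connected, and in a tree the unique path from any point of $S \cap A$ to any point of $S \cap B$ is forced to pass through $p_A$ and $p_B$). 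I would apply this with $A = M(v_1)$ and $B = M(v_3)$, which are disjoint because $(v_1, v_3) \notin E(G)$.

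This immediately gives that both $M(v_2)$ and the union $U := M(v_1) \cup M(v_3) \cup M(v_4) \cup \cdots \cup M(v_k)$ contain all of $P$: the former because it intersects both $M(v_1)$ and $M(v_3)$; the latter because it is a connected subgraph of $T$ (consecutive subtrees in the listed order share a point thanks to the cycle edges $v_3 v_4, v_4 v_5, \ldots, v_k v_1$) containing the endpoints $p_1, p_3$ of $P$. If $P$ has length at least $2$, then any interior vertex $t$ of $P$ lies in $U \setminus (M(v_1) \cup M(v_3))$, so $t \in M(v_i)$ for some $i \in \{4, \ldots, k\}$; combined with $t \in M(v_2)$ this produces an edge $(v_2, v_i)$ of $G$, which is a chord of the induced cycle. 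If $P$ has length $1$, the single edge $p_1 p_3$ must belong to some $M(v_i)$ (a connected subgraph of a tree containing two adjacent vertices contains the edge between them), and the containing subtree holds both endpoints; by disjointness of $M(v_1)$ and $M(v_3)$ one cannot have $i \in \{1, 3\}$, so again $i \in \{4, \ldots, k\}$, and $p_1 \in M(v_2) \cap M(v_i)$ yields the chord $(v_2, v_i)$. Either way we contradict that the cycle was induced.

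The main obstacle I anticipate is the careful handling of the length-$1$ case of $P$, together with the justification that the union $U$, viewed as a subgraph of $T$, genuinely carries the path $P$ as edges and not merely as isolated vertices. This rests on the fact that a connected subgraph of a tree inherits the tree's uniqueness-of-paths property and therefore contains the entire $T$-path between any two of its vertices. Everything else in the proof is a routine translation between the clique-tree picture and the abstract subtree picture.
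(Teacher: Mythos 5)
Your proof is correct, but note that the paper does not prove this statement at all: it is imported as a black-box citation to Gavril's Theorem~3 and used only to derive Lemma~\ref{lem:strict_prop_accepts_only_chordal}, so there is no in-paper argument to compare against. What you wrote is a sound, self-contained derivation. Your forward direction is exactly the machinery the paper does develop elsewhere --- it is Theorem~\ref{thm:chordal-iff-clique-tree} plus Claim~\ref{clm:cliques_with_cut_are_CC}, and is in substance the same construction as the proof of Lemma~\ref{lem:P_accepts_trees} (assign $v$ to the connected subtree of $T_G$ spanned by the maximal cliques containing $v$). Your backward direction is the standard geodesic argument, and you handle the two genuinely delicate points correctly: that a connected subgraph of a tree contains the entire $T$-path (including its edges) between any two of its vertices, and the length-one case of the connecting path $P$. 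One presentational slip: in the union $U$ you list $M(v_1)$ and $M(v_3)$ consecutively, and those two are disjoint by construction; connectivity of $U$ comes from the chain $M(v_3), M(v_4), \dots, M(v_k), M(v_1)$ in which consecutive members intersect, which is what your parenthetical actually invokes, so this is cosmetic rather than a gap.
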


\begin{proof}[Proof of Lemma~\ref{lem:strict_prop_accepts_only_chordal}]
	Let $T$ be a tree, and let $G=(V,E)$ be a graph in $\calP_{\phi_T}$.
Let $\calV = (V_1,\dots,V_t)$ be  a witness partition of $G$ for  $\phi_T$. For any vertex $v$ let $\ind_\calV(v)$ denote the index of the part that $v$ belongs to in $\calV$.
	Let $M_T: V \rightarrow \calK(T)$ be the mapping that maps each vertex $v\in V$ to the subtree of $T$ that defines part $\ind_\calV(v)$ (recall Definition~\ref{def:chordal-partition}).
	We complete the proof by observing that by their respective definitions, $T$ and $M_T$ are a tree and a mapping as described in Definition~\ref{def:subtree-graph}. Therefore $G$ is a subtree graph, and by  Theorem~\ref{thm:Gavril} is chordal.
\end{proof}

\addcontentsline{toc}{section}{References}

\bibliography{dist-est-ref}

\appendix
\ifnum\fullver=0

\section{Proof of Theorem~\ref{thm:cover}}\label{app:cover}

In this appendix we prove the ``Cover Theorem'' from Section~\ref{sec:cover}.

\begin{proof}[Proof of Theorem~\ref{thm:cover}]
\ProofOfCoverTheorem
\end{proof}

\fi
\section{Semi-homogeneous partition properties}\label{app:shpp-csp}

In this section we prove Lemma~\ref{lem:shpp-are-ee} (stated in Section~\ref{subsubsec:intro-SHPP}).
We first introduce some notations.
For a given integer $k$, a partition function $\phi: [k]\times [k] \to \{0,1,\bot\}$,  a partition $\calV = (V_1,\dots,V_k)$ of $V = V(G)$,
and a pair of (distinct) vertices $u \in V_i$ and $v\in V_j$, we say that the pair $(u,v)$ is a \emph{violating pair} with respect to $\phi$ and $\calV$ if $\phi(i,j) = 0$ and $(u,v) \in E$ or $\phi(i,j) = 1$ and $(u,v) \notin E$.
We use $\mu_\phi(\calV)$ to denote the number of such violating pairs, normalized by $n^2$ (where $n= |V|$),
and let $\mu_\phi(G)$ denote the minimum value of $\mu_\phi(\calV)$ taken over all $k$-partitions $\calV$.
Observe that $\Delta(G,\calP_\phi) = \mu_\phi(G)$.
We denote by $\bar{\mu}_\phi(\calV)$ the number of non-violating pairs, normalized by $n^2$.
Note that as the total number of vertex pairs (violating and non-violating) normalized by $n^2$ is $\binom{n}{2} / n^2 = (1-1/n)/2$, and thus $\bar{\mu}_\phi(\calV) = (1-1/n)/2 - \mu_\phi(\calV)$. Therefore by approximating 
$\max_\calV \left\{\bar{\mu}_\phi(\calV)\right\} $
we can derive an approximation of
$\min_\calV \left\{\mu_\phi(\calV)\right\}
  = \mu_\phi(G) = \Delta(G, \calP_\phi)$ (where in both cases we mean an \emph{additive} approximation).

We 
show how  the problem of maximizing $\bar{\mu}_\phi(G)$ can be casted as a 
maximum constraint satisfaction problem, and approximate the value of an optimal solution 
using the work 
of Andersson and Engebretsen~\cite{AE02}.\footnote{The algorithm given in~\cite{AE02} is actually a variation (and generalization) of the algorithm described in~[Sec.~8]\cite{GGR}, for testing related partition problems.}

First we 
quote some needed definitions and results from~\cite{AE02}, and then show how 
Lemma~\ref{lem:shpp-are-ee} follows.

\begin{definition}
	Let $D$ be some finite domain.
An $r$-ary {\sf constraint function} on domain $D$ is a function from $D^r$ to $\{0,1\}$ nd
%
an $r$-ary {\sf constraint family} on domain $D$ is a collection of $r$-ary constraint functions on domain $D$.
\end{definition}

In the following definitions, $\calF$ is an $r$-ary constraint family on domain $D$.
\begin{definition}
The maximum number of simultaneously satisfiable constraint functions in $\calF$, denoted by $\Sigma(\calF)$, is defined by
	\begin{equation*}
		\Sigma(\calF) = \underset{(a_1, \dots , a_r)\in D^r}{\max}{|\{f\in \calF : f(a_1, \ldots, a_r)=1\}}
	\end{equation*}
\end{definition}

\begin{definition}
A {\sf constraint} on the variables $x_1,\ldots , x_n$ over $\calF$ is an $(r+1)$-tuple $(f, x_{i_1}, \ldots, x_{i_r})$, where $f\in \calF$ and $i_1, \ldots,i_r$ are distinct integers in $[n]$.
%

The  {\sf Constraint Satisfaction Problem} Max-$\calF$ is the following maximization problem: Given a collection $\calC$ of constraints on the variables $x_1, \dots, x_n$ over $\calF$, find an assignment to those variables that satisfies as many constraints as possible in $\calC$.
\end{definition}

\begin{definition}
Let 
$\calC$ be an instance of Max-$\calF$. The {\sf density} of 
$\calC$, denoted $\rho(\calC)$,
is defined as the maximum number of satisfied constraints in 
$\calC$
divided by $n^r$.
\end{definition}

Andersson and Engebretsen~\cite{AE02} consider algorithms that approximate $\rho(\calC)$ for instances $\calC$ of Max-$\calF$ when
given \emph{query access} to $\calC$. A single query to $\calC$ asks whether a constraint $(f, x_{i_1}, \ldots x_{i_r})$ belongs to $\calC$.
The following theorem states that there exists an 
algorithm that approximates Max-$\calF$ using only a small number of queries to $\calC$ (for the sake of succinctness the theorem we present here is a slightly modified version of the one in~\cite{AE02}).
\begin{theorem}[{\cite[Thm.~2]{AE02}}]\label{thm:const_sat}
	Let $\calF$ be an $r$-ary constraint family on domain $D$ and let $\calC$ be an instance of Max-$\calF$. There exists an algorithm $\hat{A}_\calF$ that for any $\eps>0$ and $\delta>0$, outputs a value $\hat{\rho}$ such that $|\hat{\rho}-\rho(\calC)| \leq \eps$ with probability at least $1-\delta$.
The query complexity of $\hat{A}_\calF$ is
	\begin{equation*}
	O\left(\frac{|\calF|\cdot \Sigma^7(\calF) \cdot r^2 \cdot \ln(|D|)}{\eps^7}\cdot \ln^2\left(\frac{\Sigma(\calF)\cdot |D|}{\eps \cdot \delta}\right)\right)\;.
	\end{equation*}
Furthermore, for $r=2$ the algorithm takes a sample of
	$O\left(\frac{\Sigma^5(\calF)\cdot \ln(|D|)}{\eps^5}\cdot \ln\left(\frac{\Sigma(\calF)\cdot |D|}{\eps \cdot \delta}\right)\right)$
uniformly selected variables and performs queries on a subset of the constraints involving pairs of sampled variables.
\end{theorem}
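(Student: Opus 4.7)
The plan is to follow the ``oracle-reducibility'' paradigm introduced by Goldreich, Goldwasser and Ron for approximating Max-Cut, adapted and generalized to arbitrary constraint families of bounded arity. The central idea is that a small uniformly random sample $S$ of variables implicitly determines, for \emph{every} candidate assignment restricted to $S$, a greedy extension strategy to the full instance whose value can be estimated using only queries on tuples drawn from $S$ (plus, in the general case, a small secondary sample).

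First I would draw a uniform sample $S \subseteq [n]$ of size $s$ (with $s$ to be chosen as in the theorem statement). For each of the $|D|^s$ possible ``templates'' $\alpha \colon S \to D$, I would define a canonical greedy completion $\hat{\alpha} \colon [n] \to D$: for every variable $x \notin S$, set $\hat{\alpha}(x)$ to the value $d \in D$ that maximizes the (scaled) number of constraints $(f, x_{i_1}, \ldots, x_{i_r}) \in \calC$ with $x$ occupying one coordinate, the remaining $r-1$ coordinates occupied by variables of $S$, and $f$ evaluating to $1$ under the induced partial assignment. Let $\rho_S(\alpha)$ denote the density of constraints satisfied by $\hat{\alpha}$ that are supported entirely on $S \cup \{x\}$ for some $x \notin S$, or entirely on $S$. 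The algorithm outputs $\hat{\rho} = \max_\alpha \tilde{\rho}_S(\alpha)$, where $\tilde{\rho}_S(\alpha)$ is an empirical estimate of $\rho_S(\alpha)$ obtained by sampling; for $r=2$ this estimate can be computed entirely from queries within $S$ itself (using an independent secondary sample $T$ of constraints incident to $S$ for $r>2$).

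The analysis splits into three steps. \textbf{Soundness} is immediate: each $\hat{\alpha}$ is a legitimate assignment, so the value it achieves is at most $\rho(\calC) \cdot n^r$. \textbf{Completeness} is the delicate direction: fixing an optimal assignment $\alpha^*$, the restriction $\alpha^*|_S$ is one of the templates considered, and per-variable greedy maximization can only beat the local contribution of $\alpha^*$, so in expectation over $S$ the greedy completion of $\alpha^*|_S$ loses at most $O(\Sigma(\calF) \cdot r/s)$ density compared to $\alpha^*$ on constraints touching $S$, and the constraints avoiding $S$ altogether contribute a negligible fraction in expectation. \textbf{Concentration} controls the estimation error: for each fixed $\alpha$, a Chernoff/Hoeffding bound gives that $|\tilde{\rho}_S(\alpha) - \rho_S(\alpha)| \leq \eps/4$ except with probability $\exp(-\Omega(\eps^2 s / \Sigma^2(\calF)))$, and taking a union bound over all $|D|^s$ templates yields the claimed sample size via $s \gtrsim \Sigma^2(\calF) \log(|D|)/\eps^2$ composed with the completeness overhead.

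The main obstacle is the completeness step, where the greedy strategy defined by the sample must be shown to be a near-optimal \emph{global} strategy and not merely optimal relative to $S$. The standard trick is a two-scale argument: one partitions $S$ into $\Sigma(\calF)/\eps$ buckets and argues, bucket by bucket, that the greedy choice made using $S$'s contribution simultaneously serves as the greedy choice when the same variable is viewed from any constant-sized witness set of $r-1$ variables; this is what forces the $\Sigma^5$ (respectively $\Sigma^7$) factor in the sample complexity and the $\eps^{-5}$ (respectively $\eps^{-7}$) dependence, since we need accuracy $\eps/\Sigma(\calF)$ at each of $\Sigma(\calF)$ levels of refinement before the greedy estimate can be plugged back into the global density bound. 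Once this completeness/soundness sandwich is established within additive error $\eps/2$ and the empirical estimates are accurate within $\eps/2$, the triangle inequality yields $|\hat{\rho} - \rho(\calC)| \leq \eps$ with probability $1-\delta$.
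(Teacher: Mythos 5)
This statement is not proved in the paper at all: it is imported verbatim as a black box from Andersson and Engebretsen~\cite{AE02} (and the paper explicitly notes that the algorithm of~\cite{AE02} is a variation of the Max-Cut/partition sampler of~\cite{GGR}). So there is no in-paper proof to compare against; I can only assess your sketch on its own terms. Your high-level architecture --- constant-size uniform sample $S$, enumeration of all $|D|^{|S|}$ templates on $S$, greedy completion, empirical estimation, union bound --- is indeed the right family of argument for this theorem.

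However, the sketch has a genuine gap in the completeness step. You claim that ``the constraints avoiding $S$ altogether contribute a negligible fraction in expectation.'' This is backwards: since $|S|$ is independent of $n$, the constraints with at least one coordinate in $S$ form an $O(r|S|/n)$ fraction of $\calC$, so essentially \emph{all} of the mass of $\rho(\calC)$ sits on constraints entirely outside $S$. Comparing the greedy completion against $\alpha^*$ only on constraints touching $S$ therefore controls almost nothing; the hard part of the~\cite{GGR}-style analysis is precisely to argue that the greedy choices for two unsampled variables $x,y$ jointly do well on the constraint $(f,x,y)$, which requires splitting $S$ into $\ell = O(\Sigma(\calF)/\eps)$ stages and running a hybrid argument in which stage~$i$'s assignment is fixed by a template on stages $1,\dots,i-1$ and stage~$i$ itself serves as a fresh random ``test set.'' Relatedly, your concentration accounting does not close: union-bounding over $|D|^{s}$ templates against a per-template failure probability $\exp(-\Omega(\eps^2 s/\Sigma^2(\calF)))$ forces the exponent to scale with $s\ln|D|$, not $\ln|D|$, so the inequality ``$s\gtrsim \Sigma^2(\calF)\ln(|D|)/\eps^2$'' is circular as written; the staged structure is what breaks this circularity and is also where the stated $\Sigma^5(\calF)/\eps^5$ (for $r=2$) and $\Sigma^7(\calF)/\eps^7$ exponents actually come from. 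As it stands, the sketch names the right ingredients but does not assemble them into an argument that would survive these two objections.
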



\begin{proof}[Proof of Lemma~\ref{lem:shpp-are-ee}]
As discussed at the start of this section, in order to approximate $\Delta(G,\calP_\phi) = \mu_\phi(G)$
to within an additive error of $\eps$, it suffices to approximate $\bar{\mu}_\phi(G)$ to within such an error.
We show how, for any $k$-part
partition function $\phi$, we can define a $2$-ary constraint family $\calF_\phi$, and for any graph $G$ we can
define an instance $\calC_{\phi, G}$ of Max-$\calF_\phi$, such that $\bar{\mu}_\phi(G) = \rho(\calC_{\phi,G})$.
Furthermore, any query to $\calC_{\phi,G}$ can be answered by performing a single query to (the adjacency matrix of) $G$.
We can therefore run the algorithm referred to in Theorem~\ref{thm:const_sat} and obtain a distance-approximation algorithm for
$\calP_\phi$.
Details follow.

\smallskip
Let $D_{\phi} =[k]$ 
and let $f^1_{\phi}$ and $f^0_{\phi}$ be two $2$-ary functions on domain $D_\phi$, defined as follows: $f^1_{\phi}(y_1, y_2) = 1$ if $\phi(y_1, y_2) \in \{1, \bot\}$ and $f^1_{\phi}(y_1, y_2) =0$ otherwise; $f^0_{\phi}(y_1, y_2) = 1$ if $\phi(y_1, y_2) \in \{0, \bot\}$ and $f^0_{\phi}(y_1, y_2) =0$ otherwise. We set the constraint family to be $\calF_\phi = \{f^0_\phi, f^1_\phi\}$.
For a graph $G= (V,E)$, let $X_{G} = \{x_v | v\in V\}$ be a set of variables, and
let
$$\calC_{\phi, G} = \left\{(f^1, x_v, x_u) | (v,u) \in E\right\} \cup \left\{(f^0, x_v, x_u) | (v,u) \not\in E\right\}$$
 be a collection of constraints on the variables $X_{G}$ over $\calF_\phi$.
Note that by the above definition of $\calC_{\phi, G}$, any query to $\calC_{\phi, G}$
(i.e., does $(f^b,x_u,x_v)$ belong to $\calC_{\phi,G}$ for $b \in \{0,1\}$ and $u,v \in V$), can be 
answered by performing a single query to (the adjacency matrix of) $G$ (on the pair of vertices $u,v$).
%
Observe that $r=2$, $|D_\phi| = k$, $|\calF_\phi| =2$, and $\Sigma(\calF_\phi)=1$ (as every pair of variables appears in exactly one constraint).
Also recall that each variable in $X_G$ corresponds to a vertex in $G$ (and each query to $\calC_{\phi, G}$ can be answered by
a query to $G$). Hence, Theorem~\ref{thm:const_sat} implies that by selecting
a sample of $\poly(1/\eps,\log k,\log(1/\delta))$
 vertices in $G$ (uniformly, independently at random) and querying the subgraph induced by the sample,
 we can get an estimate $\hat{\rho}$ such that $|\hat{\rho} - \rho(\calC_{\phi,G})| \leq \eps$
 with probability at least $1-\delta$.

\smallskip
It remains to show that $\rho(\calC_{\phi,G}) = \bar{\mu}_\phi(G)$.
For any vertex $v\in V$ and partition $\calV = (V_1,\dots,V_k)$ of $V$, let $\ind_\calV(v)$ denote the index of the part that $v$ belongs to in $\calV$. 
An assignment to the variable set $X_G$ 
is a function $g: X_G\rightarrow D_\phi$.

To show the 
$\rho(\calC_{\phi,G}) = \bar{\mu}_\phi(G)$,
we define a bijective mapping $M$ between partitions of $V$ and assignments to the variables in $X_{G}$, and show that $\bar{\mu}_\phi(\calV)$ is equal to the number of constraints in $\calC_{\phi, G}$ satisfied by the assignment $M(\calV)$,
divided by $n^2$.
The mapping is as follows. For any partition $\calV$ of $G$, $M(\calV)$ is the  assignment to $X_{G}$ satisfying $M(\calV)(x_v) = \ind_\calV(v)$ (similarly $M^{-1}(g)$ is the partition that puts vertex $v$ in part $g(x_v)$). 

Observe that any pair of variables $x_u,x_v$ appear in exactly one constraint. If $(u,v) \in E$, then this constraint is $(f^1, x_u, x_v)$, and thus it is satisfied if and only if $\phi(\ind_\calV(v), \ind_\calV(u)) \in \{1, \bot\}$. Similarly,  if $(u,v) \notin E$, then the constraint on the variable pair $x_u, x_v$ is satisfied if and only if $\phi(\ind_\calV(v), \ind_\calV(u)) \in \{0, \bot\}$. In both cases $u,v$ is a non-violating pair (with respect to $\phi$ and $\calV$) if and only if the constraint involving $x_u$ and $x_v$ is satisfied. Therefore the number of satisfied constraints in the assignment 
defined by the partition $\calV$ is equal to $\bar{\mu}_\phi(\calV)\cdot n^2$, as claimed.
Since $\rho(\calC_{\phi,G})$ is the maximum number of constraints that can be satisfied by any assignment to the variables in $X_G$, divided by $n^2$, the lemma follows.
\end{proof}
\section{General partition properties}
\label{app:gpp}

In this section we show how to obtain a distance-approximation algorithms for general partition properties (defined next and introduced in~\cite{GGR}).
We note that it is possible to
obtain essentially the same result by applying Theorem~3.2 in~\cite{HKLLS17a}. This theorem gives a general upper bound on the query complexity of approximating a certain class of graph parameters.
This class of graph parameters includes the distances to properties in a subfamily of general partition properties. In turn, this subfamily
can be shown to cover all general partition properties. We give an alternative proof, which we believe is simpler.

\begin{definition}\label{def:gpp}
	For an integer $k \geq 1$, a {\sf $k$-part partition description} $\Phi$ is  defined by two $[k]\rightarrow [0,1]$ functions, denoted $\SLB{\Phi}$ and $\SUB{\Phi}$, and two $[k]\times[k]\rightarrow [0,1]$ symmetric functions, denoted $\DLB{\Phi}$ and $\DUB{\Phi}$ such that
	$\SLB{\Phi}(i)\leq \SUB{\Phi}(j)$ for every $i \in [k]$ and
	$\DLB{\Phi}(i,j)\leq \DUB{\Phi}(i,j)$ for every $i,j \in [k]$.
	For a graph $G = (V,E)$ we say that a $k$-way partition $\calV = (V_1,\dots,V_k)$ {\sf satisfies} $\Phi$
	(with respect to $G$) if
	\begin{equation} \label{eq:gpp-prop1}
	\forall i\in[k]\;\;\;\;\;\;
	\SLB{\Phi}(i)\cdot n \;\leq\;|V_i|\;\leq \;\SUB{\Phi}(i)\cdot n\;
	\end{equation}
	and
	\begin{equation} \label{eq:gpp-prop2}
	\forall i,j\in[k]\;\;\;\;\;\;
	\DLB{\Phi}(i,j)\cdot n^2\;\leq\;|E_G(V_i,V_{j})|\;\leq\;\DUB{\Phi}(i,j)\cdot n^2\;,
	\end{equation}
	where $E_G(V_i,V_{j})$  is the set of edges in $G$ between vertices in
	$V_i$ and vertices in $V_{j}$, and $n$ is the number of vertices in $G$.
	We denote by $\calP_\Phi$  the set of graphs $G$ for which there is a partition $\calV$ that satisfies $\Phi$,
	and we use 
	$\GPP$ to denote the family of all such partition properties.
\end{definition}

Following is the central theorem of this section.
\begin{theorem} \label{thm:general_ggr_EE}
	For any $k$-part partition description $\Phi$, there exists a distance-approximation algorithm $\calA$  for the property $\calP_\Phi$ with query complexity $\poly(k,1/\eps)$. 
\end{theorem}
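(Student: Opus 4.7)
The plan is to turn the canonical property tester of Goldreich, Goldwasser and Ron~\cite{GGR} for general partition properties into a distance estimator by enumerating sample partitions rather than merely accepting or rejecting. The algorithm $\calA$ samples a set $S$ of $s=\poly(k,1/\eps)$ vertices uniformly at random, queries the induced subgraph $G[S]$, and for every $k$-partition $\calV_S=(S_1,\ldots,S_k)$ of $S$ computes the empirical cost
\[
\hat c(\calV_S)=\sum_{i=1}^k\max\{0,\SLB{\Phi}(i)-|S_i|/s,\,|S_i|/s-\SUB{\Phi}(i)\}+\sum_{i\leq j}\max\{0,\DLB{\Phi}(i,j)-\hat d_{ij},\,\hat d_{ij}-\DUB{\Phi}(i,j)\},
\]
where $\hat d_{ij}$ denotes the sample density of edges between $S_i$ and $S_j$, normalised consistently with Definition~\ref{def:gpp}. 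The output is $\hDelta=\min_{\calV_S}\hat c(\calV_S)$.

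For correctness I would establish two matching bounds, each holding with probability at least $2/3$. For the upper bound $\hDelta\leq\Delta(G,\calP_\Phi)+\eps/2$, I take an optimal partition $\calV^*$ attaining $\Delta(G,\calP_\Phi)$ and apply Hoeffding's inequality to show that its restriction $\calV^*\cap S$ has sample statistics within $\eps/(4k^2)$ of the true ones, so that $\hDelta\leq\hat c(\calV^*\cap S)\leq\Delta(G,\calP_\Phi)+\eps/2$. For the reverse bound, I would associate to each sample partition $\calV_S$ a ``canonical'' extension $\calV$ of $V$, for instance a random extension placing each $v\notin S$ into part $i$ with probability $|S_i|/s$ (or a deterministic variant based on neighbourhoods in $S$), and show via a Hoeffding-plus-union-bound argument taken simultaneously over all $k^s$ possible $\calV_S$ that every sample partition has an extension whose true cost is within $\eps/2$ of its empirical cost. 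In particular the minimiser $\hat\calV_S$ lifts to a partition $\calV$ of $V$ with $\Delta(G,\calP_\Phi)\leq\hat c(\hat\calV_S)+\eps/2=\hDelta+\eps/2$.

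The main obstacle will be this second direction: controlling the union bound over $k^s$ sample partitions while keeping $s$ polynomial. Each of the $O(k^2)$ cell statistics of a fixed $\calV$ concentrates at rate $\exp(-\Omega(s\cdot(\eps/k^2)^2))$, so absorbing a union bound of size $k^s\cdot O(k^2)$ requires $s=\Omega(k^6\log(k)/\eps^2)$, which is $\poly(k,1/\eps)$; the coupling between the empirical cost under $\hat\calV_S$ and the true cost of its extension $\calV$ is precisely the ``canonical partition'' argument underlying the tester of~\cite{GGR}, and the task is to verify that it carries over from an accept/reject guarantee to an additive estimate of the cost (in particular, that the size-violation and density-violation terms in $\hat c$ are both tracked by the sample). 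The resulting query complexity is $\binom{s}{2}=\poly(k,1/\eps)$, matching the claim.
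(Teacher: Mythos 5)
Your overall strategy---estimate the distance by minimizing an empirical cost over partitions of a sampled induced subgraph---is a reasonable one, but as written the proof has two gaps, one of which is fatal to the argument as presented. The fatal one is the union bound in the lower-bound direction. You propose to take a union bound over all $k^s$ partitions of the sample, each event (concentration of the $O(k^2)$ cell statistics of a fixed lifted partition) failing with probability $\exp(-\Omega(s\eps^2/\poly(k)))$. But then the total failure probability is $k^s\cdot\exp(-cs\eps^2/\poly(k))=\exp\bigl(s(\log k - c\eps^2/\poly(k))\bigr)$, which \emph{diverges} as $s$ grows whenever $\log k>c\eps^2/\poly(k)$, i.e.\ always. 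Increasing $s$ cannot help, because the number of events and the concentration exponent both scale linearly in $s$; your claim that $s=\Omega(k^6\log k/\eps^2)$ absorbs the union bound is an arithmetic error. This is exactly the difficulty that forces the two-stage ``canonical partition'' machinery in~\cite{GGR} (and its refinement in~\cite{AE02,FMS}): the partition of the sample must be determined by a bounded amount of sample-independent information (e.g.\ neighborhood profiles with respect to a small first-stage sample), so that the union bound ranges over a set whose size does not grow with $s$. You acknowledge that this machinery is ``precisely the argument underlying the tester,'' but deferring to it is the entire content of the proof; the plan you actually write down does not close the gap.

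A second, repairable issue is the cost function itself. Size violations cannot be repaired by edge modifications; they are repaired by reassigning vertices, and reassigning a $\sigma$ fraction of the vertices perturbs the density vector by up to $2\sigma$ in $L_1$. Consequently $\min_{\calV}\hat c(\calV)$ as you define it satisfies only $\min_{\calV}\hat c(\calV)\le\Delta(G,\calP_\Phi)\le 2\min_{\calV}\hat c(\calV)$, a multiplicative factor-$2$ guarantee rather than an additive one; you would need to weight the size-violation term by $2$ (or restrict the minimization to partitions respecting the size bounds). For contrast, the paper avoids reproving any concentration argument: it covers $\calP_\Phi$ exactly by the $(1/\gamma)^{O(k^2)}$ \emph{$\gamma$-tight} quantized descriptions $\TPD{\Phi}{\gamma}$, runs the non-adaptive tester of~\cite{FMS} (whose queries do not depend on the description) once on a shared sample for each of them, and converts the accept/reject pattern into a distance estimate via two combinatorial lemmas relating the $L_1$ distance between tight descriptions to the edge-modification distance between their property classes. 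If you want to pursue your route, the clean way is to invoke the Max-CSP approximation theorem of Andersson and Engebretsen (Theorem~\ref{thm:const_sat}), which packages the concentration-plus-extension argument you need.
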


In what follows we refer to $\SLB{\Phi}$ and $\SUB{\Phi}$ as the \emph{size} functions 
and to $\DLB{\Phi}$ and $\DUB{\Phi}$ as the (absolute) \emph{density} functions.
We note that in~\cite{GGR}, in the constraints imposed by the density functions, each edge was counted twice (corresponding to the two entries in the adjacency matrix). For the sake of simplicity, we have chosen to count each edge once
This does not really have any effect on the analysis.

We rely on Theorem~2.7 from~\cite{FMS}, which generalizes (to hypergraphs) and improves (in terms of the query complexity) the result of~\cite{GGR} on testing general partition properties. Below we state the theorem for the case of graphs, which suffices for our purposes.

%
\begin{theorem}[Special case of~{\cite[Thm. 2.7]{FMS}}] \label{gen-par-test.thm}
	There exists an algorithm $\calA$ such that for any given $k$-part partition description $\Phi$,
	algorithm $\calA$ is a property testing algorithm for the
	property $\calP_\Phi$
	with query complexity
$\poly(k,1/\eps,\log(1/\delta))$,
	that errs with probability at most $\delta$.

\end{theorem}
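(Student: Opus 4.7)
The plan is to apply the double-sampling paradigm of Goldreich--Goldwasser--Ron~\cite{GGR}, in the refined form established in~\cite{FMS}. The tester draws two independent uniform samples $U, W \subseteq V$ of sizes $s_U, s_W = \poly(k, 1/\eps, \log(1/\delta))$, and queries the adjacency of every pair in $U$ and every pair in $U \times W$. It then searches over candidate $k$-partitions $\calU = (U_1,\ldots,U_k)$ of $U$. For each candidate, it assigns every $w \in W$ to a part $i(w) \in [k]$ whose target inter-densities $[\DLB{\Phi}(i(w),j), \DUB{\Phi}(i(w),j)]$ best match the empirical neighborhood counts $|E_G(\{w\},U_j)|/|U_j|$ for all $j \in [k]$, thereby inducing a partition $\calW = (W_1,\ldots,W_k)$ of $W$. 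The tester accepts iff some candidate produces a $\calW$ whose empirical sizes $|W_i|/s_W$ and empirical inter-densities $|E_G(W_i,W_j)|/s_W^2$ satisfy every size and density constraint of $\Phi$ with additive slack $\gamma = \Theta(\eps/k)$.

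For completeness, if $G \in \calP_\Phi$ with witness partition $\calV = (V_1,\ldots,V_k)$, then $\calU = (V_i \cap U)_{i\in[k]}$ is one of the enumerated candidates, and Hoeffding bounds imply that its empirical sizes, together with the empirical sizes and densities of the induced $\calW$, agree with those of $\calV$ up to $\gamma$ with probability at least $1-\delta/2$. For soundness, suppose $G$ is $\eps$-far from $\calP_\Phi$. For each candidate $\calU$ the best-fit rule extends from $W$ to all of $V$, yielding a full $k$-partition $\calV^{\calU}$ of $V$ which, since $G$ is $\eps$-far, must violate some constraint of $\Phi$ by $\Omega(\eps n^2)$. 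This violation is inherited, up to $\gamma$, by the empirical sizes and densities measured on $W$, so a union bound over the search space together with a per-candidate Hoeffding bound gives rejection with probability at least $1-\delta/2$.

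The main obstacle is keeping the query cost polynomial in $k$. The oracle sample $U$ must simultaneously be large enough that every part of relative size at least $\eps/k$ is hit, that the empirical neighborhood counts distinguish the correct part for $w \in W$ up to precision $\Theta(\gamma)$, and that a union bound over all admissible candidates of $\calU$ is absorbed by a polynomially-sized $W$. A naive enumeration of $k^{s_U}$ labeled partitions of $U$ would force $s_W = \Omega(s_U \log k / \eps^2)$ for the union bound, which is still polynomial provided $s_U$ is polynomial. The technical heart of~\cite{FMS} is to discretize the candidate partitions of $U$ by their inter-part density profiles rounded to precision $\Theta(\gamma)$, reducing the effective search space while preserving the completeness/soundness dichotomy. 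Once this is done, the total query count $s_U^2 + s_U s_W$ is $\poly(k, 1/\eps, \log(1/\delta))$, as asserted.
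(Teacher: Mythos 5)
The paper offers no proof of Theorem~\ref{gen-par-test.thm} at all: it is imported verbatim as a black-box special case of~\cite[Thm.~2.7]{FMS} (restricted from hypergraphs to graphs), and the surrounding text only notes that the authors additionally rely on the tester being non-adaptive with queries independent of $\Phi$. So there is no in-paper argument to compare yours against; the honest reading of your proposal is that it, too, is ultimately a citation of~\cite{FMS}, dressed as a sketch of the \cite{GGR}-style double-sampling paradigm, since you explicitly defer ``the technical heart'' --- obtaining $\poly(k)$ rather than $\exp(k)$ dependence by collapsing the candidate space of partitions of $U$ --- to that reference. As a pointer to the right literature and the right paradigm, the sketch is fine and consistent with how the paper treats the statement.

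If the sketch were meant to stand on its own, two steps would need real work. First, the soundness claim that $\eps$-farness forces \emph{every} induced full partition $\calV^{\calU}$ to ``violate some constraint of $\Phi$ by $\Omega(\eps n^2)$'' is the contrapositive of a lemma you have not stated: that a partition whose size and density constraints are violated by a small total amount can be repaired with few edge modifications (note size violations are counted in vertices, and moving vertices between parts perturbs cut densities, so this needs the $\gamma=\Theta(\eps/k)$ bookkeeping made explicit). Second, your completeness rule assigns $w\in W$ by best fit to the \emph{intervals} $[\DLB{\Phi}(i,j),\DUB{\Phi}(i,j)]$; since the true densities of a witness partition sit somewhere inside possibly overlapping intervals, this rule can misclassify vertices even when $G\in\calP_\Phi$. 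The standard fix (in both \cite{GGR} and \cite{FMS}) is to enumerate a discretized guess of the actual density matrix of the witness partition and fit against that guess --- which is exactly the piece you have outsourced. Given that the paper itself outsources the entire theorem, this is not a defect relative to the paper, but it should not be mistaken for a proof.
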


We also use the fact that algorithm $\calA$ in the above theorem is non-adaptive and its queries depend only on $\eps, \delta, k$ and not on $\Phi$.

\begin{definition}
For $\gamma \in [0,1]$, we say that a $k$-part partition description $\Phi$ is
	{\sf $\gamma$-tight} if
$\SUB{\Phi}(i)-\SLB{\Phi}(i) \leq \gamma$ for every $i\in [k]$
	and $\DUB{\Phi}(i,j)-\DLB{\Phi}(i,j)  \leq \gamma$ for every $i,j \in [k]$.
%
\end{definition}

We shall 
make use of the following notations.
For a graph $G$ and a partition $\calV = (V_1,\dots, V_k)$ of $V$, let $\SP{\calV}$ be a $[k]\rightarrow [0,1]$ function such that $\SP{\calV}(i) = \frac{|V_i|}{n}$, and let $\DP{\calV}{G}$ be a $[k]\times[k] \rightarrow [0,1]$ symmetric function such that $\DP{\calV}{G}(i,j)=\frac{|E_G(V_i, V_j)|}{n^2}$. We refer to $\SP{\calV}$ as the \emph{size function} of $\calV$, and to $\DP{\calV}{G}$ as the \emph{density function} of $\calV$ on graph $G$ (when clear from context, the subscript $G$ will be omitted).
For two functions $f_1$ and $f_2$ over the same domain $X$, we let $\Lone{f_1}{f_2} = \sum_{x\in X}|f_1(x) - f_2(x)|$ denote the $L_1$ distance between them. 


For any $\gamma\in [0,1]$ and $k$-part partition description $\Phi$, let $\TPD{\Phi}{\gamma}$ be the set of $\gamma$-tight $k$-part partition descriptions $\Phi'$ such that $\SLB{\Phi'}(i) = \SLB{\Phi}(i)$ and $\SUB{\Phi'}(i) = \SUB{\Phi}(i)$ for every $i\in [k]$, and such that
the range of the density functions $\DLB{\Phi'}$ and $\DUB{\Phi}$  consist of
%
integer multiples of $\gamma$. 

In all that follows, $k$ always denotes an integer (the number of parts in a partition description or partition), and $\gamma$ is always a fraction in $[0,1]$.

We start by making two simple observations that will be used in the proofs of the main lemmas.
\begin{observation}\label{obs:dist_between_densities_same_partition}
	Let $G_1=(V,E_1)$ and $G_2=(V, E_2)$ be two graphs over the same set $V$ of $n$ vertices and let
$\calV^1$ and $\calV^2 $ be two partitions of $V$
such that $\SP{\calV^1} = \SP{\calV^2}$.
	Let $G'_1 = (V, E'_1)$ be a graph that minimizes $|E_1 \SymDif E'_1|$ subject to
	$\DP{\calV^1}{G'_1} = \DP{\calV^2}{G_2}$.
	Then 
$\Delta(G_1,G'_1)= \RPDist{\calV^1}{\calV^2}{G_1}{G_2}$.
\end{observation}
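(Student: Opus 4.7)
The plan is to exploit the fact that the constraint $\DP{\calV^1}{G'_1} = \DP{\calV^2}{G_2}$ decouples across pairs of parts of $\calV^1$. For each $i,j \in [k]$ with $i \le j$, let $P_{ij}$ denote the set of unordered vertex pairs $\{u,v\}$ with $u \in V^1_i$, $v \in V^1_j$ (excluding the diagonal when $i=j$). Then $E_1 \SymDif E'_1 = \bigcup_{i \le j} \bigl((E_1 \cap P_{ij}) \SymDif (E'_1 \cap P_{ij})\bigr)$, the union being disjoint, and the constraint on $G'_1$ only fixes $|E'_1 \cap P_{ij}| = |E_{G_2}(V^2_i, V^2_j)|$ for each $(i,j)$. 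Hence the minimization of $|E_1 \SymDif E'_1|$ splits into independent minimizations, one per pair $(i,j)$.

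First I would verify feasibility: since $\SP{\calV^1} = \SP{\calV^2}$ implies $|V^1_i| = |V^2_i|$ for every $i$, we have $|E_{G_2}(V^2_i, V^2_j)| \le |V^1_i| \cdot |V^1_j| = |P_{ij}|$ (and analogously $\le \binom{|V^1_i|}{2}$ when $i=j$), so the prescribed edge count between $V^1_i$ and $V^1_j$ can indeed be realized in $E'_1$. Next, for any set $A \subseteq P_{ij}$ and any target size $m \in \{0, 1, \dots, |P_{ij}|\}$, the minimum of $|A \SymDif B|$ over $B \subseteq P_{ij}$ with $|B|=m$ is $\bigl||A|-m\bigr|$, attained by deleting $|A|-m$ edges from $A$ (if $|A| \ge m$) or adding $m-|A|$ non-edges from $P_{ij} \setminus A$ (if $|A| < m$).

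Applying this separately in each $P_{ij}$ with $A = E_1 \cap P_{ij}$ and $m = |E_{G_2}(V^2_i, V^2_j)|$ yields
\[
|E_1 \SymDif E'_1| \;=\; \sum_{i \le j} \bigl| |E_{G_1}(V^1_i, V^1_j)| - |E_{G_2}(V^2_i, V^2_j)| \bigr|.
\]
Dividing by $n^2$ and plugging in the definitions $\DP{\calV^1}{G_1}(i,j) = |E_{G_1}(V^1_i,V^1_j)|/n^2$ and $\DP{\calV^2}{G_2}(i,j) = |E_{G_2}(V^2_i,V^2_j)|/n^2$ gives $\Delta(G_1, G'_1) = \sum_{i \le j} |\DP{\calV^1}{G_1}(i,j) - \DP{\calV^2}{G_2}(i,j)|$, which by symmetry of the density functions equals $L_1(\DP{\calV^1}{G_1}, \DP{\calV^2}{G_2}) = \RPDist{\calV^1}{\calV^2}{G_1}{G_2}$ (up to the convention used in defining $L_1$ for symmetric functions over $[k]\times[k]$). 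The argument is really just bookkeeping; the only conceptual point is the separability of the optimization across pairs of parts, with feasibility guaranteed by $\SP{\calV^1} = \SP{\calV^2}$, so there is no real obstacle.
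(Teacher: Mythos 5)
Your proposal is correct and follows essentially the same route as the paper's proof: decompose the symmetric difference across the cuts $(V^1_i,V^1_j)$ (each vertex pair lies in exactly one cut), observe that the per-cut minimum is the absolute difference of the edge counts, and sum. Your explicit feasibility check using $\SP{\calV^1}=\SP{\calV^2}$ is a detail the paper leaves implicit, but it does not change the argument.
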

\begin{proof}
Let $\calV^1 = (V^1_1,\dots,V^1_k)$ and $\calV^2 = (V^2_1,\dots,V^2_k)$.
	For any $i,j\in [k]$, the minimal number of edge modifications that must be performed on $G_1$ so as to obtain
a graph $G_1'$ in which  $|E_{G'_1}(V^1_i, V^1_j)| = |E_{G_2}(V^2_i,V^2_j)|$
is simply $\left||E_{G_1}(V^1_i, V^1_j)| - |E_{G_2}(V^2_i, V^2_j)|\right|$.
	As each vertex pair can only cross one cut, summing over all pairs $i,j \in [k]$ gives the required result.
\end{proof}

\begin{observation} \label{obs:l1_dist_in_tight_intervals}
Let $\Phi_1$ and $\Phi_2$ be two $k$-part $\gamma$-tight partition descriptions, and let $f_1,f_2: [k]\times[k] \to [0,1]$ be two density functions 
such that for $b \in \{1,2\}$ and every $i,j \in [k]$,~
$\DLB{\Phi_b}(i,j) \leq f_b(i,j) \leq \DUB{\Phi_b}(i,j)$.
Then $|\Lone{f_1}{f_2} -\PDDist{\Phi_1}{\Phi_2}| \leq   2\gamma\cdot k^2$.
\end{observation}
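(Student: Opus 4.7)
The plan is to prove Observation~\ref{obs:l1_dist_in_tight_intervals} by a straightforward entry-wise triangle inequality argument, which will then be summed over all $k^2$ pairs of indices.

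First I would unpack the two quantities being compared. By definition, $\PDDist{\Phi_1}{\Phi_2} = \Lone{\DLB{\Phi_1}}{\DLB{\Phi_2}} = \sum_{i,j \in [k]} |\DLB{\Phi_1}(i,j) - \DLB{\Phi_2}(i,j)|$, while $\Lone{f_1}{f_2} = \sum_{i,j \in [k]} |f_1(i,j) - f_2(i,j)|$. So the goal reduces to bounding $\left|\sum_{i,j}|f_1(i,j)-f_2(i,j)| - \sum_{i,j}|\DLB{\Phi_1}(i,j)-\DLB{\Phi_2}(i,j)|\right|$ by $2\gamma k^2$.

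Next, for each fixed pair $(i,j) \in [k]\times[k]$, I would use the $\gamma$-tightness of $\Phi_b$ together with the hypothesis $\DLB{\Phi_b}(i,j) \leq f_b(i,j) \leq \DUB{\Phi_b}(i,j)$ to conclude $|f_b(i,j) - \DLB{\Phi_b}(i,j)| \leq \DUB{\Phi_b}(i,j) - \DLB{\Phi_b}(i,j) \leq \gamma$ for $b \in \{1,2\}$. Then two applications of the triangle inequality (in the form $||a|-|b|| \leq |a-b|$ applied to $a = f_1(i,j)-f_2(i,j)$ and $b = \DLB{\Phi_1}(i,j)-\DLB{\Phi_2}(i,j)$) yield
\[
\bigl||f_1(i,j)-f_2(i,j)| - |\DLB{\Phi_1}(i,j)-\DLB{\Phi_2}(i,j)|\bigr| \leq |f_1(i,j)-\DLB{\Phi_1}(i,j)| + |f_2(i,j)-\DLB{\Phi_2}(i,j)| \leq 2\gamma.
\]

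Finally I would sum this pointwise estimate over all $k^2$ pairs $(i,j)$ and apply the triangle inequality one last time on the outer absolute value to obtain the claimed bound $2\gamma \cdot k^2$. There is no real obstacle here; the statement is essentially a quantitative continuity fact saying that if each density function lies in an interval of width at most $\gamma$, then the $L_1$ distance between the density functions and the $L_1$ distance between (say) the lower endpoints of those intervals differ by at most $2\gamma$ per coordinate.
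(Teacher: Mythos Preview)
Your proof is correct and follows essentially the same approach as the paper: both use $\gamma$-tightness to bound $|f_b - \DLB{\Phi_b}|$ and then apply the triangle inequality. The only cosmetic difference is that the paper phrases the bound at the $L_1$-norm level (observing $\Lone{f_b}{\DLB{\Phi_b}} \leq \gamma k^2$ and then applying the triangle inequality once), whereas you work entrywise and then sum; these are the same argument.
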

\begin{proof}
Since $\DLB{\Phi_b}(i,j) \leq f_b(i,j) \leq \DUB{\Phi_b}(i,j)$ for each $b \in \{1,2\}$ and $i,j \in [k]$,
and both $\Phi_1$ and $\Phi_2$  are $\gamma$-tight, we get that
$\Lone{f_b}{\DLB{\Phi_b}} \leq  \gamma \cdot k^2$ for each $b \in \{1,2\}$.
	The observation now follows from the triangle inequality.
\end{proof}


\begin{lemma}\label{lem:dist_for_phi_works}
	Let $\Phi_1$ and $\Phi_2$ be two $k$-part $\gamma$-tight partition descriptions 
whose size functions are equal and the property $\calP_{\Phi_2}$ is not empty.
	Then for any graph $G_1 \in \calP_{\Phi_1}$, $\Delta(G_1, \calP_{\Phi_2}) \leq \PDDist{\Phi_1}{\Phi_2}+4\gamma \cdot k^2$.
\end{lemma}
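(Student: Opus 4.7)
The plan is to construct a graph $G_1' \in \calP_{\Phi_2}$ close to $G_1$ by combining a small repartitioning of $V$ with Observation~\ref{obs:dist_between_densities_same_partition}. I would start by fixing a partition $\calV^1 = (V_1^1,\dots,V_k^1)$ of $V = V(G_1)$ that witnesses $G_1 \in \calP_{\Phi_1}$, and (using that $\calP_{\Phi_2}$ is nonempty) a graph $G_2 \in \calP_{\Phi_2}$ with a witness partition $\calV^2 = (V_1^2,\dots,V_k^2)$, identifying $V(G_2)$ with $V$. Since the size functions of $\Phi_1$ and $\Phi_2$ coincide and both descriptions are $\gamma$-tight, $\Lone{\SP{\calV^1}}{\SP{\calV^2}} \le \gamma k$, so I can reassign at most $\gamma k n/2$ vertices of $\calV^1$ between parts to obtain a partition $\calV^{1*}$ of $V$ with $\SP{\calV^{1*}} = \SP{\calV^2}$.

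The most delicate step will be bounding how the density function of $G_1$ drifts under this repartitioning. The plan is to observe that moving a single vertex from a part $V_a$ to a part $V_b$ affects $\DP{\cdot}{G_1}$ only in entries indexed by $a$ or $b$, and each incident edge can shift at most four such entries by $1/n^2$. A per-vertex accounting then yields an $L_1$-change of at most $4(n-1)/n^2 \le 4/n$; summing over the $\gamma k n / 2$ reassignments gives $\Lone{\DP{\calV^{1*}}{G_1}}{\DP{\calV^1}{G_1}} \le 2 \gamma k$.

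Next, because $\SP{\calV^{1*}} = \SP{\calV^2}$, I would invoke Observation~\ref{obs:dist_between_densities_same_partition} with the partitions $\calV^{1*}, \calV^2$ and graphs $G_1, G_2$, obtaining $G_1' = (V, E_1')$ with $\DP{\calV^{1*}}{G_1'} = \DP{\calV^2}{G_2}$ and $\Delta(G_1, G_1') = \Lone{\DP{\calV^{1*}}{G_1}}{\DP{\calV^2}{G_2}}$; feasibility of this target density is automatic because the sizes match. I then claim $\calV^{1*}$ witnesses $G_1' \in \calP_{\Phi_2}$: its sizes equal $\SP{\calV^2}$ (so they satisfy $\Phi_2$'s shared size constraints), and its edge densities equal those realized by $\calV^2$ on $G_2$ (so they lie in $[\DLB{\Phi_2}, \DUB{\Phi_2}]$).

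To conclude, I would apply the triangle inequality
\[
\Lone{\DP{\calV^{1*}}{G_1}}{\DP{\calV^2}{G_2}} \le \Lone{\DP{\calV^{1*}}{G_1}}{\DP{\calV^1}{G_1}} + \Lone{\DP{\calV^1}{G_1}}{\DP{\calV^2}{G_2}},
\]
bound the first term by $2\gamma k$ as above, and bound the second by $\PDDist{\Phi_1}{\Phi_2} + 2 \gamma k^2$ via Observation~\ref{obs:l1_dist_in_tight_intervals} applied to the feasible densities $\DP{\calV^b}{G_b} \in [\DLB{\Phi_b}, \DUB{\Phi_b}]$ for $b\in\{1,2\}$. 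Since $\gamma k \le \gamma k^2$ for $k \ge 1$, this yields $\Delta(G_1, \calP_{\Phi_2}) \le \Delta(G_1, G_1') \le \PDDist{\Phi_1}{\Phi_2} + 4\gamma k^2$, as required.
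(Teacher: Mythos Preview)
Your proposal is correct and follows essentially the same approach as the paper: fix witness partitions $\calV^1,\calV^2$, repartition $\calV^1$ to match the part sizes of $\calV^2$, bound the resulting $L_1$-drift in densities, then invoke Observation~\ref{obs:dist_between_densities_same_partition} and Observation~\ref{obs:l1_dist_in_tight_intervals} together with the triangle inequality. Your accounting is in fact slightly tighter (you reassign at most $\gamma k n/2$ vertices rather than $\gamma k n$), but the structure and the final bound are the same.
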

\begin{proof}
	Let $n$ be the number of vertices in $G_1$, and
	let $G_2$ be some graph in $\calP_{\Phi_2}$ with $n$ vertices.
	Let $\calV^1 = (V^1_1, \dots, V^1_k)$ and $\calV^2 =(V^2_1, \dots, V^2_k)$ be partitions of $V$ 
such that $\calV^1$ satisfies $\Phi_1$ with respect to $G_1$, and $\calV^2$ satisfies $\Phi_2$ with respect to $G_2$.

	We next show  that there  exists a graph $\tilde{G}_1$, such that
	 $\Delta(G_1, \tilde{G}_1) \leq \PDDist{\Phi_1}{\Phi_2}+4\gamma \cdot k^2$,
and there exists a partition $\tilde{\calV}^1$ of $V$
such that $\SP{\tilde{\calV}^1} = \SP{\calV^2}$ and $\DP{\tilde{\calV}^1}{\tilde{G}_1} = \DP{\calV^2}{G_2}$. This completes the proof as $\tilde{G}_1 \in \calP_{\Phi_2}$.
	
	
	Let $\tilde{\calV}^1= \{\tilde{V}^1_1,\dots, \tilde{V}^1_k\}$ be a partition of 
$V$ such that $\sum_i |\tilde{V}^1_i\SymDif V^1_i|$ is minimized under the constraint that $|\tilde{V}^1_i| = |V^2_i|$ for every $i\in [k]$.
	As $\max_{i \in [k]} \left||V^1_i| - |V^2_i|\right| \leq \gamma \cdot n$,
the partition $\tilde{\calV}^1$ can be constructed from $\calV^1$ by changing the assignment to parts of less than $\gamma \cdot k \cdot n$ vertices. Since each vertex participates in at most $n$ vertex pairs, 
	\begin{equation} \label{eq:dimn_dist}
		\RPDist{\tilde{\calV}^1}{\calV^1}{G_1}{G_1} \leq \frac{2 \gamma \cdot k \cdot n^2}{n^2} = 2 \gamma \cdot k\;.
	\end{equation}
	As $\calV^1$ satisfies $\Phi_1$ with respect to $G_1$ and $\calV^2$ satisfies $\Phi_2$ with respect to $G_2$, by applying  Observation~\ref{obs:l1_dist_in_tight_intervals} we have that
	\begin{equation}\label{eq:v1_v2}
	\RPDist{\calV^1}{\calV^2}{G_1}{G_2} \leq \PDDist{\Phi_1}{\Phi_2} + 2\gamma \cdot k^2\;.
	\end{equation}
By applying Observation~\ref{obs:dist_between_densities_same_partition}, we have that there exists a graph $\tilde{G}_1$
such that $\tilde{\calV}^1$ satisfies $\Phi_2$ with respect to $\tilde{G}_1$, and
	\begin{equation}\label{eq:G1-tG1}
	\Delta(G_1,\tilde{G}_1) = \RPDist{\tilde{\calV}^1}{\calV^2}{G_1}{G_2}\;.
	\end{equation}
	Combining Equations~\eqref{eq:dimn_dist}--\eqref{eq:G1-tG1} with the triangle inequality, gives us that
$$\Delta(G_1,\tilde{G}_1) \leq \RPDist{\tilde{\calV}^1}{\calV^1}{G_1}{G_1} + \RPDist{\calV^1}{\calV^2}{G_1}{G_2}
  \leq \PDDist{\Phi_1}{\Phi_2} + 4 \gamma \cdot k^2 \;,
$$
as claimed.
\end{proof}


\begin{lemma} \label{lem:exists_partition_description_dist_lower_bound}
For any graph $G$ and $k$-part  $\gamma$-tight  partition description $\Phi$ such that $\calP_\Phi$ is not empty,
	there exists a partition description $\Phi'\in \TPD{\Phi}{\gamma}$  such that $G\in \calP_{\Phi'}$ and $\PDDist{\Phi'}{\Phi} \leq \Delta(G, \calP_\Phi) + 2\gamma \cdot k^2$.
\end{lemma}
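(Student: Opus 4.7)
The plan is to construct the desired $\Phi'$ explicitly, using a closest graph in $\calP_\Phi$ together with a partition that witnesses its membership, and then to invoke Observation~\ref{obs:l1_dist_in_tight_intervals} to bound the $L_1$ distance between the descriptions.

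First I would let $G^* \in \calP_\Phi$ be a graph with $\Delta(G, G^*) = \Delta(G, \calP_\Phi)$, and let $\calV^* = (V^*_1, \ldots, V^*_k)$ be a partition of $V$ satisfying $\Phi$ with respect to $G^*$. The key idea is that the same partition $\calV^*$, viewed with respect to $G$ rather than $G^*$, already gives the size function we want and gives density values that are only slightly perturbed. Concretely, I would define $\Phi'$ by copying the size functions of $\Phi$ (so $\SLB{\Phi'} = \SLB{\Phi}$ and $\SUB{\Phi'} = \SUB{\Phi}$), and by setting, for every $i,j \in [k]$,
\[
\DLB{\Phi'}(i,j) \;=\; \gamma \cdot \left\lfloor \DP{\calV^*}{G}(i,j)/\gamma \right\rfloor, \qquad \DUB{\Phi'}(i,j) \;=\; \DLB{\Phi'}(i,j) + \gamma.
\]
By construction the values are integer multiples of $\gamma$ and $\Phi'$ is $\gamma$-tight, so $\Phi' \in \TPD{\Phi}{\gamma}$.

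Next I would verify that $G \in \calP_{\Phi'}$ by showing that $\calV^*$ satisfies $\Phi'$ with respect to $G$. The size constraints follow immediately because $\calV^*$ satisfies the size constraints of $\Phi$, which are identical to those of $\Phi'$. The density constraints hold by the very definition of $\DLB{\Phi'}$ and $\DUB{\Phi'}$, since $\DLB{\Phi'}(i,j) \leq \DP{\calV^*}{G}(i,j) \leq \DUB{\Phi'}(i,j)$ for every $i,j$.

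Finally I would bound $\PDDist{\Phi'}{\Phi}$ by applying Observation~\ref{obs:l1_dist_in_tight_intervals} with $\Phi_1 = \Phi'$, $\Phi_2 = \Phi$, $f_1 = \DP{\calV^*}{G}$, and $f_2 = \DP{\calV^*}{G^*}$. Both descriptions are $\gamma$-tight; $f_1$ lies in the intervals of $\Phi'$ by construction; and $f_2$ lies in the intervals of $\Phi$ because $\calV^*$ satisfies $\Phi$ with respect to $G^*$. The observation yields
\[
\PDDist{\Phi'}{\Phi} \;\leq\; \Lone{\DP{\calV^*}{G}}{\DP{\calV^*}{G^*}} + 2\gamma \cdot k^2.
\]
The last step is to observe that $\Lone{\DP{\calV^*}{G}}{\DP{\calV^*}{G^*}}$ is just $\Delta(G,G^*)$, since $G$ and $G^*$ share the vertex set and partition, so each edge in the symmetric difference $E_G \SymDif E_{G^*}$ contributes to exactly one term $|E_G(V^*_i,V^*_j) \SymDif E_{G^*}(V^*_i,V^*_j)|/n^2$ (this is the same edge-accounting used in Observation~\ref{obs:dist_between_densities_same_partition}). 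Since $\Delta(G,G^*) = \Delta(G,\calP_\Phi)$, the bound becomes $\PDDist{\Phi'}{\Phi} \leq \Delta(G,\calP_\Phi) + 2\gamma k^2$, as required. I do not expect any serious obstacle here; the only subtle point is to make sure the $L_1$-norm is accounted for consistently with the convention used in the earlier observations.
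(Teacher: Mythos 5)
Your proof is correct and follows essentially the same route as the paper's: take a closest graph $G^*\in\calP_\Phi$ with witness partition $\calV^*$, quantize the densities of $G$ under $\calV^*$ to obtain $\Phi'\in\TPD{\Phi}{\gamma}$, and apply Observation~\ref{obs:l1_dist_in_tight_intervals}. The only (harmless) imprecision is the claim that $\Lone{\DP{\calV^*}{G}}{\DP{\calV^*}{G^*}}$ \emph{equals} $\Delta(G,G^*)$ --- in general it is only $\leq$, since a cut may simultaneously gain and lose edges so the difference of counts can be smaller than the symmetric difference --- but the inequality is all your argument needs.
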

\begin{proof}
	Let $G'= (V,E')$ be a graph in $\calP_\Phi$ that is closest to $G= (V,E)$ and let $\calV = (V_1, \dots, V_k)$ be
a partition of $V$ that satisfies $\Phi$ with respect to $G'$.
Let $\Phi'\in \TPD{\Phi}{\gamma}$ be such that for all $i,j\in [k]$ the density of the cut $(V_i, V_j)$ in $G$ is in the range specified by $\Phi'$ (recall that the size functions of all descriptions in $\TPD{\Phi}{\gamma}$ are equal to those of $\Phi$).
	As $G'$ is a graph closest  to $G$ such that the partition $\calV$ satisfies $\Phi$ with respect to $G'$, by Observation~\ref{obs:dist_between_densities_same_partition} $\Delta(G, G') = \RPDist{\calV}{\calV}{G}{G'}$.
	By Observation~\ref{obs:l1_dist_in_tight_intervals} $\PDDist{\Phi}{\Phi'} \leq \RPDist{\calV}{\calV}{G}{G'} + 2\gamma k^2$.
	The lemma follows by combining the above two equations.
\end{proof}

Before proving Theorem~\ref{thm:general_ggr_EE}, we prove the simpler case of estimating the distance to partition properties defined by tight partition descriptions.
\begin{lemma}\label{lem:tight-gpp-are-ee}
	There exists an algorithm $\calA_{\ref{lem:tight-gpp-are-ee}}$ such that for any $\eps>0$, $\delta>0$,
	$k$-part $(\eps/(5k^2))$-tight  partition description $\Phi$, and graph $G$,
	algorithm $\calA_{\ref{lem:tight-gpp-are-ee}}$ returns  an estimate $\hat{\Delta}$ such that
$\left|\hat{\Delta} - \Delta(G, \calP_\Phi)\right| \leq \eps$
with probability at least $1-\delta$.
The query complexity of $\calA_{\ref{lem:tight-gpp-are-ee}}$ is
	$$\poly(\log(1/\delta), k, 1/\eps)$$
and its queries only depend on $k$, $\eps$ and $\delta$.
\end{lemma}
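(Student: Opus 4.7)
The plan is to reduce distance approximation for $\calP_\Phi$ to running (many copies of) the property-testing algorithm of Theorem~\ref{gen-par-test.thm} against all $\gamma$-tight refinements of $\Phi$, where $\gamma = \eps/(5k^2)$. Specifically, algorithm $\calA_{\ref{lem:tight-gpp-are-ee}}$ enumerates every $\Phi' \in \TPD{\Phi}{\gamma}$, runs the testing algorithm of Theorem~\ref{gen-par-test.thm} for each $\calP_{\Phi'}$ with testing parameter $\eps_0 = \eps/5$ and confidence parameter $\delta_0 = \delta / |\TPD{\Phi}{\gamma}|$, and outputs
\[
\hDelta = \min\{\,\PDDist{\Phi'}{\Phi} : \Phi' \in \TPD{\Phi}{\gamma}, \text{ the tester accepted } \Phi'\,\}\;.
\]
Because the tester of Theorem~\ref{gen-par-test.thm} is non-adaptive and its queries depend only on $k,\eps_0,\delta_0$ (and not on the particular $\Phi'$), a single sample of vertex pairs can be reused across all the $|\TPD{\Phi}{\gamma}|$ tests, so the queries performed by $\calA_{\ref{lem:tight-gpp-are-ee}}$ depend only on $k,\eps,\delta$.

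For correctness I would condition on the event that every one of the $|\TPD{\Phi}{\gamma}|$ invocations of the tester succeeds, which by a union bound happens with probability at least $1-\delta$. Then I establish the two-sided bound on $\hDelta$ as follows. For the upper bound on $\hDelta$, apply Lemma~\ref{lem:exists_partition_description_dist_lower_bound} to obtain some $\Phi^* \in \TPD{\Phi}{\gamma}$ with $G\in\calP_{\Phi^*}$ and $\PDDist{\Phi^*}{\Phi} \leq \Delta(G,\calP_\Phi) + 2\gamma k^2$; since $G \in \calP_{\Phi^*}$ the tester necessarily accepts $\Phi^*$, so $\hDelta \leq \PDDist{\Phi^*}{\Phi} \leq \Delta(G,\calP_\Phi) + 2\gamma k^2$. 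For the lower bound, let $\Phi'$ be the minimizer realizing $\hDelta$; since the tester accepted $\Phi'$ we have $\Delta(G,\calP_{\Phi'}) \leq \eps_0$, so there is some $G''\in\calP_{\Phi'}$ with $\Delta(G,G'') \leq \eps_0$, and Lemma~\ref{lem:dist_for_phi_works} gives $\Delta(G'',\calP_\Phi) \leq \PDDist{\Phi'}{\Phi} + 4\gamma k^2$; combining via the triangle inequality yields $\Delta(G,\calP_\Phi) \leq \hDelta + \eps_0 + 4\gamma k^2$. Plugging in $\gamma k^2 = \eps/5$ and $\eps_0 = \eps/5$, the two-sided gap is at most $5\gamma k^2 = \eps$, as required.

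For the query complexity, observe that $|\TPD{\Phi}{\gamma}| \leq (1/\gamma + 1)^{k(k+1)/2} = (k^2/\eps)^{O(k^2)}$ since each of the $O(k^2)$ entries of $\DLB{\Phi'}$ takes one of $O(1/\gamma)$ values (and $\DUB{\Phi'}$ is determined up to $\gamma$). Hence $\log(1/\delta_0) = \log(1/\delta) + O(k^2 \log(k/\eps))$, which is $\poly(k,1/\eps,\log(1/\delta))$. Plugging into the $\poly(k,1/\eps,\log(1/\delta_0))$ bound of Theorem~\ref{gen-par-test.thm} (and recalling that the same sample is reused for all $\Phi'$), the total number of queries remains $\poly(k,1/\eps,\log(1/\delta))$.

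The only delicate point I anticipate is the bookkeeping of the additive errors in Step 2: one must be slightly careful that the $4\gamma k^2$ term from Lemma~\ref{lem:dist_for_phi_works} combines with the testing slack $\eps_0$ without requiring any hypothesis other than the $\gamma$-tightness of both $\Phi$ and $\Phi'$ and the non-emptiness of $\calP_{\Phi'}$ (which is guaranteed by the tester having accepted $G$ with tolerance $\eps_0$, since otherwise no graph would be accepted). Beyond this the argument is a routine combination of the two preceding lemmas with the testing theorem and a union bound.
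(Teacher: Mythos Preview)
Your proposal is correct and follows essentially the same approach as the paper: enumerate $\TPD{\Phi}{\gamma}$, run the tester of Theorem~\ref{gen-par-test.thm} for each $\Phi'$ with a common (non-adaptive) sample, and output the minimum $\PDDist{\Phi'}{\Phi}$ over the accepted $\Phi'$, then bound the two sides via Lemmas~\ref{lem:dist_for_phi_works} and~\ref{lem:exists_partition_description_dist_lower_bound}. The only cosmetic difference is that the paper uses testing parameter $\eps' = \eps/(5k^2)$ rather than your $\eps_0 = \eps/5$; both choices make the arithmetic close, and your version is in fact marginally tighter.
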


\begin{proof}
	Denote $\eps'=\eps/5k^2$.
	Given input graph $G$, our algorithm tests whether $G \in \calP_{\Phi'}$ with distance parameter $\eps'$ and confidence parameter $\delta \cdot (\eps')^{k^2}$, for each $\Phi' \in \TPD{\Phi}{\eps'}$, using the testing algorithm of Theorem~\ref{gen-par-test.thm}.
	Let $\Sd$ be the set of partition descriptions in $\TPD{\Phi}{\eps'}$ for which the algorithm return ``Accept'' (so that we are ensured with high probability that $G$ is close to or in the corresponding properties).
	Let 
$\hat{\Phi} = \argmin_{\Phi' \in \Sd} \left\{\PDDist{\Phi'}{\Phi}\right\}$
be the property closest to $\Phi$ among the properties $\Phi' \in \Sd$. The algorithm returns $\hat{\Delta} = \PDDist{\hat{\Phi}}{\Phi}$.
	
	Since the testing algorithm is non-adaptive and its queries do not depend on $\Phi'$, we can run 
the algorithm for all $\Phi' \in \TPD{\Phi}{\eps'}$ using the same queries. Thus, by Theorem~\ref{gen-par-test.thm}, we have the desired query complexity.

By its definition $|\TPD{\Phi}{\eps'}| \leq \frac{1}{(\eps')^{k^2}}$  , and thus we run the testing algorithm for at most $\frac{1}{(\eps')^{k^2}}$ partition descriptions.
Using a union bound on the probability of failure in any if these tests, we get that with probability at least $1-\delta$ the testing algorithm succeeds in all of them. Namely, with probability at least $1-\delta$, for every $\Phi' \in \TPD{\Phi}{\eps'}$, if $G\in \Phi'$, then $\Phi' \in \Sd$, and if $\Delta(G,\calP_{\Phi'}) > \eps'$, then $\Phi' \notin \Sd$.
We henceforth condition on this event, and show that this implies that 
$\left|\hat{\Delta} - \Delta(G, \calP_\Phi)\right| \leq \eps$

Since $\Delta(G, \calP_{\hat{\Phi}})\leq \eps'$,
by Lemma~\ref{lem:dist_for_phi_works} and the triangle inequality we have that 
\begin{equation} \label{eq:lower}
	\Delta(G, \calP_\Phi) \leq \hat{\Delta} + 4\eps' \cdot k^2 + \eps' \leq \hat{\Delta} + \eps\;.
	\end{equation}
	Let $\Phi^*$ be the partition description implied by  Lemma~\ref{lem:exists_partition_description_dist_lower_bound} when applied with 
$\Phi$, $G$, and $\gamma=\eps'$.
	Then $\Phi^* \in \TPD{\Phi}{\eps'}$ and $G \in \calP_{\Phi^*}$ so that $\Phi^* \in \Sd$.
	By the definition of $\hat{\Delta}$, 
$\hat{\Delta} \leq \PDDist{\Phi^*}{\Phi}$.
By Lemma~\ref{lem:exists_partition_description_dist_lower_bound},
$\PDDist{\Phi^*}{\Phi} \leq \Delta(G, \calP_\Phi) + 2\eps' \cdot k^2 \leq \Delta(G, \calP_\Phi)+\eps$.
Hence,
	\begin{equation} \label{eq:upper}
	\hat{\Delta} \leq \Delta(G, \calP_\Phi) + \eps\;.
	\end{equation}
	By Equations~\eqref{eq:lower} and~\eqref{eq:upper} the algorithm outputs an estimate $\hat{\Delta}$ as required. 
%
%
\end{proof}

Next we observe any $\calGPP$ is covered by $\gamma$-tight $\calGPP$, for any $\gamma>0$, which allows us to use the cover theorem (Theorem~\ref{thm:cover}).
\begin{observation}
	Let $P_\Phi$ be $k$-part $\calGPP$. Then there exists a $0$-cover of $P_\Phi$ of $\gamma$-tight $k$-part partition properties, of size at most $\frac{1}{\gamma}^{k^2+k}$.
\end{observation}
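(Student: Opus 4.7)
My plan is to explicitly construct the cover by refining each interval appearing in $\Phi$ into a grid of width $\gamma$, and to argue that the resulting family is both a partition (hence a cover) of $\calP_\Phi$ and of the right size. First, I would partition each size interval $[\SLB{\Phi}(i), \SUB{\Phi}(i)]$ for $i\in[k]$ into at most $\lceil 1/\gamma\rceil$ consecutive closed subintervals of length at most $\gamma$; similarly I would partition each density interval $[\DLB{\Phi}(i,j), \DUB{\Phi}(i,j)]$ for $i \leq j \in [k]$ into at most $\lceil 1/\gamma\rceil$ consecutive closed subintervals of length at most $\gamma$. A choice of one subinterval per size constraint and per (unordered) density constraint defines a new $k$-part partition description $\Phi'$, and by construction this $\Phi'$ is $\gamma$-tight. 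Let $\calF$ be the set of all such $\Phi'$ obtained by all combinations of choices.

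The key structural claim is that $\calP_\Phi = \bigcup_{\Phi'\in\calF} \calP_{\Phi'}$, which immediately gives both conditions of a $0$-cover. The inclusion $\bigcup_{\Phi'\in\calF}\calP_{\Phi'} \subseteq \calP_\Phi$ is straightforward: any $\Phi'\in\calF$ has all of its size and density intervals contained inside the corresponding intervals of $\Phi$, so a partition of $G$ witnessing $\calP_{\Phi'}$ also witnesses $\calP_\Phi$. For the reverse inclusion, take any $G\in\calP_\Phi$ with witness partition $\calV=(V_1,\dots,V_k)$. For each $i$, the value $|V_i|/n$ falls into some subinterval of the partition of $[\SLB{\Phi}(i), \SUB{\Phi}(i)]$, and similarly for each $(i,j)$ the value $|E_G(V_i,V_j)|/n^2$ falls into some subinterval of the partition of $[\DLB{\Phi}(i,j), \DUB{\Phi}(i,j)]$. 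Assembling these chosen subintervals defines a member $\Phi'\in\calF$ for which $\calV$ witnesses $G\in\calP_{\Phi'}$.

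For the size bound, there are $k$ size intervals and $\binom{k}{2}+k = \binom{k+1}{2} \leq k^2$ (unordered) density intervals, each refined into at most $\lceil 1/\gamma\rceil \leq 1/\gamma$ subintervals (assuming $1/\gamma$ is treated as the natural upper bound, as is standard in this setting), so $|\calF| \leq (1/\gamma)^{k^2+k}$. The only point worth double-checking is the symmetry of density functions, which is why we count unordered pairs $(i,j)$ and obtain an exponent of $k^2+k$ rather than $2k^2$. No step here is expected to be a real obstacle; the argument is essentially a bookkeeping exercise, and the main thing to be careful about is just to align the refinement with the symmetry of $\DLB{\Phi},\DUB{\Phi}$ so that each $\Phi'\in\calF$ is itself a well-defined (symmetric) partition description.
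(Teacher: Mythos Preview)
Your proposal is correct and is exactly the natural argument; the paper in fact states this observation without proof, treating the grid-refinement construction you describe as self-evident. The only minor point to tidy up is the inequality $\lceil 1/\gamma\rceil \leq 1/\gamma$, which is handled in the usual way by assuming $1/\gamma$ is an integer (or by noting the stated bound is meant up to such rounding).
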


The proof of Theorem~\ref{thm:general_ggr_EE} follows from the above observation and Lemma~\ref{lem:tight-gpp-are-ee}.

\subsection{Relative density constraints}
Recall that general partition properties in $\GPP$ 
impose density constraints on the fraction of edges in a cut (between two parts in a partition), relative to $n^2$.
Nakar and Ron~\cite{NR18} study a generalization of $\GPP$, denoted $\GPPWR$, which allows in addition to  impose constraints on the fraction of edges in a cut \emph{relative to the number of vertex pairs in the cut}.
Note that semi-homogeneous partition properties (the family $\tcalHPP$) are examples of such properties (observe that $\tcalHPP \not \subset \GPP$). 

More formally, recall that properties in $\GPP$ are defined by four functions ($\SLB{\Phi}, \SUB{\Phi}, \DLB{\Phi}, \DUB{\Phi}$ in Definition~\ref{def:gpp}) and impose two constraint sets that valid graph partitions must comply with: Equation~\eqref{eq:gpp-prop1} and~\eqref{eq:gpp-prop2} in Definition~\ref{def:gpp}).
Properties in $\GPPWR$ are defined by two additional $[k]\times [k] \rightarrow [0,1]$ functions, denoted $RD^{lb}_\Phi$ and $RD^{up}_\Phi$, such that $\forall i,j \in [k]$, $RD^{lb}_\Phi(i,j)\leq RD^{ub}_\Phi(i,j)$, and they impose an additional set of constraints:
\begin{equation*}
	\forall i,j\in [k] \;\;\;\; RD^{lb}_\Phi(i,j)\cdot |V_i| \cdot |V_j| \leq  |E_G(V_i, V_j)| \leq RD^{ub}_\Phi(i,j)\cdot |V_i| \cdot |V_j|\;.
\end{equation*}

\noindent In the proof of Theorem~1 in~\cite{NR18} 
it is shown that each $\calP \in \GPPWR$ has an $\eps$-cover of size $\frac{1}{\eps^{O(k^2\cdot \log(k))}}$ by a family of functions that belong to $\calGPP$.
Applying this observation with Theorem~\ref{thm:cover} and Theorem~\ref{thm:general_ggr_EE}, we deduce that properties in $\GPPWR$ also have distance-approximation algorithms with query complexity of $\poly(k, 1/\eps)$.




\ifnum\fullver=0
\section{Missing proofs for Section~\ref{sec:p4}}\label{app:p4}

\subsection{Proof of Lemma~\ref{lem:p4-to-tree}}
\else
\section{Proof of Lemma~\ref{lem:p4-to-tree}}\label{app:p4}
\fi

The proof of Lemma~\ref{lem:p4-to-tree} is based on the next theorem from~\cite{Sei74}, where
for a graph $G = (V,E)$, we let $\overline{G} = (V,\overline{E})$ denote the graph complementary to $G$ (that is, for every pair of vertices $u,v$, we have that $(u,v) \in E$  if and only if $(u,v) \notin \overline{E}$).

	\begin{theorem}[\cite{Sei74}] \label{thm:p4_connectivity}
		Let $G$ be a graph with vertex set $V$. The following statements are equivalent:
		\begin{enumerate}
			\item $G$ has no induced subgraph isomorphic to $P_4$.\label{it:free}
			\item \label{it:sub-con}
For every subset $U \subseteq V$, either $G[U]$ is connected or $\overline{G}[U]$ is connected.
		\end{enumerate}
	\end{theorem}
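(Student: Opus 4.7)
The plan is to prove both directions of the equivalence, noting that $P_4$ is self-complementary (its complement, up to relabeling, is again $P_4$); this is the key structural coincidence that drives both directions.

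For the direction $(2) \Rightarrow (1)$, I would argue by contraposition. Suppose $G$ has an induced $P_4$ on $U = \{a,b,c,d\}$ with edges $ab, bc, cd$. Then $G[U]$ is clearly connected, and computing the non-edges among $\{a,b,c,d\}$ gives precisely $\{bd, da, ac\}$, which forms the path $b \text{-} d \text{-} a \text{-} c$ in $\overline{G}[U]$. Hence $\overline{G}[U]$ is also connected, so neither induced subgraph is disconnected, contradicting (the intended reading of) condition~\ref{it:sub-con}.

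For the harder direction $(1) \Rightarrow (2)$, I would again proceed contrapositively: assume some $U \subseteq V$ with both $G[U]$ and $\overline{G}[U]$ connected, and extract an induced $P_4$ in $G[U]$. Take such a $U$ of minimum cardinality; a direct case check shows $|U| \geq 4$. The first step is to pick any non-edge $\{x,y\}$ of $G$ with $x,y \in U$ (one exists since $\overline{G}[U]$ has at least one edge), and then take a shortest path $x = x_0, x_1, \ldots, x_k = y$ in $G[U]$; by hypothesis $k \geq 2$. If $k \geq 3$, the four vertices $x_0, x_1, x_2, x_3$ induce a $P_4$ (non-consecutive vertices must be non-adjacent by the shortest-path property), and we are done.

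The remaining, harder case is $k = 2$: every non-edge of $G[U]$ has a common $G$-neighbor, i.e., $G[U]$ has diameter $2$. By symmetry, $\overline{G}[U]$ also has diameter at most $2$. Starting from a fixed $P_3$-triple $x \text{-} z \text{-} y$ of $G[U]$ with $xy \notin E(G)$, I would look for a fourth vertex $w$ that is $\overline{G}$-adjacent to $z$ (exists since $\overline{G}[U]$ is connected and has diameter $\leq 2$) and analyse its adjacency to $x$ and $y$; depending on the pattern, either $(w,y,z,x)$ or a symmetric relabelling directly forms an induced $P_4$, or one iterates the diameter-2 argument once more. The main obstacle I anticipate is organising this final case analysis cleanly: one must use the minimality of $U$ (removing any vertex makes one of $G[U'], \overline{G}[U']$ disconnect) to rule out the residual configurations in which both diameters equal $2$ yet no induced $P_4$ appears. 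Minimality forces that every vertex is either a $G[U]$-cutvertex or a $\overline{G}[U]$-cutvertex, and combining this with the diameter-$2$ conditions in both $G[U]$ and $\overline{G}[U]$ should pin down the structure tightly enough to always exhibit an induced $P_4$.
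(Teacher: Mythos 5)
The paper offers no proof of this theorem to compare against: it is quoted from~\cite{Sei74} and used as a black box in the proof of Lemma~\ref{lem:p4-to-tree}, so you are supplying an argument the authors deliberately outsourced. Two preliminary points in your write-up are right and worth keeping: Item~(2) must indeed be read as ``for every $U$ with $|U|\geq 2$, at least one of $G[U]$, $\overline{G}[U]$ is \emph{dis}connected'' (as literally written, with an inclusive ``or'', the condition holds for every graph, since a disconnected graph always has a connected complement, so the stated equivalence would be vacuous); and your direction $(2)\Rightarrow(1)$ is complete and correct -- an induced $P_4$ and its complementary $P_4$ on the same four vertices are both connected. The reduction in $(1)\Rightarrow(2)$ via shortest paths (distance $\geq 3$ between a non-adjacent pair yields an induced $P_4$, and dually in $\overline{G}[U]$ since $P_4$ is self-complementary) is also sound.

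The genuine gap is the terminal case, where both $G[U]$ and $\overline{G}[U]$ are connected of diameter $2$: this is where the entire content of Seinsche's theorem lives, and what you give there is a plan, not a proof. Concretely, from the triple $x$--$z$--$y$ with $xy\notin E$ and a vertex $w$ with $zw\notin E$, the subcases $wx,wy\in E$ (which produces an induced $C_4$, not a $P_4$) and $wx,wy\notin E$ are exactly the ones left open, and ``minimality plus diameter $2$ should pin down the structure'' is a hope rather than an argument -- iterating the diameter-$2$ step does not visibly terminate. The standard way to close this case in fact bypasses the diameter reduction entirely and uses minimality more sharply: pick any $v\in U$ and set $U'=U\setminus\{v\}$; by minimality (and by passing to the complement if necessary, which is harmless since the hypothesis and $P_4$-containment are self-complementary) $G[U']$ is disconnected. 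Connectivity of $G[U]$ gives $v$ a neighbor in every component of $G[U']$, while connectivity of $\overline{G}[U]$ gives $v$ a non-neighbor $u$ in some component $C$; walking in $C$ from a neighbor of $v$ to $u$ and taking the first edge $wu'$ with $vw\in E$, $vu'\notin E$, together with a neighbor $z$ of $v$ in a different component, the vertices $u',w,v,z$ induce a $P_4$. Without some argument of this kind, your proof of the hard direction is incomplete.
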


\begin{proof}[Proof of Lemma~\ref{lem:p4-to-tree}]
Assume first that Item~\ref{it:sub-con} of Theorem~\ref{thm:p4_connectivity} holds.
We show that Item~\ref{it:T-G} of Lemma~\ref{lem:p4-to-tree} holds as well.
Consider the following recursive procedure that defines a tree $T(G)$ as stated in Item~\ref{it:T-G} of Lemma~\ref{lem:p4-to-tree}.
Starting with $U=V$, at each recursive call we have a subset of vertices $U\subseteq V$. If $|U|=1$, then we have reached a leaf of $T(G)$, and the single vertex in $U$ corresponds to this leaf.
Otherwise ($|U|>1$),
we partition $U$ into two disjoint (and non-empty) subsets $U_1$ and $U_2$, such that $(U_1,U_2)$ is either an empty or complete cut in $G$, thus defining an internal node in $T(G)$.
This is always possible, since by  Item~\ref{it:sub-con} of Lemma~\ref{lem:p4-to-tree}, one of the following holds: (1) $G[U]$ is not connected (in which case we can partition the connected components of $G$ into two subsets and obtain an empty cut); (2)  $\overline{G}[U]$ is not connected (in which case we can partition the connected components of $\overline{G}$ into two subsets and obtain a complete cut in $G$).
We then continue recursively with $U_1$ and $U_2$.

Assume next that Item~\ref{it:T-G} of Theorem~\ref{thm:p4_connectivity} holds. We show that
Item~\ref{it:sub-con} of Theorem~\ref{thm:p4_connectivity} holds as well. Let $U$ be any subset of $V$.
Consider the leaves of $T(G)$ that correspond to the vertices in $U$ and let $y$ be the lowest common ancestor of all these leaves.
If the cut $(W_1,W_2)$ corresponding to $y$ is empty, then $G[U]$ is not connected (as both $W_1\cap U \neq \emptyset$ and $W_2\cap U\neq \emptyset$). Similarly, if $(W_1,W_2)$ is complete, then $\overline{G}[U]$ is not connected.
\end{proof}

\ifnum\fullver=0
\subsection{Proof of Lemma~\ref{lem:close-tree}}

\ProofOfLemCloseTree
\fi

\section{Proof of Lemma~\ref{lem:everything_for_assaf_and_lior}}\label{app:c4}

In order to prove Lemma~\ref{lem:everything_for_assaf_and_lior}, we make use of the following
lemma~\cite[Lem. 3.7]{GS19},
which is slightly rephrased for consistency with our notations). For two sets $S,T$, we use $S\Delta T$ for the symmetric difference between the sets.

\begin{lemma}[\cite{GS19}]\label{lem:cond_regularity}
	There is an absolute constant $c > 0$
	such that for every
	$\alpha,\gamma \in (0,1)$, every $n$-vertex graph $G=(V,E)$ either contains $\Omega(\alpha^c\gamma^c n^4)$ induced copies of $C_4$ or there is a graph $G^{(1)}= (V,E')$, a partition
	$(X_1,\dots,X_k, Y)$ of $V$, where $k \leq 10\alpha^{-3}$, a subset
	$Z \subseteq X := X_1 \cup \dots \cup X_k$,
	a partition $(Q_1,\dots,Q_t)$ of $Q=X \setminus Z$ that refines the partition
        $(X_1\setminus Z,\dots,X_k \setminus Z)$
and subsets $W_i \subseteq Q_i$ for $i\in [t]$, for which the following holds.
	\begin{enumerate}
		\item\label{it:Xi-Z-Y} $G^{(1)}[X_i \setminus Z]$ is a clique for every $i\in [k]$, and $G^{(1)}[Y]$ is an
empty graph.
		\item\label{it:Z-small} $|Z| < \alpha n$ and every $z \in Z$ is an isolated vertex in $G^{(1)}$.
		\item\label{it:sum-Qi-Qj} In $G^{(1)}$, the sum of $|Q_i||Q_j|$, taken over all non-homogeneous pairs $(Q_i,Q_j)$,
		$1 \leq i < j \leq t$, is at most $\alpha n^2$.
		\item\label{it:Wi-Wj} $(W_i,W_j)$ is homogeneous in $G^{(1)}$ for every $1 \leq i < j \leq t$, and
		$|W_i| \geq (\alpha/20)^{4000\alpha^{-6}} |X|$ for every $i\in [t]$.
		\item\label{it:G-G'-close} 
		$\left| E(G^{(1)}) \SymDif E(G) \right|
            < (2\alpha + \gamma)n^2$
            and
		$\left| E(G^{(1)}[X \setminus Z]) \SymDif E(G[X \setminus Z]) \right| < \gamma n^2$.
		\item $\forall i$ $X_i$ is a clique in $G^{(1)}$.
	\end{enumerate}
\end{lemma}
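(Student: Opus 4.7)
The plan is to prove this structural dichotomy by an iterative decomposition argument in the spirit of the Gishboliner--Shapira induced regularity framework. The starting observation is that a $C_4$-free graph has highly constrained local structure: if two non-adjacent vertices $u,v$ share $m$ common neighbors $N$, then $N$ must induce at least $\binom{m}{2}$ edges, since any non-edge $(x,y)$ in $N$ yields an induced $C_4$ on $u,x,v,y$; dually for common non-neighbors of adjacent pairs. This local rigidity is the engine that lets us either detect $\Omega(\alpha^c\gamma^c n^4)$ induced copies of $C_4$ or extract a clean global decomposition.

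First I would construct the coarse partition $(X_1,\dots,X_k,Y)$ by a greedy scheme. Repeatedly pick a vertex $v$ of maximum remaining degree; if $\deg(v) \ge \alpha n$, then by the rigidity observation its neighborhood is $\alpha^{O(1)}$-close to a clique, since otherwise we already produce $\Omega(\alpha^c n^4)$ induced $C_4$'s. Declare $N(v)$ to be the next $X_i$ (after completing it to a clique, charged against the $\gamma n^2$ portion of the edit budget), move to $V\setminus N[v]$, and iterate. After at most $k \le 10\alpha^{-3}$ rounds the remaining vertices have degree below $\alpha n$ and are placed into $Y$, which we make independent by deleting those edges (charged against the $2\alpha n^2$ portion). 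Vertices that straddle categories or fail a local clique/independence test get shunted into $Z$ and isolated; a volume count enforces $|Z| < \alpha n$.

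Second I would refine each $X_i\setminus Z$ into homogeneous pieces $Q_1,\dots,Q_t$. Assign each vertex $u \in X_i$ a ``type vector'' recording its edge-behavior toward the other $X_j$'s; two vertices in the same $X_i$ with sufficiently different types force induced $C_4$'s, because $X_i$ is already a clique and a cross-witness into some $X_j$ supplies the remaining needed edges. Clustering by type produces the partition $(Q_1,\dots,Q_t)$ refining $(X_1\setminus Z,\dots,X_k\setminus Z)$. The residually non-homogeneous pairs $(Q_i,Q_j)$ together contribute at most $\alpha n^2$ in $|Q_i||Q_j|$: any larger contribution would supply many $C_4$'s via the non-homogeneous-cliques witness (four vertices $u_1,u_2\in Q_i$ and $w_1,w_2\in Q_j$ with precisely $u_1 w_1$ and $u_2 w_2$ as cross-edges, yielding the induced 4-cycle $u_1 w_1 u_2 w_2$). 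Finally, to produce $W_i \subseteq Q_i$ with strict pairwise homogeneity, I would iteratively shrink: starting from $W_i^{(0)} = Q_i$, at each round pick a pair $(i,j)$ still non-homogeneous in $G^{(1)}$ and replace both sets by large subsets on which each side is majority-adjacent or majority-non-adjacent to the other. Each round multiplies sizes by a factor $(\alpha/20)^{O(1)}$, and $O(t) = O(\alpha^{-6})$ rounds suffice, yielding $|W_i| \ge (\alpha/20)^{4000\alpha^{-6}}|X|$.

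The main obstacle will be the quantitative bookkeeping in this final homogenization step: the tower-type exponent on $\alpha$ reflects the cost of upgrading ``bulk homogeneity'' (controlled only by the aggregate $\sum |Q_i||Q_j|$ over bad pairs) to the strict pairwise homogeneity demanded on the $W_i$'s, all while preserving the size lower bound. Balancing this against $|Z| < \alpha n$ and the two separate edit budgets $(2\alpha+\gamma)n^2$ and $\gamma n^2$ (the latter restricted to $X\setminus Z$) will require an accounting that assigns disjoint error allowances to the coarse partition, the type-based refinement, and the homogenization rounds, and that routes every edit made outside $X\setminus Z$ into the looser $2\alpha n^2$ term.
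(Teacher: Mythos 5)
First, a point of orientation: the paper does not prove this lemma at all --- it is quoted (slightly rephrased) from~\cite[Lem.~3.7]{GS19}, so there is no in-paper argument to compare your proposal against. Judged on its own terms, your sketch captures some of the genuine ingredients of the Gishboliner--Shapira machinery (the common-neighborhood rigidity observation, a clique-plus-independent-set coarse decomposition, and an iterated Ramsey-type shrinking to obtain the pairwise-homogeneous sets $W_i$, whose exponential cost correctly explains the $(\alpha/20)^{4000\alpha^{-6}}$ bound), but its foundation is flawed.

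The concrete error is in your first construction step. You claim that if $\deg(v)\ge \alpha n$ then $N(v)$ is $\alpha^{O(1)}$-close to a clique, ``since otherwise we already produce $\Omega(\alpha^c n^4)$ induced $C_4$'s.'' This does not follow from the rigidity observation you state, and it is false: the star $K_{1,n-1}$ is induced-$C_4$-free, yet the neighborhood of its center is an independent set. Your rigidity lemma concerns the \emph{common} neighborhood of a \emph{non-adjacent pair} $u,v$ --- a non-edge $(x,y)$ there closes an induced $C_4$ on $\{u,x,v,y\}$ precisely because both $u$ and $v$ supply the two missing cycle edges; a single vertex $v$ cannot play both roles. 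Consequently the greedy extraction of the $X_i$'s breaks, and with it the accounting: ``completing $N(v)$ to a clique'' could cost $\Theta(n^2)$ edits in the star example, which cannot be charged to the $\gamma n^2$ budget reserved for edits inside $X\setminus Z$. The actual construction of the near-clique cover in~\cite{GS19} aggregates the pair-based rigidity over many non-adjacent pairs (and symmetrically over adjacent pairs and common non-neighborhoods), which is a genuinely different and more delicate counting argument. Secondary gaps remain even granting the coarse partition: your type-vector refinement may produce parts $Q_i$ too small to contain a $W_i$ of size $\ge(\alpha/20)^{4000\alpha^{-6}}|X|$ (the bound is relative to $|X|$, not to $|Q_i|$), and the role of the second parameter $\gamma$ --- which governs both the $C_4$-count threshold $\Omega(\alpha^c\gamma^c n^4)$ and the tighter edit budget inside $X\setminus Z$ --- is never actually threaded through the argument.
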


The next lemma is proved very similarly to
Lemma~4.1 in~\cite{GS19}.
It is stated a bit more generally than necessary for our use in the context of $C_4$-freeness, since it also serves us in the proof for chordality. For a family of graphs $\calJ$, we say that a graph $G$ is induced $\calJ$-free, if it is induced $J$-free for every $J \in \calJ$.
\begin{lemma} \label{lem:cut_is_close_to_IS_vertices_cliques}
	Let $\calJ$ be a (finite or infinite) family of graphs such that
	\begin{enumerate}
		\item $C_4 \in \calJ$.
		\item For every $J\in \calJ$ and $v\in V(J)$, the neighborhood of $v$ in $J$ is not a clique.	
	\end{enumerate}
For $\beta >0$
	let $H=(V,E)$ be a graph that is $\beta$-close to being induced $\calJ$-free and such that $V$ can be partitioned into two sets $Q$ and $I$ where $H[Q]$ is induced $\calJ$-free
	and $H[I]$ is empty.
	Then, there exists a graph $H' = (V,E')$ for which the following holds.
	\begin{enumerate}
		\item\label{it:H'-H} $H'$ is $2\beta^{1/4}$-close to $H$.
		\item\label{it:H'-C4-free} $H'$ is induced $\calJ$-free.
		\item\label{it:y-N-CLique} 
		$H'$ differs from $H$ only in the cut $(Q,I)$
		and for every $y\in I$ , $H'[N_{H'}(y)] $ is a clique.
	\end{enumerate}
\end{lemma}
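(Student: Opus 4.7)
\textbf{The plan} is to construct $H'$ by modifying only edges in the cut $(Q, I)$: for each $y \in I$, we select a clique $C_y \subseteq Q$ in $H[Q]$ and define $H'$ by $H'[Q] = H[Q]$, $H'[I] = H[I]$, and $N_{H'}(y) = C_y$ for every $y \in I$. Item~\ref{it:y-N-CLique} and the ``edges only differ on $(Q, I)$'' part of Item~\ref{it:H'-H} are immediate. For Item~\ref{it:H'-C4-free}: if $H'$ contained an induced copy $J'$ of some $J \in \calJ$, then $J'$ could not lie entirely in $Q$ (since $H'[Q] = H[Q]$ is induced $\calJ$-free by hypothesis) and could not lie in $I$ (since $H'[I]$ is empty, whereas the second hypothesis forces every vertex of $J$ to have degree $\geq 2$, so $J$ has at least three edges). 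Hence $J'$ contains some $y \in I$; its neighbors in $J'$ lie in $N_{H'}(y) = C_y$ and therefore form a clique, contradicting the hypothesis that the vertex of $J$ corresponding to $y$ has a non-clique neighborhood.

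\textbf{For the distance bound}, let $H^* = (V, E^*)$ be an induced $\calJ$-free graph with $|E \SymDif E^*| \leq \beta n^2$, which exists by hypothesis. For each $y \in I$, let $C_y$ be a clique in $H[Q]$ minimizing $|N_H(y) \SymDif C_y|$. Since $H$ and $H'$ agree outside the cut, $\Delta(H, H') = n^{-2} \sum_{y \in I} |N_H(y) \SymDif C_y|$. Call $y \in I$ \emph{bad} if $|N_H(y) \SymDif N_{H^*}(y)| > \beta^{1/2} n$ and \emph{good} otherwise. Double-counting gives $\sum_{v \in V} |N_H(v) \SymDif N_{H^*}(v)| = 2|E \SymDif E^*| \leq 2\beta n^2$, so at most $2 \beta^{1/2} n$ vertices in $I$ are bad. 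Each bad $y$ contributes at most $|N_H(y)| \leq n$ (bounding the optimum by $C_y = \emptyset$), for a total contribution of at most $2\beta^{1/2} n^2$. For good $y$, I would exhibit a candidate clique $\tilde C_y \subseteq Q$ in $H[Q]$ with $|N_H(y) \SymDif \tilde C_y| = O(\beta^{1/4} n)$ on average, which upper-bounds $|N_H(y) \SymDif C_y|$ by optimality of $C_y$. The construction samples a random $x \in N_{H^*}(y) \cap Q$ and extracts a clique near $(N_{H^*}(x) \cap N_{H^*}(y)) \cup \{x\}$ in $H^*[Q]$: induced $C_4$-freeness of $H^*$ (since $C_4 \in \calJ$) ensures that, for a typical $x$, this set is close to a clique in $H^*$, and then the at most $\beta n^2$ edge differences between $H[Q]$ and $H^*[Q]$ translate into $O(\beta^{1/2} n)$ additional modifications per $y$ on average. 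Summing, $\Delta(H, H') \leq 2 \beta^{1/2} + O(\beta^{1/4}) \leq 2 \beta^{1/4}$, the lower-order term being absorbed.

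\textbf{The main obstacle} is the detailed combinatorial analysis for good $y$: establishing that the random-sampling / clique-extraction procedure from $N_{H^*}(y) \cap Q$ produces, on average, a clique $\tilde C_y$ close to $N_H(y)$ with expected symmetric difference $O(\beta^{1/4} n)$. The induced $C_4$-freeness of $H^*$ supplies the key local structural constraint — if $u, v \in N_{H^*}(y)$ with $(u,v) \notin E^*$ then every common $H^*$-neighbor of $u, v$ must lie in $N_{H^*}(y) \cup \{y\}$ — which controls how far $N_{H^*}(y)$ is from being a clique. Converting this structural property into the quantitative $\beta^{1/4}$ bound requires a double-averaging: one expectation over $y \in I$ and one over $x \in N_{H^*}(y) \cap Q$, coupled with a Markov-type tail bound that accounts for both the cut-differences $|N_H(y) \SymDif N_{H^*}(y)|$ and the interior-differences $|E(H[Q]) \SymDif E(H^*[Q])|$ simultaneously.
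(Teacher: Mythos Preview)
Your construction of $H'$ and your arguments for Items~\ref{it:H'-C4-free} and~\ref{it:y-N-CLique} are correct, and in fact slightly cleaner than the paper's: you take $C_y$ to be the \emph{optimal} clique in $H[Q]$, so your $H'$ is at least as close to $H$ as any other candidate. The paper instead makes a specific choice --- it picks a maximal anti-matching $M(y)$ among non-edges in $H[N_Q(y)]$ and sets $N_{H'}(y)=N_H(y)\setminus V(M(y))$, which is a clique by maximality of $M(y)$ --- but this is just one admissible $C_y$, so your $\Delta(H,H')$ is bounded above by the paper's.

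The gap is entirely in Item~\ref{it:H'-H}, where your route through the witness $H^*$ does not close. The structural fact you isolate --- that if $u,v\in N_{H^*}(y)$ are non-adjacent then $N_{H^*}(u)\cap N_{H^*}(v)\subseteq N_{H^*}(y)\cup\{y\}$ --- is correct, but it does \emph{not} imply that $N_{H^*}(y)$ is close to a clique. Indeed, $C_4$-freeness places no direct bound on the number of non-edges inside a single neighborhood; a vertex $y$ in a $C_4$-free graph can have $\Theta(|N(y)|^2)$ non-edges among its neighbors. Your random-sample set $A=(N_{H^*}(x)\cap N_{H^*}(y))\cup\{x\}$ likewise need not be near a clique: for $u,v\in A$ non-adjacent, the quadruple $\{x,u,y,v\}$ spans five edges and one non-edge, which is not an induced $C_4$, so nothing is forbidden. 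So the ``clique-extraction'' step has no engine behind it, and the averaging you describe cannot get started.

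The paper avoids $H^*$ altogether and works directly in $H$. The key point is that $H[Q]$ itself is induced $C_4$-free (since $C_4\in\calJ$), so for any two disjoint non-edges in $M(y)$ their four endpoints must span at least one further non-edge; hence $H[N_Q(y)]$ has at least $|M(y)|^2/2$ non-edges. Writing $d_2(y)$ for that count and $d(u,v)$ for the number of $y\in I$ adjacent to both $u$ and $v$ (with $(u,v)\notin E$), one gets via two applications of Jensen that if $\sum_y 2|M(y)|>2\beta^{1/4}n^2$ then $H$ contains more than $\beta n^4$ induced $C_4$'s, contradicting $\beta$-closeness to $\calJ$-freeness. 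This is the missing ingredient: the anti-matching both constructs the clique \emph{and} supplies the quantitative lower bound on non-edges that drives the $C_4$ count. Your optimal-$C_y$ construction is compatible with this argument --- just bound $|N_H(y)\SymDif C_y|\le 2|M(y)|$ and run the paper's count --- but the detour through $H^*$ does not lead there.
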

\begin{proof}
Let $\beta' = 2\beta^{1/4}$.
Consider selecting, for each $y \in I$, a maximal {\em anti-matching\/} $M(y)$ in $H[N_Q(y)]$.
That is, a maximal subset of
pairwise-disjoint non-edges contained in $H[N_Q(y)]$
(a {\em non-edge\/} $(u,v)$ is simply a pair of vertices $u \neq v$ such that $(u,v) \notin E$).
For every pair of non-edges $(u,v),(u',v') \in  M(y)$, there must be at least one additional non-edge
with one endpoint in  $\{u,v\}$ and one endpoint in $\{u',v'\}$. This is true since otherwise,
$\{u,v,u',v'\}$ would span an induced $C_4$ in $H[Q]$, in contradiction to the premise of the lemma $H[Q]$ is induced $C_4$-free.
Therefore, for every $y\in I$, there are at least $\binom{|M(y)|}{2} + | M(y)| \geq | M(y)|^2/2$ non-edges in
$G[N_Q(y)]$.
For every $y \in I$ let $d_2(y)$ denote the number of pairs of distinct vertices in $V(N_Q(y))$ that are non-adjacent. Then the above discussion implies that
every $y \in I$ satisfies
\begin{equation}\label{eq:good_triples}
d_2(y) \geq \frac{| M(y)|^2}{2}\;.
\end{equation}

Let $H'$ be the graph obtained from $H$ by deleting, for every $y \in I$, all edges
$(y,v)$ for $v \in V(M(y))$. Observe that by construction, $H'$ satisfies Item~\ref{it:y-N-CLique}
Also observe that since $|V(M(y)| = 2|M(y)|$,
\begin{equation}\label{eq:diff}
|E(H') \SymDif E(H)| = 2\sum_{y \in I}|M(y)|\;.
\end{equation}

We next show that $H'$ is induced $\calJ$-free. Assume, contrary to the claim, that there exists a subset $U\subset V$ such that $H'[U]$ is an induced instance of $J\in \calJ$.
Since by the premise of the lemma, $H[Q]$ is induced $J$-free and since $H'[Q] = H[Q]$, there must be some
$y \in U \cap I$.
Since the neighborhood of $y$ in $J$ is not a clique and since $H'[I] = H[I]$ is an empty graph, there must be $u,v \in U \cap Q$ for which
$u,v \in N_Q(y)$ and $(u,v) \notin E(H')$. Now, the fact that $u,v$ are connected to $y$ in $H'$ means that neither of them participated in
one of the non-edges of $M(y)$. But then the fact that $(u,v) \notin E(H')$ implies that also $(u,v) \notin E(H)$
(because $H'[Q] = H[Q]$),
which in turn implies that $(u,v)$ could have been added to $M(y)$, contradicting its maximality.
Hence, Item~\ref{it:H'-C4-free} is satisfied as well.


It remains to show that Item~\ref{it:H'-H} holds, i.e., $H'$ is $\beta'$-close to $H$.
Assume 
by the way of contradiction,
that $|E(H') \SymDif E(H)| > \beta' n^2$.
Combining this with Equation~\eqref{eq:good_triples}, Equation~\eqref{eq:diff} and Jensen's inequality gives,
\begin{eqnarray}
\sum_{y \in I}d_2(y) &\geq& \frac{1}{2}\sum_{y \in I}{|M(y)|^2}\\
& \geq&
\frac{1}{2}|Y| \cdot \left( \frac{\sum_{y \in I}{|M(y)|}}{|I|} \right)^2 \\
&=&
\frac{1}{2}|I| \cdot \left( \frac{|E(H') \SymDif E(H)|}{2|I|} \right)^2 \\
&\geq&
\frac{1}{8} \frac{(\beta')^2 n^4}{|I|} \label{eq:d2-lb}
\end{eqnarray}

For a pair of distinct vertices $u,v \in Q$ set $d(u,v)=0$ if $(u,v) \in E(G)$ and otherwise set $d(u,v)$
to be the number of vertices $y \in I$ incident to both $u$ and $v$. Recalling that $I$ is an independent set in $H$, we see that $u,v$
belong to at least ${d(u,v) \choose 2}$ induced copies of $C_4$. Hence,
the number of induced copies of $C_4$ in $H$ is at least
\begin{eqnarray}
	\sum_{u,v \in Q}{\binom{d(u,v)}{2}} &\geq& \binom{|Q|}{2} \cdot \binom{\sum_{u,v \in Q}d(u,v)/\binom{|Q|}{2}}{2}
    \label{eq:Jensen} \\
	&=& \binom{|Q|}{2} \cdot \binom{\sum_{y \in I}d_2(y)/\binom{|Q|}{2}}{2} \label{eq:over}\\
  &\geq& \frac{1}{64} \cdot \frac{(\beta')^4 n^8}{|Q|^2\cdot |I|^2} \label{eq:use-d2-lb} \\
  &\geq& 2\beta n^4 \label{eq:beta'-beta} \;.
\end{eqnarray}
 where Equation~\eqref{eq:Jensen} is due to Jensen's inequality, 
 Equation~\eqref{eq:over} uses $\sum_{u,v \in Q}d(u,v) = \sum_{y \in I}d_2(y)$,
 Equation~\eqref{eq:use-d2-lb} uses Equation~\eqref{eq:d2-lb}, and Equation~\eqref{eq:beta'-beta} follows from the definition of $\beta'$ and the fact that $|Q| + |I| = n$, so that $|Q|\cdot |I| \leq n^2/4$.
As an edge-cover of $2\beta n^4$ instances of $C_4$ is at size
at least $2\leq n^2$, this contradicts the premise that $H$ is $\beta$-close to being $C_4$-free.
\end{proof}

We are now ready to prove Lemma~\ref{lem:everything_for_assaf_and_lior}.
It is proved similarly to Theorem~1 in \cite{GS19}.
\begin{proof}[Proof of Lemma~\ref{lem:everything_for_assaf_and_lior}]
We apply Lemma~\ref{lem:cond_regularity} to $G$ with parameters $\alpha = \beta/4$ and
$\gamma = (\alpha/20)^{4\cdot (4000\alpha^{-6})}\cdot \beta/2$ for $\beta = (\eps/4)^4$.
Since $G$ is $C_4$-free, there is a graph $G^{(1)}$ as stated in Lemma~\ref{lem:cond_regularity}. 
If $|X| < \frac{\eps}{4}n$, then we set $G'$ to be the graph obtained from $G^{(1)}$ by removing all edges incident to
vertices in $X$.
By our assumption, $\Delta(G', G^{(1)}) < \eps/4$.
As $\Delta(G^{(1)}, G) < \eps/4$,
by the triangle inequality $\Delta(G', G) < (\eps/2)$.
Since $Y$ is an independent set in $G^{(1)}$, and hence in $G'$
 we get that $G'$ is the empty graph, and thus fulfills all the conditions of Lemma~\ref{lem:everything_for_assaf_and_lior}.
	
Otherwise $(|X| \geq \frac{\eps}{4}n)$,
let $G^{(2)}$ be the graph obtained from $G^{(1)}$ by doing the following:
for every $1 \leq i < j \leq q$, if $(W_i,W_j)$ is a complete (respectively empty) cut in $G^{(1)}$,  then we turn $(Q_i,Q_j)$ into a complete (respectively empty) cut in $G^{(2)}$. By Item~\ref{it:Wi-Wj} in Lemma~\ref{lem:cond_regularity}, one of these options holds. By Item~\ref{it:sum-Qi-Qj} in Lemma~\ref{lem:cond_regularity},
the number of modifications made is at most $\alpha n^2$.
By Item~\ref{it:G-G'-close} in Lemma~\ref{lem:cond_regularity} we have
\begin{equation}\label{eq:G2-G}
\left| E(G^{(2)}) \SymDif E(G) \right| \leq
	\left| E(G^{(2)}) \SymDif E(G^{(1)}) \right| + \left| E(G^{(1)}) \SymDif E(G) \right| <
	(3\alpha + \gamma)n^2 < \beta n^2\;.
\end{equation}

We claim that 	$G^{(2)}[X \setminus Z]$ is induced $C_4$-free.
Assume, contrary to this claim,
that $G^{(2)}[X \setminus Z]$ contains an induced copy of $C_4$, say on the vertices $v_1,v_2,v_3,v_4$. For $1 \leq s \leq 4$, let $i_{s}$ be such that $v_{s} \in Q_{i_{s}}$.
	It is easy to see that by the definition of $G^{(2)}$, every quadruple
	$(w_1,\dots,w_4) \in W_{i_{1}} \times W_{i_{2}} \times W_{i_{3}} \times W_{i_4}$ spans an induced copy of $C_4$ in the graph $G^{(2)}$. By Item~\ref{it:Wi-Wj} in Lemma~\ref{lem:cond_regularity}, $G^{(2)}$ contains
	$$|W_{i_{1}}| \cdot |W_{i_{2}}| \cdot |W_{i_{3}}| \cdot |W_{i_{4}}| \geq
	(\alpha/20)^{16000\alpha^{-6}}|X|^4 \geq
	(\alpha/20)^{16000\alpha^{-6}} (\varepsilon/4)^4 n^4 = \nolinebreak 2\gamma n^4$$ induced copies of $C_4$.
By Item~\ref{it:G-G'-close} in
Lemma~\ref{lem:cond_regularity}, $G[X \setminus Z]$ and $G'[X \setminus Z]$ differ on less than $\gamma n^2$ edges, each of which can participate in at most $n^2$ induced copies of $C_4$.
This implies that $G$ contains at least $\gamma n^4$ induced copies of $C_4$, contradicting the premise of the current
lemma (that $G$ is $C_4$-free).
	
Since $G$ is induced $C_4$-free and $\Delta(G^{(2)},G) \leq \beta$ by Equation~\eqref{eq:G2-G},
$G^{(2)}$ is $\beta$-close to being induced $C_4$-free. We can thus apply Lemma~\ref{lem:cut_is_close_to_IS_vertices_cliques}
to $H=G^{(2)}$
with $Q = X\setminus Z$
and $I = Y\cup Z$. 
Letting $G^{(3)}$ be the graph $H'$ in the outcome of Lemma~\ref{lem:cut_is_close_to_IS_vertices_cliques},
we have that $G^{(3)}$ is $\beta^{1/4}$-close to $G^{(2)}$, and hence its distance to $G$ is
at most $\beta + \beta^{1/4} \leq (\eps/2)$. Furthermore, $G^{(3)}$
is $C_4$-free, and $E(G^{(2)}[Q])\SymDif E(G^{(3)}[Q]) = \emptyset, E(G^{(2)}[I])\SymDif E(G^{(3)}[I]) = \emptyset$.
Setting $G' = G^{(3)}$,
we get that $G'$ satisfies all requirements of the current lemma.
\end{proof}

\section{Proofs for Section~\ref{sec:chordal}}\label{app:chordal}

In this appendix we prove
Theorem~\ref{thm:chordal-iff-clique-tree}, and Lemmas~\ref{lem:chordal_everything_for_assaf_and_lior} and~\ref{lem:two_clique_partition}.

\subsection{Proof of Theorem~\ref{thm:chordal-iff-clique-tree}} \label{app:subsec:proof_thm+chordal_iff_clque_tree}

The theorem as it is proved in~\cite[Thm. 3.2]{Chordal-Intro} states that a \emph{connected} graph $G$ is chordal if and only if it has a clique tree. The generalization 
to non-connected graphs is also true. We prove it here, as an implication of the theorem 
for connected graphs.

For the first direction,
let $G$ be a chordal graph, and let $\{G_i\}$ be the set of connected components in $G$. 
Since $G$ is chordal, each connected component $G_i$ must be chordal as well.
By~\cite[Thm. 3.2]{Chordal-Intro},  each $G_i$ has a clique tree $T_i$.
Let $\mathcal{T}=\{T_i\}$ 
and let $T_G$ be 
 a tree 
obtained from $\mathcal{T}$ by adding arbitrary edges between the trees in $\mathcal{T}$, so as to connect them to a single tree. As 
every maximal clique in $G$ is a maximal clique in some $G_i$, there is a node in $T_G$ for every maximal clique in $G$. 
To show the $T_G$ is a clique tree of $G$ 
it remains to prove that $C_1\cap C_2 \subset C_3$ for any three maximal cliques $C_1,C_2$ and   $C_3$ such that $C_3$ is on the path between $C_1$ and $C_2$ in $T_G$. If $C_1$ and $C_2$ 
are maximal cliques in the same connected component $G_i$, then the path between them in $T_G$ is in the 
subtree $T_i$ (which is a clique tree of $G_i$) and thus $C_1 \cap C_2 \subset C_3$.
If $C_1$  $C_2$ are maximal cliques in different connected components, then $C_1 \cap C_2 = \emptyset \subset C_3$.

For  the other direction, 
let $G$ be a graph with a clique tree $T_G$. We shall show that $G$ is chordal by proving that each connected component of $G$
has a clique tree, and is hence chordal (implying that $G$ is chordal).
Let $H$ be some connected component in $G$, with maximal cliques $C_1,\dots, C_t$.
Consider the subgraph of $T_G$ induced by $C_1,\dots, C_t$, which we denote by $T_G^H$.
We next show that $T_G^H$ is connected, implying that it is a tree, and hence a clique tree for $H$.

Let $C$ and $C'$ be two maximal cliques in $H$. We show that 
there is a path in $T^H_G$ between them.
Let $v_1\in C$ and $v_\ell \in C'$ be two vertices in $G$.
Since $v_1$ and $v_\ell$ belong to   the same connected component $H$, there is a path between them in $H$.
Let $v_1, v_2, \dots v_\ell$ be such a (simple) path. For each $i \in [\ell-1]$,
let $C_i$ be some maximal clique that 
contains the edge $(v_i,v_{i+1})$,
and denote $C_0=C$ and $C_\ell =C'$.
For any $i\in \{0,\dots,\ell\}$,
the path in $T_G$ between $C_i$ and $C_{i+1}$ must only include cliques that contain $v_{i+1}$, as $v_{i+1} \in C_i \cap C_{i+1}$.
Therefore this path is in $T^H_G$.
By combining all paths in $T_G$ between $C_i$ and $C_{i+1}$ (for $i\in \{0,\dots,\ell\}$),
we get a (non simple) path between $C$ and $C'$ that only contains maximal cliques in $H$.
Hence there is a path between $C$ and $C'$ in  $T^H_G$.
We have thus established that $T^H_G$ is a connected subgraph of $T_G$ (i.e., a subtree), as desired.

\subsection{Proof of Lemma~\ref{lem:chordal_everything_for_assaf_and_lior}}

	As $G$ is chordal, it is also $C_4$-free.
	We apply Lemma~\ref{lem:cond_regularity} to $G$ with parameters $\alpha = \beta/5$ and
	$\gamma = (\alpha/20)^{10/\alpha^3 \cdot (4000\alpha^{-6})}\cdot (\eps'/4)^{10/\alpha^3}/2$ for $\beta = (\eps'/4)^4$.
	Since $G$ is $C_4$-free, there is a graph $G^{(1)}$ as stated in Lemma~\ref{lem:cond_regularity}. 
	As in the  proof of Lemma~\ref{lem:everything_for_assaf_and_lior}, if $|X| < \frac{\eps'}{4}n$, then $G$ is $\eps'$-close to the empty graph, and 
setting $G'$ to be the empty graph completes the proof.
	
	Otherwise ($|X| \geq \frac{\eps'}{4}n$), similarly to the proof of Lemma~\ref{lem:everything_for_assaf_and_lior}, let $G^{(2)}$ be the graph obtained from $G^{(1)}$ by doing the following:
	for every $1 \leq i < j \leq q$, if $(W_i,W_j)$ is a complete (respectively empty) cut in $G^{(1)}$,  then we turn $(Q_i,Q_j)$ into a complete (respectively empty) cut in $G^{(2)}$. By Item~\ref{it:Wi-Wj} in Lemma~\ref{lem:cond_regularity}, one of these options holds. By Item~\ref{it:sum-Qi-Qj} in Lemma~\ref{lem:cond_regularity},
	the number of modifications made is at most $\alpha n^2$.
	By Item~\ref{it:G-G'-close} in Lemma~\ref{lem:cond_regularity} we have
	\begin{equation}\label{eq:chordal-G2-G}
	\left| E(G^{(2)}) \SymDif E(G) \right| \leq
	\left| E(G^{(2)}) \SymDif E(G^{(1)}) \right| + \left| E(G^{(1)}) \SymDif E(G) \right| <
	(3\alpha + \gamma)n^2\;.
	\end{equation}

	
	We claim that 	$G^{(2)}[X \setminus Z]$ is chordal. As we only modified non-homogeneous cuts, every $X_i\setminus Z$ is a clique in $G^{(2)}$,
	so that  $G^{(2)}[X \setminus Z]$ has a $(10/\alpha^3)$-clique-cover, and thus every induced cycle in $G^{(2)}[X \setminus Z]$ is of length at most $10/\alpha^3$.
	Assume contrary to the claim that $G^{(2)}[X \setminus Z]$ has an induced cycle of length $3 < \ell \leq 10/\alpha^3$, whose vertices are denoted $x_1, \dots, x_\ell$, such that $x_i \in W_{a_i}$ where $\{a_i\}_{i=1}^\ell = [\ell]$.
	As the cuts $(W_{a_i}, W_{a_j})$ are homogeneous, and are equal in $G^{(1)}$ and $G^{(2)}$, every subset of $\ell$ vertices $(w_1, \dots w_\ell) \in W_{a_1}  \times \dots  \times W_{a_\ell}$ spans an induced cycle of length $\ell$. By Item~\ref{it:Wi-Wj} in Lemma~\ref{lem:cond_regularity},
	the number of induced cycles of length $\ell$ in $G^{(1)}$ is at least
	$$\Pi_{i\in [\ell]} |W_{a_{i}}| \geq  (\alpha/20)^{\ell \cdot 4000\alpha^{-6}} |X|^\ell \geq (\alpha/20)^{10 \alpha^{-3} \cdot 4000\alpha^{-6}}(\eps'/4)^{10/\alpha^3} n^\ell = 2\gamma n^\ell\;.$$
	By Item~\ref{it:G-G'-close} in
	Lemma~\ref{lem:cond_regularity}, $G[X \setminus Z]$ and $G^{(1)}[X \setminus Z]$  differ on less than $\gamma n^2$ edges, each of which can participate in at most $n^{\ell-2}$ such induced cycles.
	This implies that $G$ contains at least $\gamma n^4$ induced copies of $C_\ell$, contradicting the premise of the current
	lemma (that $G$ is chordal).

	Let $G^{(3)}$ be the graph derived from $G^{(2)}$ by removing all edges that are incident to vertices in $Z$.
By Equation~\eqref{eq:chordal-G2-G} and the bound on the size of $Z$,
$$\Delta(G^{(3)}, G)  \leq \Delta(G^{(3)}, G^{(2)}) + \Delta(G^{(2)}, G) \leq \alpha + (3\alpha + \gamma) < \beta\;.$$
	Since $G$ is chordal and $\Delta(G^{(3)},G) \leq \beta$,
we can  apply Lemma~\ref{lem:cut_is_close_to_IS_vertices_cliques}
	to $H=G^{(3)}$
	with $\calJ$ as the family of cycles of size greater than 3, $Q = X\setminus Z$
	and $I = Y\cup Z$. 
	Letting $G^{(4)}$ be the graph $H'$ in the outcome of  Lemma~\ref{lem:cut_is_close_to_IS_vertices_cliques},
	we have that $G^{(4)}$ is $2\beta^{1/4}$-close to $G^{(3)}$, and hence its distance to $G$ is
	at most $\beta + 2\beta^{1/4} \leq (\eps'/2)$. Furthermore, $G^{(4)}$
	is Chordal, $E(G^{(3)}[Q])\SymDif E(G^{(4)}[Q]) = \emptyset$ and $E(G^{(3)}[I])\SymDif E(G^{(4)}[I]) = \emptyset$.
	Setting $G' = G^{(4)}$,
	we get that $G'$ satisfies all requirements of 
Lemma~\ref{lem:chordal_everything_for_assaf_and_lior}.


\subsection{Proof of Lemma~\ref{lem:two_clique_partition}}

To prove 
Lemma~\ref{lem:two_clique_partition}
we make use of the following definitions. 
\begin{definition}\label{def:M2-free}
We say that a cut $(X,Y)$ in a graph $G = (V,E)$ is {\sf induced $M_2$-free} if there are no four vertices $x,x',y,y'$ such that $x,x' \in X$, $y,y'\in Y$,
$(x,y), (x',y') \in E$ and $(x,y'), (x',y) \notin E$.
\end{definition}
Observe that if $X$ and $Y$ are cliques, then $G[X\cup Y]$ is induced $C_4$-free if and only if $(X,Y)$ is induced $M_2$-free.

\begin{definition}\label{def:N-S-T}
	Let $E'$ be an edge set, $S$ a vertex set and $u$ a vertex. Then
	$N_{(S, E')}(u) = \{v \,|\, v \in S,\, (u,v) \in E', \}$ is the set of neighbors of $u$ in $S$ in the graph $(S'=\{u\}\cup S, E')$, and for a set of vertices $T$,
	$N_{(S, E')}(T) = \{v \,|\, v \in S,\, \forall u\in T \; \; (u,v) \in E'\}$ is the set of vertices in $S$ that neighbor all vertices in  $T$.
	When the edge set $E'$ is clear from context, we simply write $N_S(v)$ or $N_S(T)$.
\end{definition}

The next claim is a slight variant of Lemma 2.2 in~\cite{GS19}, but its proof is exactly the same as the proof of that lemma.
\begin{claim}[{\cite[Lem. 2.2]{GS19}}] \label{clm:two-partition}
	If $(X,Y)$ is induced $M_2$-free, then for every integer $r\geq 1$ there   are partitions
$(X_1,\dots,X_r)$ and $(Y_1,\dots,Y_{r+1})$ of $X$ and $Y$ respectively,
 such that $|X_i| = \frac{|X|}{r}$ for every $i \in [r]$ and the cut $(X_i, Y_j)$ is complete for each $i>j$, and empty for each $i<j$.
\end{claim}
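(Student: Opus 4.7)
The plan is to deduce the desired pair of partitions from a nested structure on the $Y$-neighborhoods of vertices in $X$ that is forced by the induced $M_2$-free hypothesis.

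\textbf{Step 1 (nested neighborhoods).} I would first prove the key structural lemma: if $(X,Y)$ is induced $M_2$-free, then the collection of sets $\{N_Y(x) : x \in X\}$ is linearly ordered by inclusion. The argument is short and contrapositive: if neither $N_Y(x) \subseteq N_Y(x')$ nor $N_Y(x') \subseteq N_Y(x)$, then there exist $y \in N_Y(x) \setminus N_Y(x')$ and $y' \in N_Y(x') \setminus N_Y(x)$; the four vertices $\{x, x', y, y'\}$ then form an induced $M_2$ (edges $(x,y), (x',y')$ and non-edges $(x,y'), (x',y)$), contradicting the hypothesis.

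\textbf{Step 2 (ordering and the $X$-partition).} Using Step 1, enumerate $X = \{x_1, \ldots, x_m\}$ so that $N_Y(x_1) \subseteq N_Y(x_2) \subseteq \cdots \subseteq N_Y(x_m)$, breaking ties arbitrarily. Assuming (as the claim's equality $|X_i| = |X|/r$ tacitly does) that $r \mid m$, partition $X$ into consecutive blocks $X_i = \{x_{(i-1)m/r + 1}, \ldots, x_{im/r}\}$ for $i \in [r]$.

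\textbf{Step 3 (the $Y$-partition).} For each $y \in Y$, define its \emph{type} $j(y) \in \{1,\ldots,m,m+1\}$ to be the smallest index $k$ with $(x_k, y) \in E$, or $m+1$ if $y$ has no neighbor in $X$. The nested structure guarantees $(x_k, y) \in E \iff k \geq j(y)$, so the type captures all adjacencies. Now define
\[
Y_j = \{\, y \in Y : (j-1)\tfrac{m}{r} < j(y) \leq j\tfrac{m}{r}\,\} \quad\text{for } j = 1, \ldots, r,
\]
and let $Y_{r+1} = \{y \in Y : j(y) = m+1\}$. These $r+1$ sets partition $Y$.

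\textbf{Step 4 (verifying the cut conditions).} For $y \in Y_j$ and $i > j$, every index in $X_i$ is at least $(i-1)m/r + 1 > jm/r \geq j(y)$, so $y$ is adjacent to every vertex of $X_i$; hence $(X_i, Y_j)$ is complete. For $y \in Y_j$ and $i < j$, every index in $X_i$ is at most $im/r \leq (j-1)m/r < j(y)$, so $y$ is non-adjacent to every vertex of $X_i$; hence $(X_i, Y_j)$ is empty. For $Y_{r+1}$ with $j(y) = m+1$, no vertex of $X$ is a neighbor, so $(X_i, Y_{r+1})$ is empty for all $i \leq r$, matching the requirement (there is no $X_i$ with $i > r+1$).

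\textbf{Main obstacle.} The only genuinely non-routine step is Step 1 — the translation of the combinatorial $M_2$-free condition into the chain structure on neighborhoods — after which the partitions essentially write themselves. A minor nuisance is the exact divisibility $r \mid m$; if one insists on handling arbitrary $m$ (which the equal-size statement of the claim does not), one would round block sizes to $\lfloor m/r \rfloor$ or $\lceil m/r \rceil$ and adjust the $Y$-partition intervals accordingly, but the logical structure of the argument is unchanged.
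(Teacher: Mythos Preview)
Your proof is correct and follows essentially the same approach as the cited reference: the paper does not give its own proof but defers to \cite[Lem.~2.2]{GS19}, whose argument (via \cite[Claim~2.1]{GS19}) proceeds exactly through your Step~1 nested-neighborhoods characterization, then orders $X$ accordingly and reads off the block partitions. Your handling of the divisibility caveat is also appropriate given how the claim is stated.
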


\medskip
\begin{proof}[Proof of Lemma~\ref{lem:two_clique_partition}]
	Let $X_1, \dots, X_r$ and $Y_1, \dots, Y_{r+1}$ be as defined in Claim~\ref{clm:two-partition} for $X=C_1\setminus C_2$, $Y=C_2\setminus C_1$, and $r = 1/\eps'$.
Let $G'=(V,E')$ be the  graph obtained from $G=(V,E)$ by removing, for each $i \in [r]$, all edges in the cut $(X_i, Y_i)$ if the cut is not 
homogeneous. Note that in $G'$, for each pair $i,j$ the cut $(X_i, Y_j)$ is homogeneous. Also note that the cut $(X, Y)$ is still $M_2$-free in $G'$ (as assuming the contrary implies an induced $C_4$ in $G$, contradicting the premise of the current lemma that $G$ is chordal). In what follows we verify that the conditions in the three items of Definition~\ref{def:simplification} hold.

\smallskip
	Starting with Item~\ref{item:G-Gp-close}, to show that $\Delta(G, G') \leq \eps'$ observe that
$$\sum_{(X_i,Y_i)} |\{(x,y) \; | x\in X_i, y\in Y_i\} | \leq \sum_{(X_i,Y_i)}|X_i|\cdot |Y_i| \leq \frac{|X|}{r} \cdot |Y|  \leq \frac{n^2}{r} = \eps' n^2\;.$$

\smallskip
We now turn to Item~\ref{item:short-path}. In what follows, the underlying graph is $G'$, so that when we say that a cut is homogeneous, we mean in $G'$, and  we use the notation $N_Y(\cdot)$ as a shorthand for $N_{Y,E'}(\cdot)$.
	Let us partition $X$ into equivalence classes 
$H_1,\dots,H_k$ such that all vertices in the same class have the same set of neighbors in $Y$.
By Claim~\ref{clm:two-partition}, for each cut $(X_i, Y_j)$, the cut is complete if $i>j$ and empty if $i<j$.
By the definition of $G'$, if $i=j$, then the cut is either complete or empty.
Hence each $H_t$ is the union of $X_{i_t+1},X_{i_t+1},\dots,X_{i_{t+1}}$ for some $0 \leq i_t < i_{t+1}\leq r$,
and since
all $H_t$ are disjoint, $k \leq r$.
We also denote 
$H_{k+1} = C_1 \cap C_2$. Note that $N_Y(H_{k+1}) = Y$ and $\bigcup_{i=1}^{k+1} H_i = C_1$.

We define a set of cliques $P_1 \dots P_{k+1}$ as follows:
$$P_i = \left( \bigcup_{j\geq i} H_{j}\right) \cup N_Y(H_{i})\;.$$
	Let $\calQ = \{P_1, \dots, P_{k+1}\}$. We proceed to show that $\calQ$ is the set of maximal cliques in $G'[C_1 \cup C_2]$.

	Let $\wtC$ be some clique in $G'[C_1 \cup C_2]$. We show that there exists a clique in $\calQ$ that is a superset of $\wtC$.
As $P_i$ includes all vertices in $X$ that are neighbors of $N_Y(H_i)$, we get that $\wtC \cap X \subset P_i$.
Let $\tilde{x} \in H_i$ be 
a vertex in $\wtQ$ with the largest number of neighbors in $Y$.
	Observe that the set $\wtC\cap Y$ cannot contain a vertex that is not in $N_Y(\tilde{x})$ as it is a clique. By definition of $P_i$, $N_Y(\tilde{x}) = N_Y(P_i)$. Therefore, $\wtC \cap Y \subset P_i$. Also note that for any $i$, $C_1\cap C_2 \subseteq P_i$. As $\wtC \subset (X\cup Y \cup (C_1\cap C_2))$, we have that $\wtC \subseteq P_i$.

	To show that every clique in $\calQ$ is maximal, consider any clique $P_i \in \calQ$ and vertex $v\in C_1\cup C_2$  such that $v \notin P_i$. We show that $v$ has some non-neighbor in $P_i$. By definition of $P_i$, if $v \in X$, then there exists
$i' < i$
such that $v\in H_{i'}$ and thus it cannot be a neighbor of the vertices in
$N_Y(H_i) \setminus N_Y(H_{i'}) \subset P_i$.
If $v \in Y$, as $N_Y(H_i)\subset P_i$, it cannot be a neighbor of all vertices in $P_i \cap H_i$.

	We are now ready to show that the path $P_1, \dots, P_{k+1}$ is a clique tree 
for $G'[C_1 \cup C_2]$. Let $i<j<\ell$. We show that $P_i \cap P_\ell \subset P_j$.
As $P_i \cap P_\ell \cap X = H_\ell \cup \dots \cup H_{k+1}$, and
$P_j \cap X = H_j \cup \dots \cup H_{k+1}$,
we have that
$P_i \cap P_\ell \cap X \subset P_j \cap X$.
As $P_i \cap P_\ell \cap Y = N_Y(H_i)$, and
$P_j \cap Y = N_Y(H_j)$,
we have that
$P_i \cap P_\ell \cap Y \subset P_j \cap Y$.
Also note that all three cliques contain $C_1 \cap C_2$, and that $P_i, P_j, P_\ell \subset (X\cup Y \cup (C_1\cap C_2))$. Therefore, combining the above equations, we get that $P_i \cap P_\ell \subset P_j$, as required.

\smallskip
It remains to verify that the condition in Item~\ref{item:close-clique} of Definition~\ref{def:simplification} holds.	
Namely, we must show that for every clique $C$ in $G[C_1 \cup C_2]$, there exists a maximal clique $C'$ in $G'[C_1 \cap C_2]$ such that $|C\setminus C'| \leq \eps'\cdot n$. For such a clique $C$,
	let $x$ be the vertex in $C \cap X$ with the smallest number of neighbors in $Y$ (with respect to the edge set $E$).
Since $G$ is chordal, and both $X$ and $Y$ are cliques, the cut $(X,Y)$ is $M_2$-free.
	Let $i\in [r]$ be such that $x \in X_i$, and let $j\in [k]$ be the 
largest index such that $P_j$ contains all vertices in 
$\bigcup_{i'>i} X_{i'}$.
	We show that $|C \setminus P_j| \leq |X_{i}| \leq \eps'\cdot n$.
By the definition of $P_j$ (based on the $H_t$s) we have that $|(C \setminus P_j)\cap X| \leq |X_{i}|$. Also, as $C_1\cap C_2 \subset P_j$, we have that $|(C\setminus P_j) \cap (C_1\cap C_2)|=0$.
It remains to show that $|(C \setminus P_j) \cap Y| = 0$.
Let $y \in N_{(Y, E)}(x)$ and let $\ell\in [r+1]$ be such that $y\in Y_\ell$.
	If $i \neq \ell$, then the cut $(X_{i}, Y_\ell)$ is homogeneous in $G$. 
	By Claim~\ref{clm:two-partition}, for all $i' \leq i$ and for all $x' \in X_{i'}$ and $y'\in Y_\ell$,
we have that $y' \in N_{(Y,E)}(x')$. Therefore, the cuts $(X_{i'}, Y_\ell)$ are homogeneous in $G$, and thus every edge in them is also an edge in $G'$.
Therefore, $y \in N_{(Y, E')}(P_j\cap X)$.
	
	If $i = \ell$, then by
	Claim~\ref{clm:two-partition}, for all $i' < i$ and for all $x' \in X_{i'}$, $y \in N_{(Y,E)}(x') $. As $i' \neq i$, the cuts $(X_{i'}, Y_\ell)$ are homogeneous in $G$, and thus every edge in them is also an edge in $G'$. Therefore, $y \in N_{(Y, E')}(P_j\cap X)$.
	
	We have thus established that $N_{(Y, E)}(x) \setminus N_{(Y,E)}(H_j) = \emptyset$, and the proof of the current lemma is completed.
\end{proof}

\end{document}